\definecolor{rouge}{rgb}{0.7,0.00,0.00}
\definecolor{vert}{rgb}{0.00,0.5,0.00}
\definecolor{bleu}{rgb}{0.00,0.00,0.8}
\newtheorem{theorem}{Theorem}[section]
\newtheorem*{theorem*}{Theorem}
\newtheorem{lemma}[theorem]{Lemma}
\newtheorem{proposition}[theorem]{Proposition}
\newtheorem{condition}{Condition}
\newtheorem{conditionA}{A\kern-0.1mm}
\renewcommand\dots{\hbox to 1em{.\hss.\hss.}}
\theoremstyle{definition}
\numberwithin{equation}{section}
\def\geq{\geqslant}
\def\leq{\leqslant}
\DeclareMathOperator{\supp}{supp}
\begin{document}

\title[
Large deviations for products of random matrices]
{
Precise large deviation asymptotics for \\ products of random matrices}

\author{Hui Xiao$^1$}\footnotetext[1]{Universit\'{e} de Bretagne-Sud, LMBA UMR CNRS 6205, Vannes, France.}
\author{Ion Grama$^{1,2}$}\footnotetext[2]{Corresponding author: ion.grama@univ-ubs.fr}
\author{Quansheng Liu$^1$}



\begin{abstract}
Let $(g_{n})_{n\geq 1}$ be a sequence of independent identically distributed $d\times d$ real random matrices
with Lyapunov exponent $\gamma$.
For any starting point $x$ on the unit sphere in $\mathbb R^d$,  
we deal with the norm $ | G_n x | $, 
where $G_{n}:=g_{n} \ldots g_{1}$. 
  The goal of this paper is to establish precise asymptotics for large deviation probabilities
$\mathbb P(\log | G_n x | \geq\nobreak n(q+l))$, 
where $q>\gamma $ is fixed and $l$ is vanishing as $n\to \infty$.
We study  both invertible matrices and positive matrices
and give analogous results for the couple $(X_n^x,\log | G_n x |)$ with target functions,
where $X_n^x= G_n x /| G_n x |$.
As applications we improve previous results on the large deviation principle
for the matrix norm $\|G_n\|$ 
and obtain a precise local limit theorem with large deviations.
\end{abstract}

\date{\today}
\subjclass[2010]{ Primary 60F10, 60B20; Secondary 60J05}
\keywords{Product of random matrices; 
Random walk on the general linear group; Random walk on the semigroup of positive matrices;
spectral gap; 
large deviation; Bahadur-Rao theorem.}
\maketitle


\section{Introduction} 
\subsection{Background and main objectives}
One of the fundamental results in the probability theory is the law of large numbers. 
The large deviation theory describes the rate of convergence in the law of large numbers. 
The most important results in this direction are the Bahadur-Rao and the Petrov precise large deviation asymptotics that we recall below
for independent and identically distributed (i.i.d.)\ real-valued random variables $(X_{i})_{i\geq 1}$. 
Let $S_{n}=\sum_{i=1}^{n}X_{i}.$ 
Denote by $I_{\Lambda}$ the set of real numbers $s\geq 0$ 
such that $\Lambda(s):=\log\mathbb{E}[e^{s X_{1}}]< +\infty$
and by $I_{\Lambda}^\circ$ the interior of $I_{\Lambda}$.
Let $\Lambda^{*}$ be the Frenchel-Legendre transform of $\Lambda$.
Assume that $s\in I_{\Lambda}^\circ$ and $q$ are related by $q=\Lambda'(s)$. 
Set $\sigma^2_s = \Lambda''(s).$
From the results of Bahadur and Rao \cite{BR1960} and Petrov \cite{Petrov65} it follows that
if the law of $X_{1}$ is non-lattice, then the following large deviation asymptotic holds true:
\begin{align}
\label{Petrov theorem 001}
\mathbb{P}(S_{n}\geq n(q+l)) \sim 
\frac{\exp( -n \Lambda^*(q+l))}{s \sigma_s \sqrt{2\pi n}}, 
\ n\to\infty,
\end{align}
where
$
\Lambda^*(q+l)
= \Lambda^{*}(q) + sl + \frac{l^2}{2 \sigma_s^2} + O(l^3) 
$  
and $l$ is a vanishing perturbation as $n\to\infty.$ 
Bahadur and Rao  \cite{BR1960} 
have established the equivalence \eqref{Petrov theorem 001} with $l=0$. 
Petrov 
improved it 
by showing that \eqref{Petrov theorem 001} holds uniformly in
$| l | \leq l_n \to 0$ as $n\to \infty.$
Actually, Petrov's result is also uniform in $q$ 
and  is therefore stronger than Bahadur-Rao's theorem
even with $l=0.$    
The relation \eqref{Petrov theorem 001} with $l=0$ and 
its extension to $|l|\leq l_n\to 0$ have multiple implications 
in various domains of probability and statistics.  
The main goal of the present paper is to establish an equivalence 
similar to \eqref{Petrov theorem 001} 
for products of i.i.d.\  random matrices.

Let $(g_{n})_{n\geq 1}$ be a sequence of i.i.d. $d\times d$ real random matrices
defined on a probability space $(\Omega,\mathcal{F},\mathbb{P})$
with common law $\mu$. 
Denote by $\|\cdot\|$ the operator norm of a matrix and by $| \cdot |$ the Euclidean norm in $\mathbb R^d$.  
Set for brevity 
$G_{n}:= g_{n}\ldots g_{1},$ $n\geq 1$. 
The study of asymptotic behavior of the product $G_n$ attracted much attention, since the fundamental work of
Furstenberg and Kesten \cite{FursKesten1960}, where 
the strong law of large numbers for $\log \|G_n\|$ has been established.
Under additional assumptions, Furstenberg \cite{Furstenberg1963} extended it to $\log |G_n x|$, 
for any starting point $x$ on the unit sphere $\mathbb{S}^{d-1} = \{ x \in \mathbb{R}^d: |x| = 1 \}.$
A number of noteworthy  results in this area can be found in Kesten \cite{Kesten1973}, 
Kingman \cite{Kingman1973},
Le Page \cite{LePage1982}, Guivarc'h and Raugi \cite{GR1985},  Bougerol and Lacroix \cite{Bougerol1985}, 
Goldsheid and Guivarc'h \cite{GG1996}, 
Hennion \cite{Hennion1997},
Furman \cite{Furman2002}, 
Hennion and Herv\'e \cite{Hen-Herve2004},
Guivarc'h \cite{Guivarch2015}, 
Guivarc'h and Le Page \cite{GE2016},
Benoist and Quint \cite{BQ2016, BQ2017} 
to name only a few.

In this paper we are interested in asymptotic behaviour of large deviation probabilities for $\log | G_n x |$
where $x \in \mathbb{S}^{d-1}$.
Set $I_{\mu}=\{s\geq 0: \mathbb{E}(\|g_1\|^{s})<+\infty\}.$  
For $s\in I_\mu$, 
let $ \kappa(s)=\lim_{n\to\infty}\left(\mathbb{E}\| G_n \|^{s}\right)^{\frac{1}{n}}.$ 
Define the convex function $\Lambda(s)=\log\kappa(s)$, $s\in I_\mu$, and consider its Fenchel-Legendre transform
$ 
\Lambda^{\ast}(q)=\sup_{s\in I_{\mu}}\{sq-\Lambda(s)\}, 
$
$q\in\Lambda'(I_{\mu}).$ 
Our first objective is to
establish the following 
Bahadur-Rao type precise large deviation asymptotic: 
\begin{align} 
\label{le page result1}
\mathbb{P}(\log |G_{n} x |\geq nq) \sim
 \bar r_{s}(x) 
\frac{ \exp \left( -n\Lambda^{*}(q)  \right)  }  {s\sigma_s\sqrt{2\pi n}}, \ n\to\infty,
\end{align}
where $\sigma_s>0,$ 
$\bar r_s = \frac{r_s}{ \nu_s( r_s) } >0,$
$r_s$ and $\nu_{s}$ are, respectively, the unique up to a constant
 eigenfunction and unique probability eigenmeasure of the transfer operator
$P_s$ corresponding to the eigenvalue $\kappa(s)$ 
(see Section  \ref{sec:resnorms} for precise statements).
In fact, to enlarge the area of applications in \eqref{le page result1} it is useful to add
a vanishing perturbation for $q$.
In this line we obtain the following Petrov type large deviation expansion: 
under appropriate conditions, uniformly in $| l |\leq l_n \to 0 $ as $n\to\infty,$ 
\begin{align} \label{intro entries 01}
\mathbb{P}(\log | G_n x | \geq n(q+l))
\sim \bar r_{s}(x) 
\frac{ \exp\left(  -n\Lambda^{*}(q+l) \right)} {s\sigma_{s}\sqrt{2\pi n}}, 
\ \ n\to\infty.
\end{align} 
As an consequence of 
\eqref{intro entries 01}  
we are able to infer new results, 
such as large deviation principles for $\log \| G_n \|$, 
see Theorem \ref{Thm-LDP-Norm}. 
From \eqref{intro entries 01} we also deduce a local large deviation asymptotic: 
there exists
a sequence $\Delta_n > 0$ converging to $0$ such that,
uniformly in  $\Delta \in [\Delta_n, o(n)]$, 
\begin{align} \label{local the001}
\mathbb{P}(\log |G_{n} x |\in [nq, nq + \Delta ) ) \sim 
\Delta \frac{\bar r_{s}(x)} 
{ s \sigma_s \sqrt{2\pi n} } e^{-n\Lambda^{*}(q)},\  n\to\infty.
\end{align}

Our results are established for both invertible matrices and positive matrices. 
For invertible matrices, 
Le Page \cite{LePage1982} 
has obtained \eqref{le page result1} 
 for $s>0$ small enough under more restrictive conditions, 
 such as the existence of exponential moments of $\|g_1\|$ and $\|g_1^{-1}\|$. 
The asymptotic \eqref{le page result1} clearly implies a large deviation result due to Buraczewski and Mentemeier 
\cite{BS2016}
which holds for invertible matrices and positive matrices:
for $q=\Lambda'(s)$ and $s\in I_\mu^{\circ}$,  
there exist two constants
$0<c_s<C_s<+\infty$ 
such that
\begin{align} \label{LDbounds001}
c_s\leq \liminf_{n\to\infty}\frac{\mathbb{P}(\log |G_{n} x|\geq nq)}{\frac{1}{\sqrt{n}}~e^{-n\Lambda^{\ast}(q)}}
\leq \limsup_{n\to\infty}\frac{\mathbb{P}(\log |G_{n} x|\geq nq)}{\frac{1}{\sqrt{n}}~e^{-n\Lambda^{\ast}(q)}}
\leq C_s.
\end{align}




Consider the Markov chain $X_{n}^{x}:=G_n x / | G_n x|$.
Our second objective is to 
give precise large deviations for the couple $(X_{n}^{x}, \log | G_n x |)$
with target functions. 
We prove that for any H\"{o}lder continuous target
function $\varphi$ on 
$X_{n}^{x}$, and
any target function $\psi$ on 
$\log |G_{n} x|$
such that $y\mapsto e^{-sy}\psi(y)$ is directly Riemann integrable,
it holds that
\begin{align}  \label{Introdution result2}
&\mathbb{E} \Big[ \varphi(X_{n}^{x}) \psi(\log |G_{n} x| -n(q+l)) \Big] \nonumber \\
&\qquad  \sim 
\bar r_{s}(x)\nu_{s}(\varphi) 
\int_{\mathbb{R}}e^{-sy}\psi(y)dy
\ \frac{ \exp\left(  -n\Lambda^{*}(q+l) \right)} {\sigma_{s}\sqrt{2\pi n}},
\ \ n\to\infty.
\end{align}
As a special case
 of \eqref{Introdution result2} with $l=0$ and $\psi$ compactly supported we obtain 
Theorem 3.3 of Guivarc'h \cite{Guivarch2015}.
With $l=0$, $\psi$ the indicator function of the interval $[0,\infty)$ and $\varphi=r_s$, we get
 the main result in \cite{BS2016}.  

Our third objective is to establish asymptotics for lower large deviation probabilities:
we prove that
for $q=\Lambda'(s)$ with $s<0$ sufficiently close to $0$, 
it holds, 
uniformly in  $|l|\leq l_n$, 
\begin{align} \label{intro s negative001}
\mathbb{P} \big( \log|G_n x|  \leq n(q+l) \big) = 
   \bar r_{s}(x) \frac{ \exp \left( -n \Lambda^*(q+l) \right)  }{ - s \sigma_s \sqrt{ 2 \pi n} } ( 1 + o(1)).
\end{align}
This sharpens the large deviation principle established in \cite[Theorem 6.1]{Bougerol1985} for 
invertible matrices.
Moreover, we extend the large deviation asymptotic \eqref{intro s negative001} 
to the couple $(X_n^x, \log |G_n x|)$ with target functions.



\subsection{Proof outline}
Our proof 
is different from the standard approach of Dembo and Zeitouni \cite{Dembo2009} based on the Edgeworth expansion,
which has been employed for instance in \cite{BS2016}.
In contrast to \cite{BS2016}, 
we start with the identity 
\begin{align} \label{intro001}
\frac{e^{ n \Lambda^{*}(q + l) } }{r_{s}(x)}
& \mathbb{P} \big( \log |G_{n} x| \geq n (q+l) \big) \nonumber \\
 & = e^{ n h_s(l) }  \mathbb{E}_{\mathbb{Q}_{s}^{x}}
\Big( \frac{ \psi_s ( \log |G_{n} x| - n (q+l) ) }{ r_{s}(X_{n}^x) }\Big),
\end{align}
where $\mathbb{Q}_{s}^{x}$ is the change of measure defined in Section \ref{sec:spec gap norm} 
for the norm cocycle $\log |G_n x|$, 
$\psi_s(y)=e^{-sy} \mathbbm{1}_{\{y\geq 0\}}$ and $h_s(l) = \Lambda^{*}(q + l) - \Lambda^{*}(q) -sl$.
Usually the expectation in the right-hand side of \eqref{intro001} is handled via the 
Edgeworth expansion for the distribution function ${\mathbb{Q}_{s}^{x}}\big( \frac { \log |G_n x| - nq}{\sqrt{n} \sigma_s } \leq t \big)$;
however, the presence of the multiplier $r_s(X^x_n)^{-1}$ makes this impossible.
Our idea is to replace the function $\psi_s$ with some 
upper and lower smoothed bounds
using a technique
from 
Grama, Lauvergnat and Le Page \cite{GLE2017}.
For simplicity we deal only with the upper bound 
$\psi_s \leq {\psi}_{s,\varepsilon}^+  * {\rho}_{\varepsilon^2}$,
where $\psi^+_{s,\varepsilon}(y) = \sup_{y': |y'-y| \leq \varepsilon} \psi_s(y')$, for some $\varepsilon >0$,
and $\rho_{\varepsilon^2}$ is a density function on the real line
satisfying the following properties:
the Fourier transform $\widehat{\rho}_{\varepsilon^2}$ is supported on $[-\varepsilon^{-2}, \varepsilon^{-2}]$,
 has a continuous extension in the complex plane and is analytic in the domain 
$\{ z \in \mathbb{C}: |z| < \varepsilon^{-2}, \Im z \neq 0 \}$, see Lemma \ref{LemAnalyExten}.
Let $R_{s,it}$ be the perturbed operator defined by
$R_{s,it}(\varphi)(x)
=\mathbb{E}_{\mathbb{Q}_{s}^{x}} [\varphi(X_1) e^{it ( \log |g_1 x| - q )}],$
for any H\"{o}lder continuous function $\varphi$ on the unit sphere $\mathbb{S}^{d-1}.$
Using the inversion formula we obtain the following upper bound: 
\begin{align} \label{intro002}
& \mathbb{E}_{\mathbb{Q}_{s}^{x}} \Big(\frac{  \psi_s( \log |G_n x| - n (q+l) )  }{r_{s}(X_{n}^x)} \Big) \nonumber \\
&\qquad \qquad\leq  \frac{1}{2\pi} \int_{\mathbb{R}}  e^{-itln}  R^{n}_{s,it}(r_{s}^{-1})(x)
\widehat{\psi}_{s,\varepsilon}^+(t) \widehat{\rho}_{\varepsilon^2} (t)
dt,
\end{align}
where
$R^{n}_{s,it}$ is the $n$-th iteration of $R_{s,it}$. 
The integral in the right-hand side of \eqref{intro002} is decomposed into two parts:
\begin{align}\label{Intro-Integ}
e^{ n h_s(l) } \Big\{ \int_{ |t| < \delta } + \int_{ |t| \geq \delta} \Big\}
e^{-itln}  R^{n}_{s,it}(r_{s}^{-1})(x) \widehat{\psi}_{s,\varepsilon}^+(t) \widehat{\rho}_{\varepsilon^2} (t) dt.
\end{align}  
Since $\widehat{\rho}_{\varepsilon^2}$ is compactly supported on $\mathbb R$ and
$\mu$ is non-arithmetic, the second integral in \eqref{Intro-Integ} decays exponentially fast to $0$. 
To deal with the first integral in \eqref{Intro-Integ}, we make use of 
spectral gap decomposition for the perturbed operator $R_{s,it}$: 
$R^{n}_{s,it} = \lambda^{n}_{s,it} \Pi_{s,it} + N^{n}_{s,it}.$ 
Taking into account the fact that the remainder term $N^{n}_{s,it}$ decays exponentially fast to $0$,
the main difficulty is to investigate the integral:  
\begin{align*}
e^{ n h_s(l) } \int_{ -\delta }^{\delta} e^{-itln}  
\lambda^{n}_{s,it} \Pi_{s,it} (r_{s}^{-1})(x) \widehat{\psi}_{s,\varepsilon}^+(t) \widehat{\rho}_{\varepsilon^2} (t) dt.
\end{align*}
To find the exact asymptotic of this integral, we can apply the saddle point method
(see Fedoryuk \cite{Fedoryuk1987}).
This is possible, since by the analyticity of the functions $\widehat{\psi}_{s,\varepsilon}^+$ and $\widehat{\rho}_{\varepsilon^2}$,
 one can apply Cauchy's integral theorem  to change the integration path so that it passes through the saddle point  
$z_0 = z_0(l)$, which is the unique solution of the saddle point equation $\log \lambda_{s,z} = zl$.

%
%
%

The lower bound of the integral in \eqref{intro001} is a little more delicate, but can be treated in a similar way.
The passage to the targeted version is done by using approximation techniques.

We end this section by fixing some notation, 
which will be used throughout the paper. 
We denote by $c$, $C$, eventually supplied with indices, absolute constants whose values may change from line to line.
By $c_\alpha$, $C_{\alpha}$ we mean constants depending only on the index $\alpha.$
The interior of a set $A$ is denoted by $A^\circ$. 
Let $\mathbb N = \{1,2,\ldots\}$.
For any integrable function $\psi: \mathbb{R} \to \mathbb{C}$, define its Fourier transform by
$\widehat{\psi} (t) = \int_{\mathbb{R}} e^{-ity} \psi(y) dy$, $t \in \mathbb{R}$. 
For a matrix $g$, its transpose is denoted by $g^{\mathrm{T}}.$
For a measure $\nu$ and a function $\varphi$ we write $\nu(\varphi)=\int \varphi d\nu.$

\section{Main results}\label{sec.prelim}
\subsection{Notation and conditions}\label{subsec.notations}
The space $\mathbb{R}^d$ is equipped  with the standard scalar product $\langle \cdot, \cdot\rangle$ 
and the Euclidean norm $|\cdot|$. 
For $d\geq 1$, let $M(d,\mathbb{R})$ be the set of $d\times d$ matrices  with entries in $\mathbb R$
equipped with the operator norm $\|g\|=\sup_{x\in \mathbb{S}^{d-1}}|g x|$, for $g \in M(d,\mathbb{R})$, 
where $\mathbb{S}^{d-1}=\{x\in \mathbb{R}^{d}, |x|=1\}$ is the unit sphere.

We shall work with products of invertible or positive matrices (all over the paper we use the term positive in the wide sense, i.e.\ each entry is non-negative).
Denote by $\mathscr G=GL(d,\mathbb R)$ the general linear group of invertible matrices of $M(d,\mathbb R).$
A positive matrix $g\in M(d,\mathbb R)$ is said to be \emph{allowable},
if every row and every column of $g$ has a strictly positive entry.
Denote by $\mathscr G_+$ the multiplicative semigroup of  allowable positive   matrices of $M(d,\mathbb R)$.
We write $\mathscr G_+^\circ $ for the subsemigroup of $\mathscr G_+$ with strictly positive entries.

Denote by $\mathbb{S}^{d-1}_{+}=\{x\geq 0 : |x|=1\}$ the intersection of the unit sphere
with the positive quadrant.
To unify the exposition, 
we use the symbol $\mathcal{S}$ to denote
$\mathbb{S}^{d-1}$ in the case of invertible matrices, 
and $\mathbb{S}^{d-1}_{+}$ in the case of positive matrices.
The space $\mathcal S$ is equipped with the metric $\mathbf d$ which we proceed to introduce.
For invertible matrices, the distance $\mathbf{d}$ is defined as the angular distance (see \cite{GE2016}), i.e.,
for any $x, y \in \mathbb{S}^{d-1}$, $\mathbf{d}(x,y)= |\sin \theta(x,y)|$, 
where $\theta(x,y)$ is the angle between $x$ and $y$. 
For positive matrices, the distance $\mathbf{d}$ is the Hilbert cross-ratio metric 
(see \cite{Hennion1997}) defined by 
$\mathbf{d}(x,y) = \frac{1- m(x,y)m(y,x)}{1 + m(x,y)m(y,x)}$, where 
$m(x,y)=\sup\{\lambda>0 : \   \lambda y_i\leq x_i,\  \forall i=1,\ldots, d    \}$, 
for any two vectors $x=(x_1, \ldots, x_d)$ and $y=(y_1, \ldots, y_d)$ in  $\mathbb{S}_{+}^{d-1}$.

Let $\mathcal{C}(\mathcal{S})$ be the space of continuous functions on $\mathcal{S}$. 
We write $\mathbf{1}$ for the identity function $1(x)$, $x \in \mathcal{S}$. 
Throughout this paper, let $\gamma>0$ be a fixed small constant.  
For any $\varphi\in \mathcal{C(S)}$, set
\begin{align}
\|\varphi\|_{\infty}:= \sup_{x\in \mathcal{S}}|\varphi(x)| \quad \mbox{and} \quad
\|\varphi\|_{\gamma}:= \|\varphi\|_{\infty}
+ \sup_{x,y\in \mathcal{S}}\frac{|\varphi(x)-\varphi(y)|}{\mathbf{d}(x,y)^{\gamma}}, \nonumber
\end{align}
 and introduce the Banach space
$
\mathcal{B}_{\gamma}:=\{\varphi\in \mathcal{C(S)}: \|\varphi\|_{\gamma}<+\infty\}. 
$


For $g\in M(d,\mathbb R)$ and $x \in \mathcal{S}$, 
write $g \cdot x= \frac{gx}{|gx|}$ for the projective action of $g$ on $\mathcal{S}$.
For any $g \in M(d,\mathbb R)$, set $\iota(g):=\inf_{x\in \mathcal S}|gx|.$
For both invertible matrices and allowable  positive  matrices, it holds that $\iota(g)>0.$
Note that for any invertible matrix $g$, we have $\iota(g) = \| g^{-1} \|^{-1}$.

Let $(g_{n})_{n\geq 1}$ 
be a sequence of i.i.d.\  random matrices of the same probability law $\mu$ on $M(d,\mathbb{R})$.
Set $G_n=g_{n}\ldots g_{1},$ for $n\geq 1.$ 
Our goal is to establish, under suitable conditions, 
 a large deviation equivalence similar to \eqref{Petrov theorem 001}
 for the norm cocycle
$\log |G_n x|$ 
for invertible matrices and positive matrices.
In both cases,
we denote by $\Gamma_{\mu}:=[\supp\mu]$ 
the smallest closed semigroup of $M(d,\mathbb{R})$ generated by $\supp \mu$ (the support of $\mu$),
that is, $\Gamma_{\mu} = \overline{\cup_{n=1}^\infty \{ \supp \mu\}^n}$. 

Set 
$$ 
I_{\mu} 
=\{s\geq 0: \mathbb{E}(\|g_1\|^{s})<+\infty\}.
$$
Applying H\"{o}lder's inequality to $\mathbb{E}(\|g_1\|^{s})$, it is easily seen that $I_{\mu}$ is an interval.
We  make use of the following exponential moment condition:

\begin{conditionA}\label{Aexp}
There exist $s\in I_\mu^\circ$  and $\alpha\in(0,1)$ such that
$\mathbb{E} \| g_1 \|^{s+\alpha} \iota(g_{1})^{-\alpha} < + \infty.$
\end{conditionA}

For invertible matrices, we introduce the following strong irreducibility and proximality conditions,  
where we recall that a matrix $g$ is said to be \emph{proximal} if it has an algebraic simple dominant eigenvalue.
 \begin{conditionA}\label{A1}
{\rm (i)(Strong irreducibility)}
No finite union of proper subspaces of $\mathbb{R}^d$ is $\Gamma_{\mu}$-invariant.

{\rm (ii)(Proximality)} $\Gamma_{\mu}$ contains at least one proximal matrix. 
\end{conditionA}
The conditions of strong irreducibility and proximality are always satisfied for $d=1$. 
If $g$ is proximal, denote by $\lambda_{g}$ its dominant eigenvalue 
and by $v_{g}$ the associated normalized eigenvector ($|v_g|=1$).
In fact, $g$ is proximal iff the space $\mathbb{R}^{d}$ can be decomposed as $\mathbb{R}^{d}=\mathbb{R}\lambda_{g}\oplus V'$ 
such that $gV'\subset V'$ 
and the spectral radius of $g$ on the invariant subspace $V'$
is strictly less than $|\lambda_{g}|$.
For invertible matrices, condition \ref{A1}  
implies that the Markov chain $X_{n}^{x}$ has a unique $\mu$-stationary measure, which is supported on
$$
V(\Gamma_{\mu})=\overline{\{ \pm v_{g}\in 
\mathbb S^{d-1}:  g\in\Gamma_{\mu}, \ g \mbox{ is proximal} \}}.
$$

For positive matrices, introduce the following condition: 
\begin{conditionA}\label{A2}
{\rm (i) (Allowability)}
Every $g\in\Gamma_{\mu}$ is allowable.

{\rm (ii) (Positivity)}
$\Gamma_{\mu}$ contains at least one matrix belonging to $\mathscr G_+^\circ$.
\end{conditionA}
It can be shown (see \cite[Lemma 4.3]{BDGM2014}) that for positive matrices, condition \ref{A2} ensures the existence and uniqueness of the invariant measure for the Markov chain $X_{n}^{x}$ 
supported on
$$
V(\Gamma_{\mu})=\overline{\{v_{g}\in 
\mathbb S^{d-1}_+
: g\in \Gamma_{\mu}, \  g \in \mathscr G_+^{\circ} \}}.
$$
In addition, $V(\Gamma_{\mu})$ is the unique minimal $\Gamma_{\mu}$-invariant subset (see \cite[Lemma 4.2]{BDGM2014}).
According to the Perron-Frobenius theorem, a strictly positive matrix always has a unique dominant eigenvalue,
so condition \ref{A2}(ii) implies condition \ref{A1}(ii) for $d>1$.

For any $s\in I_{\mu}$, for invertible matrices and for  positive matrices, 
the following limit exists (see \cite{GE2016} and \cite{BS2016}): 
\begin{align}
\kappa(s)=\lim_{n\to\infty} \left(\mathbb{E} \| G_n \|^{s}\right)^{\frac{1}{n}}.  \nonumber
\end{align}
The function 
$\Lambda=\log\kappa: I_{\mu} \to \mathbb R$ is convex and 
analytic on $I_{\mu}^{\circ}$
(it plays the same role as the $\log$-Laplace transform of $X_1$ in the real i.i.d.\ case). 
Introduce the
Fenchel-Legendre transform of $\Lambda$ by
$\Lambda^{\ast}(q)=\sup_{s\in I_{\mu}}\{sq-\Lambda(s)\},$
$q\in\Lambda'(I_{\mu}).$
We have that $\Lambda^*(q)=s q - \Lambda(s)$ 
if $q=\Lambda'(s)$ for some $s\in I_{\mu}$, 
which implies $\Lambda^*(q)\geq0$ on $\Lambda'(I_{\mu})$ since $\Lambda(0)=0$ and $\Lambda(s)$ is convex on $I_{\mu}$.

We say that the measure $\mu$ 
is \emph{arithmetic}, if there exist $t>0$, $\beta \in[0,2\pi)$ and a function 
$\vartheta: \mathcal{S}
 \to \mathbb{R}$ 
such that for any $g\in \Gamma_{\mu}$ and any $x\in V(\Gamma_{\mu})$, we have
$
\exp[it\log|gx|-i\beta + i\vartheta(g\!\cdot\!x)-i \vartheta(x)]=1. 
$
For positive matrices, we need the following condition:
\begin{conditionA}\label{A3}
{\rm (Non-arithmeticity)}  The measure $\mu$ is non-arithmetic.
\end{conditionA}
A simple sufficient condition established in \cite{Kesten1973} for  
the measure $\mu$
to be non-arithmetic
is that the additive subgroup of $\mathbb{R}$ generated by the set
$\{ \log \lambda_{g} : g\in \Gamma_{\mu}, \  g \in \mathscr G_+^\circ \}$
is dense in $\mathbb{R}$ (see \cite[Lemma 2.7]{BS2016}). 

Note that for positive matrices, 
condition \ref{A3} is used to ensure that 
$\sigma_s^2=\Lambda''(s) >0$.
For invertible matrices, 
condition \ref{A1} 
implies the non-arithmeticity of the  measure $\mu$, 
hence, $\sigma_s$ is also strictly positive 
(for a proof see Guivarc'h and Urban \cite[Proposition 4.6]{GU2005}).

For any $s\in I_{\mu}$, the transfer operator $P_s$ and the conjugate transfer operator $P_{s}^{*}$ 
are defined, for any $\varphi \in \mathcal{C(S)}$ and $x\in \mathcal S$, by
\begin{align}\label{transfoper001}
\! P_{s}\varphi(x) \!=\! \int_{\Gamma_{\mu}}  \!  |g_1 x |^{s} \varphi( g_1\!\cdot\!x ) \mu(dg_1), \ 
P_{s}^{*}\varphi(x) \!=\! \int_{\Gamma_{\mu}} \!  |g_1^{\mathrm{T}}x|^{s} \varphi(g_1^{\mathrm{T}}\!\cdot\!x) \mu(dg_1),
\end{align}
which are bounded linear on $\mathcal{C(S)}$. 
Under condition \ref{A1} for invertible matrices, 
or condition \ref{A2} for positive matrices, 
the operator $P_s$ 
has a unique probability eigenmeasure $\nu_s$ on $\mathcal{S}$
corresponding to the eigenvalue $\kappa(s)$: 
$P_s \nu_s = \kappa(s)\nu_s.$
Similarly, the operator $P_{s}^{*}$ 
has a unique probability eigenmeasure $\nu^*_s$
corresponding to the eigenvalue $\kappa(s)$: 
$P_{s}^{*} \nu^*_s = \kappa(s)\nu^*_s.$
Set, for $x\in \mathcal{S}$, 
$$
r_{s}(x)= \int_{\mathcal{S}} |\langle x, y\rangle|^{s}\nu^*_{s}(dy), \ \
r_{s}^*(x)= \int_{\mathcal{S}} |\langle x, y\rangle|^{s}\nu_{s}(dy).
$$ 
Then, $r_s$ is the unique, up to a scaling constant, 
strictly positive eigenfunction of $P_s$:
$P_s r_s = \kappa(s)r_s$;
similarly
$r^*_s$ is the unique, up to a scaling constant, 
strictly positive eigenfunction of $P_{s}^{*}$: $P_{s}^{*} r^*_s = \kappa(s)r^*_s$. 
We refer for details to Section \ref{sec:spec gap norm}.

Below we shall also make use of normalized eigenfunction $\bar r_s$ defined by 
$\bar r_s(x)= \frac{r_s(x)}{ \nu_s(r_s) }$, $x \in \mathcal{S}$, which is 
strictly positive and H\"{o}lder continuous on the projective space $\mathcal{S}$, 
see Proposition \ref{transfer operator}. 

\subsection{Large deviations for the norm cocycle}  \label{sec:resnorms}

The following theorem gives the exact asymptotic behavior 
of the large deviation probabilities for the norm cocycle. 
\begin{theorem} \label{main theorem1}  
Assume that $\mu$ satisfies either conditions \ref{Aexp}, \ref{A1} for invertible matrices, 
or conditions \ref{Aexp}, \ref{A2}, \ref{A3} for positive matrices.
Let $q=\Lambda'(s)$, where $s\in I_{\mu}^{\circ}$. 
Then for any positive sequence $(l_n)_{n \geq 1}$ satisfying $\lim_{n\to \infty}l_n = 0$, 
we have, as $n \to \infty$, 
uniformly in $x\in \mathcal{S}$ and $|l|\leq l_n$, 
\begin{align}\label{theorem-main001}
\mathbb{P} \big( \log|G_n x|  \geq n(q+l) \big) = 
   \bar r_{s}(x) \frac{ \exp \left( -n \Lambda^*(q+l) \right)  }{s\sigma_s\sqrt{2\pi n}} ( 1 + o(1)).
\end{align}
In particular, with $l=0$, as $n \to \infty$, uniformly in $x\in \mathcal{S}$, 
\begin{align}\label{devrez001}
\mathbb{P} \big( \log|G_n x|  \geq nq \big) =  
  \bar r_{s}(x) \frac{ \exp \left( -n \Lambda^*(q) \right)  }{ s \sigma_s \sqrt{2 \pi n} } ( 1+ o(1)). 
\end{align}
\end{theorem}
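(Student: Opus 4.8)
The plan is to follow the strategy outlined in the introduction, centered on the identity \eqref{intro001}. First I would fix $q = \Lambda'(s)$ with $s \in I_\mu^\circ$ and, using the change of measure $\mathbb{Q}_s^x$ associated with the norm cocycle (whose existence and spectral-gap properties for the perturbed operator $R_{s,it}$ I would invoke from Section \ref{sec:spec gap norm}), rewrite the tail probability as
\begin{align*}
\frac{e^{n\Lambda^*(q+l)}}{r_s(x)} \mathbb{P}\big(\log|G_n x| \geq n(q+l)\big)
= e^{n h_s(l)} \, \mathbb{E}_{\mathbb{Q}_s^x}\!\Big(\frac{\psi_s(\log|G_n x| - n(q+l))}{r_s(X_n^x)}\Big),
\end{align*}
with $\psi_s(y) = e^{-sy}\mathbbm{1}_{\{y \geq 0\}}$ and $h_s(l) = \Lambda^*(q+l) - \Lambda^*(q) - sl = O(l^2)$. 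Since $\psi_s$ is not integrable at $-\infty$ and not smooth, I would sandwich it between smoothed versions $\psi_{s,\varepsilon}^- * \rho_{\varepsilon^2} \leq \psi_s \leq \psi_{s,\varepsilon}^+ * \rho_{\varepsilon^2}$ using the Grama--Lauvergnat--Le Page mollifier $\rho_{\varepsilon^2}$ with compactly supported, complex-analytic Fourier transform (Lemma \ref{LemAnalyExten}); this is the device that keeps the factor $r_s(X_n^x)^{-1}$ harmless, because after Fourier inversion it only enters through $R_{s,it}^n(r_s^{-1})(x)$.

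Next I would apply the inversion formula to reduce the upper bound to $\frac{1}{2\pi}\int_{\mathbb{R}} e^{-itln} R_{s,it}^n(r_s^{-1})(x)\, \widehat{\psi}_{s,\varepsilon}^+(t)\, \widehat{\rho}_{\varepsilon^2}(t)\, dt$, and split the integral at $|t| = \delta$. On $|t| \geq \delta$, non-arithmeticity (guaranteed by \ref{A1} for invertible matrices and by \ref{A3} for positive matrices) gives $\mathrm{spr}(R_{s,it}) < \lambda_{s,0} = \kappa(s)$ uniformly on the compact support of $\widehat{\rho}_{\varepsilon^2}$, so this piece is $O(\theta^n)$ for some $\theta \in (0,1)$ — negligible against $n^{-1/2}e^{-n\Lambda^*(q+l)}$. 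On $|t| < \delta$ I would insert the spectral decomposition $R_{s,it}^n = \lambda_{s,it}^n \Pi_{s,it} + N_{s,it}^n$; the $N_{s,it}^n$ term is again exponentially small, leaving the principal integral $e^{nh_s(l)}\int_{-\delta}^{\delta} e^{-itln} \lambda_{s,it}^n \Pi_{s,it}(r_s^{-1})(x)\, \widehat{\psi}_{s,\varepsilon}^+(t)\, \widehat{\rho}_{\varepsilon^2}(t)\, dt$.

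The core of the argument is the asymptotic evaluation of this last integral by the saddle-point method. Using analyticity of $t \mapsto \lambda_{s,it}$ (so that $\log\lambda_{s,z}$ is analytic near $0$ with $\frac{d}{dz}\log\lambda_{s,z}|_{z=0} = -i q$... after the recentering by $q$, the relevant expansion is $\log\lambda_{s,z} = \frac{1}{2}\sigma_s^2 z^2 + O(z^3)$) together with the analyticity of $\widehat{\psi}_{s,\varepsilon}^+$ and $\widehat{\rho}_{\varepsilon^2}$, I would shift the contour by Cauchy's theorem to pass through the saddle $z_0 = z_0(l)$ solving $\frac{d}{dz}\log\lambda_{s,z} = l$, i.e. $z_0 \approx l/\sigma_s^2$. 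A Laplace/Gaussian expansion around $z_0$ then produces the factor $\frac{1}{\sigma_s\sqrt{2\pi n}}$, the saddle evaluates $e^{nh_s(l)}\lambda_{s,z_0}^n e^{-z_0 l n}$ to $1 + o(1)$ (this is where the $O(l^3)$ error in the Legendre expansion must be absorbed uniformly in $|l| \leq l_n \to 0$), the term $\widehat{\psi}_{s,\varepsilon}^+(0) \to \int_0^\infty e^{-sy}\,dy = 1/s$ as $\varepsilon \to 0$, $\widehat{\rho}_{\varepsilon^2}(0) = 1$, and the eigenprojection contributes $\Pi_{s,0}(r_s^{-1})(x) = \nu_s(r_s^{-1}\cdot)\,\mathbf{1}$ evaluated appropriately — unwinding the normalizations in Section \ref{sec:resnorms} this collapses to $\bar r_s(x) = r_s(x)/\nu_s(r_s)$ after multiplying back by $r_s(x)$. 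Finally I would let $\varepsilon \to 0$ (slowly, e.g. $\varepsilon = \varepsilon_n \to 0$ with $n\varepsilon_n^2 \to \infty$) so that upper and lower smoothed bounds coincide to leading order, yielding \eqref{theorem-main001}; the lower bound via $\psi_{s,\varepsilon}^-$ is entirely parallel. The main obstacle is making the saddle-point expansion uniform simultaneously in $x \in \mathcal{S}$, in $|l| \leq l_n$, and in the vanishing parameter $\varepsilon_n$: one must control the contour shift, the remainder in the Gaussian approximation, and the $\varepsilon$-dependence of $\widehat{\psi}_{s,\varepsilon}^+$ all at once, keeping every constant independent of $x$ via the uniform (in $x$) spectral-gap estimates for $R_{s,it}$. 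Specializing $l = 0$ gives \eqref{devrez001}.
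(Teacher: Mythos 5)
Your proposal follows essentially the same route as the paper: the change of measure to $\mathbb{Q}_s^x$, the smoothing of $\psi_s(y)=e^{-sy}\mathbbm{1}_{\{y\geq 0\}}$ via the mollifier with compactly supported, analytically extendable Fourier transform, Fourier inversion, the split at $|t|=\delta$ with non-arithmeticity killing the outer range, the spectral decomposition $R^n_{s,it}=\lambda^n_{s,it}\Pi_{s,it}+N^n_{s,it}$, and the contour shift through the saddle point $z_0(l)$ solving $K_s'(z)=l$; the paper packages the entire saddle-point analysis into Proposition \ref{Prop Rn limit1} and then reads off the constants exactly as you describe.

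Two small technical corrections. First, the lower sandwich $\psi_{s,\varepsilon}^-\ast\rho_{\varepsilon^2}\leq\psi_s$ that you assert is false as written: $\rho_{\varepsilon^2}$ cannot be compactly supported (its Fourier transform is), so the convolution smears in mass from outside the $\varepsilon$-ball over which the infimum was taken. The correct lower bound (Lemma \ref{estimate u convo}) carries the correction term $\int_{|y|\geq\varepsilon}\psi_{s,\varepsilon}^-(\cdot-y)\rho_{\varepsilon^2}(y)\,dy$, which must then be shown to vanish as $\varepsilon\to 0$ — this is the term $C_n^-(x,l)$ in the paper's proof, handled by a second application of Proposition \ref{Prop Rn limit1} and dominated convergence. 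Second, you do not need $\varepsilon=\varepsilon_n\to 0$ with $n$, and the uniformity in $\varepsilon$ you flag as the main obstacle never arises: the paper keeps $\varepsilon$ fixed, takes $n\to\infty$ to get $\limsup$ and $\liminf$ bounds differing by explicit factors such as $(1+C_\rho(\varepsilon))(1+2s\varepsilon)$, and only then lets $\varepsilon\to 0$. This two-step order of limits is what makes the argument close cleanly.
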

The rate function $\Lambda^*(q+l)$ admits the following expansion: for $q=\Lambda'(s)$ and $l$ in a small neighborhood
of $0$, we have
\begin{align} \label{Def Jsl}
\Lambda^*(q+l)
   = \Lambda^{*}(q) + sl + \frac{l^2}{2 \sigma_s^2} - \frac{l^3}{\sigma_s^3} \zeta_s\Big(\frac{l}{\sigma_s}\Big), 
\end{align}
where $\zeta_s(t)$ is the 
Cram\'{e}r series,
$\zeta_s(t) = \sum_{k=3}^{\infty} c_{s,k} t^{k-3}= \frac{ \Lambda'''(s) }{6 \sigma_s^3} + O(t),$
with $\Lambda'''(s)$ and $\sigma_s$ defined in Proposition \ref{perturbation thm}.
We refer for details to Lemma \ref{lemmaCR001},
where the coefficients $c_{s,k}$ are given in terms of the cumulant generating function $\Lambda=\log \kappa$. 

For invertible matrices, a point-wise version of \eqref{devrez001},  
without $\sup_{x\in \mathcal{S}}$ and with $l=0$, namely the asymptotic \eqref{le page result1}, 
has been first established by Le Page \cite[Theorem 8]{LePage1982} 
for small enough $s>0$
under a stronger exponential moment condition.
For  positive matrices, the asymptotic \eqref{devrez001} is new and  implies 
the  large deviation bounds \eqref{LDbounds001}
established in 
Buraczewski and Mentemeier  \cite[Corollary 3.2]{BS2016}.
We note that there is a misprint in \cite{BS2016}, where $e^{nsq}$ should be replaced by $e^{\Lambda^*(q)}$.

Now we consider the precise large deviations for the couple $(X_n^x, \log |G_{n} x|)$
with target functions $\varphi$ and $\psi$ 
on  $X_{n}^{x}:=G_n \!\cdot\! x$ and  $\log |G_{n} x|$, respectively.
\begin{theorem}  \label{main theorem3}
Assume the conditions of Theorem \ref{main theorem1} and
let $q=\Lambda'(s)$ for $s\in I_\mu^\circ$.
Then, for any $\varphi \in \mathcal{B}_{\gamma}$, 
 any measurable function $\psi$ on $\mathbb{R}$ 
such that $y\mapsto e^{-sy}\psi(y)$ is 
directly Riemann integrable,
and any positive sequence $(l_n)_{n \geq 1}$ satisfying $\lim_{n\to \infty}l_n = 0$, 
we have, as $n \to \infty$, uniformly in $x \in \mathcal{S}$ and $|l| \leq l_n$, 
\begin{align} \label{Petrov-Target01}
& \mathbb{E} \Big[ \varphi(X_{n}^{x})\psi(  \log|G_n x|-n(q+l)) \Big]  \nonumber\\
&  \qquad\qquad
  = \bar r_{s}(x)   
  \frac{ \exp \left( -n \Lambda^*(q+l) \right) }{\sigma_s \sqrt{2\pi n} }
  \Big[  \nu_s(\varphi) \int_{\mathbb{R}} e^{-sy} \psi(y) dy  +  o(1)  \Big]. 
\end{align}
\end{theorem}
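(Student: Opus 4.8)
The plan is to deduce Theorem~\ref{main theorem3} from the machinery developed for Theorem~\ref{main theorem1}, namely the spectral-gap decomposition of the perturbed operators and the saddle-point analysis, by first treating the model case $\psi = \psi_s$ (where $\psi_s(y) = e^{-sy}\mathbbm 1_{\{y\geq 0\}}$) with a general H\"older target $\varphi$, and then passing to arbitrary directly Riemann integrable weights by a monotone approximation argument. Concretely, I would start from the change-of-measure identity analogous to \eqref{intro001},
\begin{align*}
e^{n\Lambda^*(q+l)} \,\mathbb{E}\big[\varphi(X_n^x)\,\psi(\log|G_n x| - n(q+l))\big]
= r_s(x)\, e^{n h_s(l)}\, \mathbb{E}_{\mathbb{Q}_s^x}\!\Big[\frac{\varphi(X_n^x)}{r_s(X_n^x)}\, e^{-s(\log|G_n x|-n(q+l))}\widetilde\psi\Big],
\end{align*}
writing $\widetilde\psi(y) = e^{sy}\psi(y)$, so that the quantity inside the $\mathbb{Q}_s^x$-expectation is $(\varphi/r_s)(X_n^x)$ times a bounded directly Riemann integrable function of the recentered cocycle. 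The only structural change relative to Theorem~\ref{main theorem1} is that the constant function $r_s^{-1}$ appearing in the operator $R_{s,it}^n(r_s^{-1})$ is replaced by $\varphi r_s^{-1} \in \mathcal{B}_\gamma$; since the perturbed operator $R_{s,it}$ and its spectral decomposition $R_{s,it}^n = \lambda_{s,it}^n \Pi_{s,it} + N_{s,it}^n$ act on the whole Banach space $\mathcal{B}_\gamma$, this substitution goes through verbatim.

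Second, I would run the Fourier-inversion and saddle-point argument exactly as sketched in the proof outline. Using the smoothed upper and lower bounds $\psi^{\pm}_{s,\varepsilon} * \rho_{\varepsilon^2}$ for the (now bounded) weight, the inversion formula gives an integral of the form $\frac{1}{2\pi}\int_{\mathbb{R}} e^{-itln} R_{s,it}^n(\varphi r_s^{-1})(x)\,\widehat{\psi}^{\pm}_{s,\varepsilon}(t)\,\widehat{\rho}_{\varepsilon^2}(t)\,dt$. Splitting at $|t| = \delta$, the tail $|t|\geq\delta$ is exponentially negligible by non-arithmeticity and compact support of $\widehat\rho_{\varepsilon^2}$; on $|t|<\delta$ the remainder $N_{s,it}^n$ contributes a negligible term, and the leading term $e^{nh_s(l)}\int_{-\delta}^\delta e^{-itln}\lambda_{s,it}^n \Pi_{s,it}(\varphi r_s^{-1})(x)\,\widehat{\psi}^{\pm}_{s,\varepsilon}(t)\,\widehat{\rho}_{\varepsilon^2}(t)\,dt$ is evaluated by shifting the contour through the saddle point $z_0(l)$ solving $\log\lambda_{s,z} = zl$. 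The projector satisfies $\Pi_{s,0}(\psi)(x) = r_s(x)\,\nu_s(\psi)$, so $\Pi_{s,0}(\varphi r_s^{-1})(x) = r_s(x)\,\nu_s(\varphi)$; this is the mechanism that replaces the factor $r_s(x)$ of Theorem~\ref{main theorem1} by $r_s(x)\,\nu_s(\varphi)$ and produces the stated constant. Letting $\varepsilon\to 0$ after $n\to\infty$ pinches the upper and lower bounds, yielding \eqref{Petrov-Target01} for $\psi = \psi_s$, with $\int_{\mathbb{R}} e^{-sy}\psi_s(y)\,dy = 1/s$, consistent with Theorem~\ref{main theorem1}.

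Third, for a general weight $\psi$ with $e^{-sy}\psi(y)$ directly Riemann integrable, I would use the standard renewal-theoretic approximation: approximate $y\mapsto e^{-sy}\psi(y)$ from above and below by finite linear combinations of indicators of intervals (equivalently, by functions of the form $y\mapsto e^{-sy}\psi_s(y-a) - e^{-sy}\psi_s(y-b)$ up to exponential factors), for which the result is already known by translation-invariance of the argument and linearity; direct Riemann integrability guarantees that the upper and lower approximants have integrals differing by an arbitrarily small amount, and the corresponding large-deviation asymptotics differ by a term that is $o(1)$ relative to the main term, uniformly in $x$ and $|l|\leq l_n$. Combining the two squeezes (smoothing in the model case, step-function approximation in the general case) gives \eqref{Petrov-Target01}.

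The main obstacle I anticipate is controlling the two approximation schemes \emph{simultaneously and uniformly} in $x\in\mathcal S$ and $|l|\leq l_n$: one must check that the error terms coming from (i) the smoothing parameter $\varepsilon$ in the Fourier-analytic step and (ii) the mesh of the step-function approximation of $e^{-sy}\psi(y)$ can be made $o(1)$ relative to $e^{-n\Lambda^*(q+l)}/\sqrt n$ with constants independent of $x$ and of the perturbation $l$. This requires the uniform (in $x$ and in a neighbourhood of $t=0$) analyticity and non-degeneracy of $\lambda_{s,it}$, the uniform H\"older bounds on $\Pi_{s,it}(\varphi r_s^{-1})$ in terms of $\|\varphi\|_\gamma$, and a uniform exponential bound on $N_{s,it}^n$ — all of which are available from the spectral-gap results quoted earlier (Proposition~\ref{perturbation thm} and the constructions of Section~\ref{sec:spec gap norm}) — together with a careful bookkeeping of how the direct Riemann integrability of $e^{-sy}\psi(y)$ feeds into the size of the approximation gap. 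The rest is a routine, if lengthy, adaptation of the computations already carried out for Theorem~\ref{main theorem1}.
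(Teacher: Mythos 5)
Your proposal follows essentially the same route as the paper: the authors first prove the result for weights $\psi$ whose regularizations $\sup_{|y'-y|\leq\varepsilon}e^{-sy'}\psi(y')$ and $\inf_{|y'-y|\leq\varepsilon}e^{-sy'}\psi(y')$ converge in $L^1$ (their Theorem \ref{main theorem2}), via the change of measure with $\varphi r_s^{-1}$ in place of $r_s^{-1}$, the smoothing bounds $\Psi_{s,\varepsilon}^{\pm}*\rho_{\varepsilon^2}$, Fourier inversion and the saddle-point estimate of Proposition \ref{Prop Rn limit1}, and then pass to general directly Riemann integrable $e^{-s\cdot}\psi$ by sandwiching between step functions exactly as you suggest. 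The one slip is your projector identity $\Pi_{s,0}(\varphi r_s^{-1})(x)=r_s(x)\nu_s(\varphi)$ --- in the paper's normalization $\Pi_{s,0}(\varphi r_s^{-1})=\pi_s(\varphi r_s^{-1})=\nu_s(\varphi)/\nu_s(r_s)$ is constant in $x$ and the factor $r_s(x)$ enters separately through the change of measure --- but since the resulting constant $\bar r_s(x)\nu_s(\varphi)$ is correct, this is harmless bookkeeping.
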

With
$\varphi = \mathbf{1}$ and $\psi(y)=\mathbbm{1}_{\{y\geq0\}}$ for $y\in \mathbb R,$
we obtain Theorem \ref{main theorem1}.
For invertible matrices and with $l=0$,  
Theorem \ref{main theorem3} 
strengthens  the point-wise large deviation result stated 
in Theorem 3.3 of Guivarc'h \cite{Guivarch2015}, since 
 we do not assume the function $\psi$ to be compactly supported and 
our result is uniform in $x\in \mathcal{S}$. 
By the way we would like to remark that 
in Theorem 3.3 of \cite{Guivarch2015} $\kappa^n(s)$ should be replaced by $\kappa^{-n}(s)$,
and $\nu_s(\varphi r_s^{-1})$ should be replaced by $\frac{\nu_s(\varphi)}{\nu_s(r_s)}$.  
For positive matrices, Theorem \ref{main theorem3}  is new. 
Since $r_s$ is a strictly positive and H\"{o}lder continuous function on $\mathcal{S}$ 
(see Proposition \ref{transfer operator}),
taking $\varphi=r_s$ and $\psi(y)=\mathbbm{1}_{\{y\geq0\}}$, $y\in \mathbb R$ in Theorem \ref{main theorem3},  
we get the main result of \cite{BS2016} (Theorem 3.1).

Unlike the case of i.i.d.\ real-valued random variables,
Theorems \ref{main theorem1} and \ref{main theorem3} 
do not imply the similar asymptotic for lower large deviation probabilities $\mathbb{P}( \log|G_n x|  \leq n(q+l))$, where $q <\Lambda'(0)$.
To formulate our results, we need an exponential moment condition, as in 
Le Page \cite{LePage1982}.
For $g \in \Gamma_{\mu}$, set $N(g) = \max\{ \|g\|, \iota(g)^{-1} \}$, which reduces to  $N(g) = \max\{ \|g\|, \|g^{-1}\| \}$
for invertible matrices.
\begin{conditionA}\label{CondiMoment} 
There exists a constant $\eta \in (0,1)$ such that $\mathbb{E} [N(g_1 )^{\eta}] < +\infty$.
\end{conditionA}

Under condition \ref{CondiMoment}, 
the functions $s \mapsto \kappa(s)$ and $s \mapsto \Lambda(s) = \log \kappa(s)$
can be extended analytically in a small neighborhood of $0$ of the complex plane; 
in this case the expansion \eqref{Def Jsl} still holds and we have $\sigma_s = \Lambda''(s)>0$ for $s<0$ small enough. 
We also need to extend the function $\overline r_s$ for small $s<0$, which is positive and H\"older continuous on the projective space $\mathcal S$, 
as in the case of $s>0$:  we refer to Proposition \ref{transfer operator s negative} 
for details. 

\begin{theorem} \label{Thm-Neg-s}  
Assume that $\mu$ satisfies either conditions \ref{A1}, \ref{CondiMoment} for invertible matrices 
or conditions \ref{A2}, \ref{A3}, \ref{CondiMoment} for positive matrices. 
Then, there exists $\eta_0 < \eta$ such that for any $s \in (-\eta_0, 0)$ and $q=\Lambda'(s)$, 
for any positive sequence $(l_n)_{n \geq 1}$ satisfying $\lim_{n\to \infty}l_n = 0$, 
 we have, as $n \to \infty$, 
uniformly in $x\in \mathcal{S}$ and $|l|\leq l_n$, 
\begin{align*}
\mathbb{P} \big( \log|G_n x|  \leq n(q+l) \big) = 
   \bar r_{s}(x) \frac{ \exp \left( -n \Lambda^*(q+l) \right)  }{ - s \sigma_s \sqrt{ 2 \pi n} } ( 1 + o(1)).
\end{align*}
In particular, with $l=0$, as $n \to \infty$, uniformly in $x\in \mathcal{S}$, 
\begin{align*}
\mathbb{P} \big( \log|G_n x|  \leq nq \big) =  
  \bar r_{s}(x) \frac{ \exp \left( -n \Lambda^*(q) \right)  }{ - s \sigma_s \sqrt{2 \pi n} } ( 1+ o(1)). 
\end{align*}
\end{theorem}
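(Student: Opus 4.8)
The plan is to run the argument of the proof of Theorem~\ref{main theorem1} essentially verbatim, the only structural change being that the tilting parameter $s$ is now negative, which turns the event $\{\log|G_n x|\leq n(q+l)\}$ into a central (CLT-scale) event under the tilted measure. First I would collect the analytic-continuation facts: by condition~\ref{CondiMoment}, using the bound $\|g\|^{-1}\leq\iota(g)^{-1}\leq N(g)$, the function $\kappa$ and the spectral data of the transfer operator --- the eigenfunction $r_s$, the eigenmeasure $\nu_s$, the normalized eigenfunction $\bar r_s=r_s/\nu_s(r_s)$, the change of measure $\mathbb{Q}_s^x$, and the perturbed operators $R_{s,it}$ with their dominant eigenvalue $\lambda_{s,it}$, spectral projector $\Pi_{s,it}$ and remainder $N_{s,it}^n$ --- all extend to a real neighborhood of $s=0$; for $s\in(-\eta_0,0)$ with $\eta_0<\eta$ small one has $\sigma_s=\Lambda''(s)>0$ together with the Cram\'er expansion~\eqref{Def Jsl}. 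These are the contents of Proposition~\ref{transfer operator s negative}, Proposition~\ref{perturbation thm}, and the perturbation facts used before Theorem~\ref{main theorem1}, now read for small negative $s$.

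Next I would record the lower-tail analogue of the identity~\eqref{intro001}. With $\psi_s(y)=e^{-sy}\mathbbm{1}_{\{y\leq 0\}}$ and $h_s(l)=\Lambda^*(q+l)-\Lambda^*(q)-sl$, the change of measure $\mathbb{Q}_s^x$ gives, by the same algebra as in the derivation of~\eqref{intro001} (which is insensitive to the sign of $s$),
\begin{align*}
\frac{e^{n\Lambda^*(q+l)}}{r_s(x)}\,\mathbb{P}\big(\log|G_n x|\leq n(q+l)\big)
= e^{nh_s(l)}\,\mathbb{E}_{\mathbb{Q}_s^x}\!\left(\frac{\psi_s(\log|G_n x|-n(q+l))}{r_s(X_n^x)}\right).
\end{align*}
The decisive point is that, because $-s>0$, the function $\psi_s$ is bounded and integrable --- it decays like $e^{(-s)y}$ as $y\to-\infty$ --- with $\int_{\mathbb{R}}\psi_s(y)\,dy=\frac{1}{-s}$; this is the source of the factor $-s$ in the denominator of the claimed asymptotic. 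From here the machinery is identical to that of Theorem~\ref{main theorem1}: sandwich $\psi_s$ between smoothed bounds $\psi_{s,\varepsilon}^{\pm}*\rho_{\varepsilon^2}$ following \cite{GLE2017}, use Fourier inversion to express the expectation as an integral of $R_{s,it}^n(r_s^{-1})(x)$ against $\widehat{\psi}_{s,\varepsilon}^{\pm}(t)\widehat{\rho}_{\varepsilon^2}(t)$, split the integral at $|t|=\delta$, and discard the range $|t|\geq\delta$ using non-arithmeticity (guaranteed by~\ref{A1} for invertible matrices, by~\ref{A3} for positive matrices).

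On the range $|t|<\delta$ I would insert the spectral gap decomposition $R_{s,it}^n=\lambda_{s,it}^n\Pi_{s,it}+N_{s,it}^n$, discard the exponentially small $N_{s,it}^n$-term, and evaluate
\begin{align*}
e^{nh_s(l)}\int_{-\delta}^{\delta}e^{-itln}\,\lambda_{s,it}^n\,\Pi_{s,it}(r_s^{-1})(x)\,\widehat{\psi}_{s,\varepsilon}^{\pm}(t)\,\widehat{\rho}_{\varepsilon^2}(t)\,dt
\end{align*}
by the saddle point method: the analyticity of $\widehat{\psi}_{s,\varepsilon}^{\pm}$ and $\widehat{\rho}_{\varepsilon^2}$ (Lemma~\ref{LemAnalyExten}) permits shifting the contour through the saddle $z_0=z_0(l)$ solving $\log\lambda_{s,z}=zl$, and a Laplace-type expansion there yields $\bar r_s(x)\,\frac{1}{-s}\,\frac{e^{-n\Lambda^*(q+l)}}{\sigma_s\sqrt{2\pi n}}$ up to a factor $1+o(1)$, uniformly in $|l|\leq l_n$ and $x\in\mathcal{S}$; letting $\varepsilon\to0$ removes the smoothing error and makes the upper and lower sandwich bounds coincide.

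The one genuine obstacle is bookkeeping rather than conceptual: one must verify that every uniformity, every analytic dependence, and every contour deformation used in the $s>0$ proof survives the passage to $s<0$ --- in particular that $R_{s,it}$ retains a spectral gap on $\mathcal{B}_\gamma$ uniformly for $s$ near $0$ and $t$ near $0$, that the dominant eigenvalue $\lambda_{s,it}$ is analytic with $\partial_t^2\log\lambda_{s,it}\big|_{t=0}=-\sigma_s^2<0$, and that the normalizations $r_s$, $\nu_s(r_s)$, $\bar r_s$ vary continuously across $s=0$; granting the extension statements cited in the first step, no new phenomenon intervenes. Finally, carrying a H\"older target $\varphi(X_n^x)\in\mathcal{B}_\gamma$ and a weight $\psi$ with $y\mapsto e^{-sy}\psi(y)$ directly Riemann integrable through the same computation --- exactly as in the proof of Theorem~\ref{main theorem3} --- produces the targeted version of the lower large deviation asymptotic announced in the introduction.
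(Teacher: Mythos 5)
Your proposal is correct and follows essentially the same route as the paper: the authors write the change-of-measure identity for the event $\{\log|G_nx|\leq n(q+l)\}$ with $\psi_s(y)=e^{-sy}\mathbbm{1}_{\{y\leq 0\}}$ (now integrable since $-s>0$, giving the $\tfrac{1}{-s}$ factor exactly as you note) and then repeat the smoothing/Fourier-inversion/saddle-point argument of Theorem~\ref{main theorem1}, with Proposition~\ref{Prop Rn limit2} and Propositions~\ref{transfer operator s negative}, \ref{perturbation thm nrgztive s} supplying the spectral data for small negative $s$. Your account fills in the same details the paper omits, so nothing further is needed.
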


For invertible matrices, this result sharpens the large deviation principle established in \cite{Bougerol1985}.
For positive matrices, our result is new, even for the large deviation principle.

More generally, we also have the precise large deviations result for the couple $(X_n^x, \log |G_n x|)$
with target functions. 

\begin{theorem}  \label{Thm-Neg-s-Target}
Assume the conditions of Theorem \ref{Thm-Neg-s}.    
Then, there exists $\eta_0 < \eta$ such that for any $s \in (-\eta_0, 0)$ and $q=\Lambda'(s)$, 
for any $\varphi \in \mathcal{B}_{\gamma}$, 
 any measurable function $\psi$ on $\mathbb{R}$ 
such that $y\mapsto e^{-sy}\psi(y)$ is 
directly Riemann integrable,
and any positive sequence $(l_n)_{n \geq 1}$ satisfying $\lim_{n\to \infty}l_n = 0$, 
we have, as $n \to \infty$, 
uniformly in $x \in \mathcal{S}$ and $|l| \leq l_n$, 
\begin{align*}
& \mathbb{E} \Big[ \varphi(X_{n}^{x})\psi(  \log|G_n x|-n(q+l)) \Big]  \nonumber\\
&  \qquad\qquad\quad 
  = \bar r_{s}(x)  
  \frac{ \exp \left( -n \Lambda^*(q+l) \right) }{\sigma_s \sqrt{2\pi n} }
  \Big[  \nu_s(\varphi) \int_{\mathbb{R}} e^{-sy} \psi(y) dy  +  o(1)  \Big]. 
\end{align*}
\end{theorem}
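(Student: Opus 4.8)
The plan is to reduce the statement to the already-established machinery for $s>0$ (Theorems~\ref{main theorem1} and~\ref{main theorem3}), by exploiting the analytic continuation of the spectral data to a complex neighborhood of $0$ guaranteed by condition~\ref{CondiMoment}. First I would record that, under~\ref{CondiMoment}, the functions $s\mapsto\kappa(s)$ and $s\mapsto\Lambda(s)=\log\kappa(s)$ extend analytically to a disk around $0$, with $\Lambda(0)=0$, $\Lambda'(0)=\gamma$, $\sigma_s^2=\Lambda''(s)>0$ for $s<0$ small, and the Cram\'er expansion~\eqref{Def Jsl} remains valid; simultaneously the eigenfunction $r_s$, eigenmeasure $\nu_s$, and the normalized $\bar r_s$ extend as in Proposition~\ref{transfer operator s negative}, staying strictly positive and H\"older. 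The key point is that for $s<0$ the event $\{\log|G_nx|\le n(q+l)\}$ is a \emph{lower} deviation, so the useful identity is the analogue of~\eqref{intro001} with $\psi_s(y)=e^{-sy}\mathbbm 1_{\{y\le 0\}}$ in place of $e^{-sy}\mathbbm 1_{\{y\ge 0\}}$; since $s<0$, $-s>0$ and $y\mapsto e^{-sy}\psi_s(y)=\mathbbm 1_{\{y\le 0\}}$ is bounded, so $\psi_s$ is still the right weight to make the change of measure $\mathbb Q_s^x$ (defined from $r_s,\nu_s,\kappa(s)$ exactly as in Section~\ref{sec:spec gap norm}) a genuine probability. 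After the tilt one gets
\begin{align*}
\frac{e^{n\Lambda^*(q+l)}}{r_s(x)}\,\mathbb E\big[\varphi(X_n^x)\psi(\log|G_nx|-n(q+l))\big]
= e^{nh_s(l)}\,\mathbb E_{\mathbb Q_s^x}\!\Big(\frac{\varphi(X_n^x)\,e^{-s(\log|G_nx|-n(q+l))}\psi(\cdots)}{r_s(X_n^x)}\Big),
\end{align*}
with $h_s(l)=\Lambda^*(q+l)-\Lambda^*(q)-sl=O(l^2)$, and the problem is again a renewal-type estimate for $\mathbb E_{\mathbb Q_s^x}$ of a directly Riemann integrable functional of the norm cocycle centered at its $\mathbb Q_s^x$-mean $nq$.

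Next I would run exactly the Fourier-analytic scheme of the proof outline: smooth the (now left-supported) weight $e^{-sy}\psi(y)$ between upper and lower bounds ${\psi}^{\pm}_{s,\varepsilon}*\rho_{\varepsilon^2}$ using the analytic mollifier of Lemma~\ref{LemAnalyExten}; insert the inversion formula to express the tilted expectation as $\frac1{2\pi}\int_{\mathbb R} e^{-itln} R_{s,it}^n(r_s^{-1}\varphi)(x)\,\widehat{(e^{-s\cdot}\psi)}^{\pm}_{\varepsilon}(t)\,\widehat{\rho}_{\varepsilon^2}(t)\,dt$, where $R_{s,it}$ is the perturbed operator; split the integral at $|t|<\delta$ versus $|t|\ge\delta$. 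The tail part decays exponentially because $\widehat\rho_{\varepsilon^2}$ is compactly supported and, by~\ref{A1} (invertible) or~\ref{A3} (positive), $\mu$ is non-arithmetic, so $R_{s,it}$ has spectral radius $<1$ on $|t|\ge\delta$ — this spectral-gap input is uniform for $s$ in a small real neighborhood of $0$ by perturbation theory (Proposition~\ref{perturbation thm}). On $|t|<\delta$ use the decomposition $R_{s,it}^n=\lambda_{s,it}^n\Pi_{s,it}+N_{s,it}^n$; the $N^n$ term is exponentially negligible, and for the main term deform the contour through the saddle point $z_0=z_0(l)$ solving $\log\lambda_{s,z}=zl$ (legitimate by analyticity of $\widehat{(e^{-s\cdot}\psi)}^{\pm}_{\varepsilon}$ and $\widehat\rho_{\varepsilon^2}$ and Cauchy's theorem), obtaining by the saddle-point method (Fedoryuk~\cite{Fedoryuk1987}) the Gaussian factor $(\,|s|\,\sigma_s\sqrt{2\pi n}\,)^{-1}$ — note that for $s<0$ the normalizing constant produced is $-s\,\sigma_s\sqrt{2\pi n}$, which is exactly the denominator in the statement. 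The leading constant collects $\Pi_{s,0}(r_s^{-1}\varphi)(x)=r_s(x)^{-1}\nu_s(\varphi)\,r_s^{*}$-type evaluation giving $\bar r_s(x)\,\nu_s(\varphi)$, and the $t=0$ value of the weight's transform is $\int_{\mathbb R} e^{-sy}\psi(y)\,dy$. Finally, let $\varepsilon\to 0$ so that the upper and lower smoothed bounds coincide, using direct Riemann integrability of $e^{-sy}\psi(y)$ to pass the limit through — this is where the standard renewal-theory approximation argument, and the uniformity in $x\in\mathcal S$ and in $|l|\le l_n$, are secured, just as in Theorem~\ref{main theorem3}.

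The routine bookkeeping (the Cram\'er series manipulation, the explicit constants in the saddle-point expansion, the $\varepsilon$-limit for directly Riemann integrable test functions) is identical to the $s>0$ case and I would simply refer back to the proofs of Theorems~\ref{main theorem1} and~\ref{main theorem3}. The genuinely new issue — and the main obstacle — is the \emph{analytic continuation to $s<0$ of the whole spectral package}: one must verify that $\kappa(s)$, the simple dominant eigenvalue $\lambda_{s,it}$, the spectral projector $\Pi_{s,it}$, the eigenfunction $r_s$ and eigenmeasure $\nu_s$ all persist and remain of the right type (strictly positive, H\"older, normalizable) in a two-sided neighborhood of $0$, and that the spectral gap for $R_{s,it}$, $|t|\ge\delta$, survives uniformly there. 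This requires only condition~\ref{CondiMoment} rather than the full exponential moment~\ref{Aexp}, and is precisely the content of Proposition~\ref{transfer operator s negative} together with the perturbation statements; once those are in hand, the saddle point $z_0(l)$ and all the estimates above are insensitive to the sign of $s$, and the proof goes through verbatim.
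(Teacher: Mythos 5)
Your proposal is correct and follows essentially the same route as the paper: the authors prove the theorem by rerunning the argument of Theorem \ref{main theorem2} for $s<0$ (with Proposition \ref{Prop Rn limit2}, which rests on Propositions \ref{transfer operator s negative} and \ref{perturbation thm nrgztive s}, replacing Proposition \ref{Prop Rn limit1}), and then passing from condition \eqref{condition g} to directly Riemann integrable $\psi$ by the same step-function approximation used for Theorem \ref{main theorem3} --- exactly the two-stage reduction you describe. One small bookkeeping slip: for the targeted version the saddle-point method produces the denominator $\sigma_s\sqrt{2\pi n}$ only, and the factor $-1/s$ you attribute to the Gaussian normalization actually comes from $\int_{\mathbb R}e^{-sy}\psi(y)\,dy$ when $\psi=\mathbbm{1}_{\{y\le 0\}}$, which is why it appears in Theorem \ref{Thm-Neg-s} but not in the statement at hand.
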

With $\varphi = \mathbf{1}$ and $\psi(y)=\mathbbm{1}_{\{y \leq 0 \}}$ for $y\in \mathbb R,$
we obtain Theorem \ref{Thm-Neg-s}.

\subsection{Applications to large deviation principle for the matrix norm}
We use Theorems \ref{main theorem1} and \ref{Thm-Neg-s} 
to deduce large deviation principles for the matrix norm $\|G_n\|$. 
Our first result concerns the upper tail and the second one deals with lower tail. 

\begin{theorem}\label{Thm-LDP-Norm}
Assume the conditions of Theorem \ref{main theorem1}. 
Let $q=\Lambda'(s)$, where $s\in I_\mu^\circ$.
Then, 
for any positive sequence $(l_n)_{n \geq 1}$ with $l_n \to 0$ as $n \to \infty$,
we have, uniformly in $|l|\leq l_n$,
\begin{align*}
\lim_{n\to \infty} 
\frac{1}{n} 
\log \mathbb{P} \big(\log \| G_n \| \geq n(q+l) \big) = -\Lambda^*(q). 
\end{align*}
\end{theorem}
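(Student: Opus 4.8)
The plan is to reduce the large deviation principle for the matrix norm $\|G_n\|$ to the already established precise asymptotics for the norm cocycle $\log|G_n x|$ in Theorem \ref{main theorem1}. The elementary but crucial observation is the sandwich
\begin{align*}
\sup_{x \in \mathcal{S}} \log|G_n x| = \log \|G_n\| \geq \log|G_n e| \geq \log|G_n x| - \text{(error)},
\end{align*}
valid for a suitably chosen reference point; more precisely, for the lower bound one simply picks any fixed $x_0 \in \mathcal S$ (for positive matrices, inside the support of the stationary measure, where $\bar r_s(x_0)>0$) and uses $\|G_n\| \geq |G_n x_0|$, hence $\mathbb{P}(\log\|G_n\| \geq n(q+l)) \geq \mathbb{P}(\log|G_n x_0| \geq n(q+l))$. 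Applying Theorem \ref{main theorem1} to $x_0$ gives
\begin{align*}
\liminf_{n\to\infty} \frac{1}{n}\log \mathbb{P}(\log\|G_n\| \geq n(q+l)) \geq -\Lambda^*(q+l),
\end{align*}
uniformly in $|l|\leq l_n$, and since $\Lambda^*(q+l) = \Lambda^*(q) + sl + O(l^2) = \Lambda^*(q) + o(1)$ by the expansion \eqref{Def Jsl}, this matches $-\Lambda^*(q)$ in the limit.

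For the upper bound I would cover the sphere $\mathcal S$ by finitely many balls and exploit Hölder continuity of the action, or, more simply, use a crude norm comparison: $\|G_n\| \leq d \max_{i,j} |(G_n)_{ij}|$ and $|(G_n)_{ij}| \leq |G_n e_j|$, so $\log\|G_n\| \leq \log d + \max_{1\le j\le d}\log|G_n e_j|$ (for positive matrices the $e_j$ lie in $\mathbb S^{d-1}_+$; for invertible matrices one works on $\mathbb S^{d-1}$ directly, possibly after normalizing $e_j$). A union bound then yields
\begin{align*}
\mathbb{P}(\log\|G_n\| \geq n(q+l)) \leq \sum_{j=1}^d \mathbb{P}\big(\log|G_n e_j| \geq n(q+l) - \log d\big),
\end{align*}
and each term is controlled by Theorem \ref{main theorem1} applied with the perturbation $l' = l - \tfrac{\log d}{n}$, which still satisfies $|l'| \le l_n'$ for a sequence $l_n' \to 0$. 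This gives
\begin{align*}
\limsup_{n\to\infty} \frac{1}{n}\log \mathbb{P}(\log\|G_n\| \geq n(q+l)) \leq -\Lambda^*(q),
\end{align*}
using again $\Lambda^*(q+l') = \Lambda^*(q) + o(1)$ and the fact that $\sup_{x}\bar r_s(x) < \infty$ by the Hölder continuity and positivity asserted in Proposition \ref{transfer operator}. Combining the two bounds and noting the estimates are uniform in $|l|\leq l_n$ (the $o(1)$ terms from Theorem \ref{main theorem1} and from the expansion of $\Lambda^*$ are uniform there) finishes the proof.

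The main obstacle, such as it is, is a minor one: in the positive-matrix case one must make sure the reference point used for the lower bound is chosen so that $\bar r_s(x_0)$ is strictly positive (which holds on all of $\mathbb{S}^{d-1}_+$ by Proposition \ref{transfer operator}, since $r_s$ is strictly positive there), and that the crude comparison $\log\|G_n\| \leq \log d + \max_j \log|G_n e_j|$ does not accidentally push the event outside the range where the asymptotic is valid — but since we only track exponential rates and $\log d / n \to 0$, this is harmless. I do not expect any genuine difficulty; the content is entirely in Theorem \ref{main theorem1}, and this theorem is a soft corollary extracting only the logarithmic asymptotics, where the polynomial prefactor $\frac{\bar r_s(x)}{s\sigma_s\sqrt{2\pi n}}$ and the linear-in-$l$ correction $sl$ both disappear under $\frac1n\log(\cdot)$ and the limit $l\to 0$.
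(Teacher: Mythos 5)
Your proposal is correct and follows essentially the same route as the paper: the lower bound via $\log\|G_n\|\geq\log|G_nx|$ together with the strict positivity and boundedness of $\bar r_s$, and the upper bound via the norm comparison $\log\|G_n\|\leq \max_{1\leq j\leq d}\log|G_ne_j|+C$, a union bound, and an application of Theorem \ref{main theorem1} with the shifted perturbation $l-C/n$, controlled through the expansion of $\Lambda^*$ in Lemma \ref{lemmaCR001}. No substantive differences.
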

For invertible matrices, with $l=0$, Theorem \ref{Thm-LDP-Norm} improves 
the large deviation bounds  in Benoist and Quint \cite[Theorem 14.19]{BQ2017},
where the authors consider general groups, but without giving the rate function. 
For positive matrices, the result is new for $l=0$ and $l=O(l_n)$. 

\begin{theorem}\label{Thm-LDP-Norm-Negs}
Assume the conditions of Theorem \ref{Thm-Neg-s}. 
Then, there exists $\eta_0 < \eta$ such that for any $s \in (-\eta_0, 0)$ and $q=\Lambda'(s)$, 
for any positive sequence $(l_n)_{n \geq 1}$ with $l_n \to 0$ as $n \to \infty$, 
 we have, uniformly in  $|l|\leq l_n$, 
\begin{align*}
\lim_{n\to \infty} 
\frac{1}{n} 
\log \mathbb{P} \big(\log \| G_n \| \leq n(q+l) \big) = -\Lambda^*(q). 
\end{align*}
\end{theorem}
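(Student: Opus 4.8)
The plan is to deduce the large deviation principle for $\log\|G_n\|$ from the precise asymptotics for the norm cocycle $\log|G_n x|$ already obtained in Theorem \ref{Thm-Neg-s}, exactly paralleling the way Theorem \ref{Thm-LDP-Norm} follows from Theorem \ref{main theorem1}, but now in the lower-tail regime $s<0$. The key elementary observation is the two-sided comparison
\begin{align*}
\iota(G_n) \le |G_n x| \le \|G_n\|, \qquad x \in \mathcal{S},
\end{align*}
so that, for any fixed $x\in\mathcal S$, the event $\{\log|G_n x|\le n(q+l)\}$ contains $\{\log\|G_n\|\le n(q+l)\}$; this gives immediately the \emph{lower} bound
\begin{align*}
\liminf_{n\to\infty}\frac1n\log\mathbb P\big(\log\|G_n\|\le n(q+l)\big)\ge \liminf_{n\to\infty}\frac1n\log\mathbb P\big(\log|G_n x|\le n(q+l)\big)=-\Lambda^*(q),
\end{align*}
uniformly in $|l|\le l_n$, where the last equality comes from Theorem \ref{Thm-Neg-s} (taking logarithms of the stated equivalent, and noting that $\Lambda^*(q+l)\to\Lambda^*(q)$ by the expansion \eqref{Def Jsl} since $l_n\to0$).

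For the matching \emph{upper} bound I would bound $\|G_n\|$ from below by a finite family of cocycles. Since for any matrix $g$ one has $\|g\|^2 = \|g^{\mathrm T}g\|$, and more simply $\|G_n\| \ge |G_n e_i|$ for each standard basis direction (or, to stay on $\mathcal S$, for a finite $\varepsilon$-net $\{x_1,\dots,x_m\}\subset\mathcal S$ one has $\|G_n\|\le C_\varepsilon \max_{j}|G_n x_j|$ by Hölder continuity of $x\mapsto |G_n x|/\|G_n\|$ combined with the lower bound $|G_n x|\ge \iota(G_n)$ — here condition \ref{CondiMoment} controls $\iota(G_n)$). Consequently
\begin{align*}
\mathbb P\big(\log\|G_n\|\le n(q+l)\big) \le \mathbb P\Big(\max_{1\le j\le m}\log|G_n x_j| \le n(q+l) + \log C_\varepsilon\Big) \le \min_{1\le j\le m}\mathbb P\big(\log|G_n x_j|\le n(q+l)+\log C_\varepsilon\big),
\end{align*}
and applying Theorem \ref{Thm-Neg-s} to a single $x_j$ (with the harmless additive shift $\log C_\varepsilon$ absorbed into an $l$-perturbation of order $1/n\to0$) yields
\begin{align*}
\limsup_{n\to\infty}\frac1n\log\mathbb P\big(\log\|G_n\|\le n(q+l)\big)\le -\Lambda^*(q),
\end{align*}
uniformly in $|l|\le l_n$. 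Combining the two bounds gives the claimed limit.

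The main obstacle, as in the proof of Theorem \ref{Thm-LDP-Norm}, is establishing a clean two-sided comparison between $\|G_n\|$ and the cocycles $|G_n x_j|$ that is robust under the perturbation $l$ and does not destroy the rate: specifically, controlling the constant $C_\varepsilon$ (equivalently, controlling $\iota(G_n)^{-1}=\|G_n^{-1}\|$ in the invertible case) so that the error it introduces is sub-exponential. For the upper bound this is exactly where condition \ref{CondiMoment} enters — a Markov/Chebyshev estimate on $\mathbb E[\iota(G_n)^{-\eta}]\le (\mathbb E[N(g_1)^\eta])^n$ shows $\iota(G_n)^{-1}\le e^{\varepsilon n}$ outside an exponentially negligible set, for every $\varepsilon>0$ — but one must check this negligible set does not interfere with the lower-tail large deviation scale $e^{-n\Lambda^*(q)}$; since $\Lambda^*(q)$ is finite and we may send $\varepsilon\to0$ after taking $\limsup$, this goes through. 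I expect no difficulty in the positive-matrix case beyond replacing $\|G_n^{-1}\|$ by $\iota(G_n)^{-1}$ throughout and invoking condition \ref{A3} where non-arithmeticity was needed in Theorem \ref{Thm-Neg-s}.
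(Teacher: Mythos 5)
Your argument, as written, proves only half of the statement. The inclusion you invoke for the ``lower bound'' --- namely $\{\log\|G_n\|\le n(q+l)\}\subseteq\{\log|G_nx|\le n(q+l)\}$, which is correct since $|G_nx|\le\|G_n\|$ --- yields $\mathbb{P}(\log\|G_n\|\le n(q+l))\le\mathbb{P}(\log|G_nx|\le n(q+l))$, i.e.\ an \emph{upper} bound on the probability, hence only $\limsup_n\frac1n\log\mathbb{P}(\log\|G_n\|\le n(q+l))\le-\Lambda^*(q)$; your displayed $\liminf$ inequality points the wrong way. Your subsequent ``upper bound'' paragraph (the $\varepsilon$-net step, which in the end only uses $|G_nx_j|\le\|G_n\|$ for a single $j$) establishes exactly the same $\limsup$ estimate a second time. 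So the $\liminf$ half of the theorem is never proved, and in the lower-tail regime that is precisely the non-trivial half.

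To bound $\mathbb{P}(\log\|G_n\|\le n(q+l))$ from \emph{below} one must use the comparison in the other direction, $\log\|G_n\|\le\max_{1\le i\le d}\log|G_ne_i|+C$ (as in the proof of Theorem \ref{Thm-LDP-Norm}), which gives
\begin{align*}
\mathbb{P}\big(\log\|G_n\|\le n(q+l)\big)\ \ge\ \mathbb{P}\Big(\bigcap_{i=1}^d\big\{\log|G_ne_i|\le n(q+l)-C\big\}\Big),
\end{align*}
and, unlike the upper-tail case, this is an \emph{intersection} of $d$ rare events rather than a union: a union bound is of no use, and $\mathbb{P}(\cap_iA_i)\ge1-\sum_i\mathbb{P}(A_i^c)$ is vacuous since each $\mathbb{P}(A_i^c)$ is close to $1$. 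One must show the events essentially coincide, e.g.\ via $\mathbb{P}(\cap_iA_i)\ge\mathbb{P}(A_1)-\sum_{i\ge2}\mathbb{P}(A_1\cap A_i^c)$ together with the estimate that each cross term is $o\big(n^{-1/2}e^{-n\Lambda^*(q)}\big)$; this amounts to controlling $\log|G_ne_i|-\log|G_ne_1|$ (equivalently $\log\|G_n\|+\log\iota(G_n)^{-1}$) \emph{on the rare event} $A_1$, for instance through the change of measure \eqref{basic equ1} and tightness of this difference under $\mathbb{Q}_s^{e_1}$. Your appeal to condition \ref{CondiMoment} does not supply this: Markov's inequality gives $\mathbb{P}(\iota(G_n)^{-1}>e^{\varepsilon n})\le e^{-n(\varepsilon\eta-\log\mathbb{E}[N(g_1)^{\eta}])}$, which decays only for $\varepsilon$ above a fixed threshold (not for every $\varepsilon>0$ as you claim), and even where it does decay, the exceptional set has a fixed exponential rate that cannot simply be discarded against an event of probability $e^{-n\Lambda^*(q)}$ when $\Lambda^*(q)$ exceeds that rate.
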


This result is new for both invertible matrices and positive matrices.

\subsection{Local limit theorems with large deviations}  \label{Applic to LocalLD}
Local limit theorems and 
large and moderate deviations
for sums of i.i.d.\ random variables have been studied by
Gnedenko \cite{gnedenko1948},
Sheep \cite{Sheep1964},
Stone \cite{Stone1965},
Breuillard \cite{Bre2005},
Borovkov and Borovkov \cite{Borovkov2008}.
Moderate deviation results in the local limit theorem for products of invertible random matrices 
have been obtained in \cite[Theorems 17.9 and 17.10]{BQ2017}. 

Taking $\varphi = \mathbf{1}$ and $\psi = \mathbbm{1}_{[a,a+\Delta]},$ where $a \in \mathbb{R}$ and $\Delta >0$
do not depend on $n$, 
it is easy to understand that Theorem \ref{main theorem3}
becomes, in fact, a statement on large deviations in the local limit theorem. 
It turns out that with the Petrov type extension \eqref{Petrov-Target01}
we can derive  the following more general statement where $\Delta$ can increase with $n.$

\begin{theorem}\label{Theorem local LD001}
Assume conditions of Theorem \ref{main theorem1} and
let $q=\Lambda'(s)$. 
Then there exists a sequence $\Delta_n > 0$ 
converging to $0$ 
as $n\to\infty$ 
such that, for any $\varphi \in \mathcal{B}_{\gamma}$, 
for any positive sequence $(l_n)_{n \geq 1}$ with $l_n \to 0$ as $n \to \infty$ and any fixed $a\in \mathbb{R}$,
we have, as $n\to\infty,$
 uniformly in  $\Delta \in [\Delta_n, o(n)]$, $x \in \mathcal{S}$ and 
  $|l| \leq l_n$,  
\begin{align*}
& \mathbb{E} \Big[ \varphi(X_n^x) \mathbbm{1}_{ \{ \log |G_{n} x | \in n(q+l) + [ a, a + \Delta) \} }  \Big]  
   \nonumber\\
& \qquad\qquad  
    = \bar r_{s}(x) e^{-sa} \big( 1 - e^{ -s\Delta } \big) 
\frac{ \exp ( - n \Lambda^{*}(q+l) ) }{ s \sigma_s \sqrt{2\pi n}  } 
   \Big[ \nu_s(\varphi) +  o(1) \Big]. 
\end{align*}
Taking $\varphi = \mathbf{1}$, as $n\to\infty,$
 uniformly in  $\Delta \in [\Delta_n, o(n)]$, $x \in \mathcal{S}$ and $|l| \leq l_n$, 
\begin{align*}
& \mathbb{P} \big(\log |G_{n} x | \in n(q+l) +[a,a+\Delta) \big) \\
& \qquad\qquad   =    \bar r_{s}(x) e^{-sa} \big( 1 - e^{ -s\Delta } \big) 
\frac{ \exp ( - n \Lambda^{*}(q+l) ) }{ s \sigma_s \sqrt{2\pi n}  } \Big[ 1 + o(1) \Big]. 
\end{align*}
\end{theorem}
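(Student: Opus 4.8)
The plan is to deduce Theorem \ref{Theorem local LD001} from the Petrov-type targeted expansion in Theorem \ref{main theorem3}, by approximating the indicator $\mathbbm{1}_{[a,a+\Delta)}$ by directly Riemann integrable functions, but tracking carefully how the constants depend on $\Delta$ so that $\Delta$ is allowed to grow with $n$ up to $o(n)$. The point is that for \emph{fixed} $\Delta$ one obtains the statement immediately from Theorem \ref{main theorem3} with $\psi = \mathbbm{1}_{[a,a+\Delta)}$, since $y \mapsto e^{-sy}\mathbbm{1}_{[a,a+\Delta)}(y)$ is directly Riemann integrable and $\int_{\mathbb R} e^{-sy}\mathbbm{1}_{[a,a+\Delta)}(y)\,dy = s^{-1}e^{-sa}(1-e^{-s\Delta})$; the work is to make this uniform in a growing range of $\Delta$.

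First I would fix $\varepsilon>0$ and introduce continuous upper and lower envelopes $\psi_\varepsilon^+ \geq \mathbbm{1}_{[a,a+\Delta)} \geq \psi_\varepsilon^-$, piecewise linear, agreeing with the indicator outside the $\varepsilon$-neighbourhoods of the endpoints $a$ and $a+\Delta$, so that $e^{-sy}\psi_\varepsilon^\pm(y)$ is directly Riemann integrable and $\int_{\mathbb R} e^{-sy}(\psi_\varepsilon^+(y) - \psi_\varepsilon^-(y))\,dy \leq C_s \varepsilon$, a bound that is uniform in $\Delta$ and in $a$ ranging in a fixed set. Applying Theorem \ref{main theorem3} to $\psi_\varepsilon^+$ and $\psi_\varepsilon^-$ separately gives, for $\varphi \in \mathcal{B}_\gamma$ with $\varphi \geq 0$ (the general case follows by splitting $\varphi$ into positive and negative parts, or by adding a large constant),
\begin{align*}
\mathbb{E} \Big[ \varphi(X_n^x) \mathbbm{1}_{ \{ \log |G_{n} x | \in n(q+l) + [ a, a + \Delta) \} }  \Big]
= \bar r_{s}(x) \frac{ e^{ - n \Lambda^{*}(q+l) } }{ \sigma_s \sqrt{2\pi n}  }
   \Big[ \nu_s(\varphi)\, \tfrac{1}{s} e^{-sa}(1 - e^{-s\Delta}) + E_n \Big],
\end{align*}
where the error $E_n$ is bounded by $C_s\|\varphi\|_\gamma\,\varepsilon$ plus the $o(1)$ errors coming from the two applications of Theorem \ref{main theorem3}. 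The subtlety is that the $o(1)$ in Theorem \ref{main theorem3} depends on the function $\psi$ (through its Riemann sums), hence on $\Delta$; I would examine the proof of Theorem \ref{main theorem3} to extract that the convergence is in fact uniform over the family $\{\psi_\varepsilon^\pm : \Delta \leq \Delta_n\}$ for a suitable slowly growing $\Delta_n \to 0$, and separately that for $\Delta$ large (up to $o(n)$) the main term $s^{-1}e^{-sa}(1-e^{-s\Delta}) \to s^{-1}e^{-sa}$ is bounded below away from $0$, so that an additive $o(1)$ translates into a multiplicative $1+o(1)$. Choosing $\varepsilon = \varepsilon_n \to 0$ slowly and then $\Delta_n \to 0$ even more slowly reconciles both regimes.

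The main obstacle is precisely this uniformity bookkeeping: one must verify that the error term in Theorem \ref{main theorem3}, which a priori is an unquantified $o(1)$ depending on $\psi$, can be made uniform over the relevant class of envelope functions and, crucially, that it does not deteriorate as $\Delta$ grows. I expect this to require revisiting the inversion-formula / saddle-point estimate underlying Theorem \ref{main theorem3}: the contribution of $\psi$ enters through $\widehat{\psi}(t)$ and through $\int e^{-sy}\psi(y)\,dy$, and the relevant smoothing parameter can be chosen independently of $\Delta$, while the truncation error $C_s\varepsilon\|\varphi\|_\gamma$ is handled by letting $\varepsilon \to 0$. Once uniformity is established, the two-sided sandwich between the $\psi_\varepsilon^\pm$ bounds closes the argument, and specializing to $\varphi = \mathbf 1$ (so $\nu_s(\mathbf 1) = 1$) gives the probability statement. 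The fact that $1 - e^{-s\Delta}$ stays bounded away from $0$ for $\Delta \geq \Delta_n$ (since $s \neq 0$ and $\Delta_n$ can be taken with $s\Delta_n$ bounded below, or rather $\Delta_n \to 0$ but the main term is $\asymp \Delta_n$ and the error is $o(\Delta_n)$ after re-examination) is what forces the lower endpoint $\Delta_n$ of the admissible range.
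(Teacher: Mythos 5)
Your overall strategy --- feeding $\psi=\mathbbm{1}_{[a,a+\Delta)}$ (suitably smoothed) directly into Theorem \ref{main theorem3} and then arguing that the $o(1)$ there can be made uniform over the resulting $\Delta$-indexed family of target functions --- is not the paper's route, and as written it has a genuine gap: the uniformity of the error over that family is the entire difficulty, and you explicitly defer it (``I would examine the proof of Theorem \ref{main theorem3} to extract that the convergence is in fact uniform\dots''). Theorem \ref{main theorem3} is stated for a single fixed $\psi$; its error depends on $\psi$ through the sup norm, the modulus of continuity near $0$, and the size on the shifted contour of the analytic continuation of $\widehat{\Psi}^{\pm}_{s,\varepsilon}$ entering Proposition \ref{Prop Rn limit1}. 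To make your route rigorous you would have to reopen the proofs of Theorem \ref{main theorem2} and Proposition \ref{Prop Rn limit1} and check that all of these are controlled uniformly for targets supported on intervals of length up to $o(n)$. This is plausible for $s>0$ fixed, since $\int e^{-z_0y}e^{-sy}\mathbbm{1}_{[a,a+\Delta)}(y)\,dy$ and its derivative stay bounded uniformly in $\Delta$ when $|z_0|<s$, but it is precisely the verification your proposal omits; without it the sandwich argument only yields the statement for fixed $\Delta$.

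The paper sidesteps the issue by exploiting uniformity in the perturbation $l$ rather than uniformity in the target: it applies \eqref{Petrov-Target01} with the \emph{fixed} target $\psi(y)=\mathbbm{1}_{\{y\geq 0\}}$ but with $l$ replaced by $l+(a+\Delta)/n$, which is still $o(1)$ uniformly for $\Delta=o(n)$. This gives
\begin{align*}
\mathbb{E}\big[\varphi(X_n^x)\mathbbm{1}_{\{\log|G_nx|\geq n(q+l)+a+\Delta\}}\big]
=\frac{\bar r_s(x)}{s\sigma_s\sqrt{2\pi n}}\,e^{-n\Lambda^*(q+l+\frac{a+\Delta}{n})}\big[\nu_s(\varphi)+r_n\big]
\end{align*}
with a single sequence $r_n\to 0$ uniform in $x$, $l$ and $\Delta$; subtracting the $\Delta>0$ version from the $\Delta=0$ version and expanding $n\Lambda^*(q+l+u/n)$ via Lemma \ref{lemmaCR001} produces the factor $e^{-sa}(1-e^{-s\Delta})$. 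The lower threshold $\Delta_n$ then arises exactly as you anticipated at the end of your proposal: one needs $1-e^{-s\Delta}\asymp s\Delta$ to dominate $r_n$, i.e.\ $\Delta_n^{-1}=o(r_n^{-1})$. I recommend adopting this route: it converts the unproved uniformity-in-$\psi$ claim into the already-established uniformity-in-$l$, which is the very reason the Petrov-type extension was proved.
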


We can compare this result with Theorem 3.3 in \cite{Guivarch2015},
from which the above equivalence can be deduced for $l=0$ and $\Delta$ fixed.  

It is easy to see that, under additional assumption \ref{CondiMoment}, the assertion of Theorem  \ref{Theorem local LD001}
remains true for  $s<0$ small enough. 
This can be deduced from  Theorem \ref{Thm-Neg-s-Target}: the details are left to the reader.

%


\section{Spectral gap theory for the norm} \label{sec:spec gap norm}
\subsection{Properties of the transfer operator}\label{subsec a change of measure}

Recall that the transfer operator $P_{s}$ and the conjugate operator
$P_{s}^{*}$ are defined by
\eqref{transfoper001}. 
Below $P_s\nu_{s}$ stands for the measure on $\mathcal S$ such that $P_s\nu_{s}(\varphi)=\nu_{s}(P_s \varphi),$ 
for continuous functions $\varphi$ on $\mathcal S$, and $P^*_s\nu^*_{s}$ is defined similarly.   
The following result was proved in \cite{BDGM2014, BS2016} for positive matrices, 
and in \cite{GE2016} for invertible matrices.
\begin{proposition} \label{transfer operator}
Assume that $\mu$ satisfies 
either conditions  \ref{Aexp},  \ref{A1} for invertible matrices, 
or conditions \ref{Aexp},  \ref{A2} for positive matrices. 
Let $s\in I_{\mu}$. 
Then
the spectral radii $\varrho(P_{s})$ and $\varrho(P_{s}^{*})$ are both equal to $\kappa(s)$,
and there exist a unique, up to a scaling constant,
strictly positive
H\"{o}lder continuous  
function $r_{s}$
and a unique probability measure $\nu_{s}$ on $\mathcal S$ such that 
\begin{align*}
P_s r_s=\kappa(s)r_s, \quad P_s\nu_{s}=\kappa(s)\nu_{s}.
\end{align*}
Similarly, there exist a unique strictly positive 
H\"{o}lder continuous function 
$r_{s}^{\ast}$ and 
a unique probability measure $\nu_{s}^{*}$ on $\mathcal S$ such that
$$ P_{s}^{*}r_{s}^{*}=\kappa(s)r_{s}^{*}, 
\quad 
P_{s}^{*}\nu_{s}^{*}=\kappa(s)\nu_{s}^{\ast}.$$
Moreover, 
the functions
$r_s$ and $r_s^*$ are given by 
\begin{align*}
r_{s}(x)= \int_{\mathcal{S}} |\langle x, y\rangle|^{s}\nu^*_{s}(dy),
\quad
r_{s}^*(x)= \int_{\mathcal{S}} |\langle x, y\rangle|^{s}\nu_{s}(dy),
 \quad x\in \mathcal{S}.
\end{align*}
\end{proposition}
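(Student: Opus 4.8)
The plan is to establish Proposition \ref{transfer operator} by combining the quasi-compactness of the transfer operators on $\mathcal B_\gamma$ with a Krein--Rutman type argument, and then to verify the explicit integral formulas for the eigenfunctions by a direct duality computation. First I would recall (or re-prove) that under \ref{Aexp} together with \ref{A1} (resp. \ref{A2}), the operator $P_s$ acts boundedly on the Banach space $\mathcal B_\gamma$ and satisfies a Doeblin--Fortet (Lasota--Yorke) inequality of the form $\|P_s^n\varphi\|_\gamma \le C\rho^n\|\varphi\|_\gamma + C_n\|\varphi\|_\infty$ with $\rho<\varrho(P_s)$; this is exactly where the contraction properties of the projective action enter, namely that invertible strongly irreducible proximal matrices (resp. allowable positive matrices) contract the angular distance (resp. the Hilbert metric) along the random walk, and the exponential moment \ref{Aexp} controls the resulting constants. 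By the Ionescu-Tulcea--Marinescu theorem this gives quasi-compactness of $P_s$ on $\mathcal B_\gamma$, and likewise for $P_s^*$.

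Next I would identify the peripheral spectrum. The positivity of $P_s$ (it maps nonnegative functions to nonnegative functions, and in fact strictly positive functions to strictly positive functions because of the irreducibility/positivity assumptions) forces, via a Perron--Frobenius/Krein--Rutman argument on the finite-dimensional peripheral eigenspace, that the spectral radius $\varrho(P_s)$ is itself an eigenvalue with a strictly positive H\"older eigenfunction, that it is simple, and that there are no other eigenvalues of modulus $\varrho(P_s)$; this uses strong irreducibility/minimality of $V(\Gamma_\mu)$ to rule out periodicity. Dualizing, $P_s$ acting on measures has a unique probability eigenmeasure $\nu_s$ for the same eigenvalue. The identification $\varrho(P_s)=\kappa(s)$ then follows by iterating: $P_s^n\mathbf 1(x)=\mathbb E|G_nx|^s$, so $\nu_s(P_s^n\mathbf 1)=\kappa(s)^n\nu_s(\mathbf 1)$ on one hand, while comparability of $|G_nx|$ with $\|G_n\|$ uniformly in $x$ (using $\iota(G_n)\le |G_nx|\le\|G_n\|$ and the moment bound to control $\iota(G_n)^{-1}$) gives $\nu_s(P_s^n\mathbf 1)\asymp\mathbb E\|G_n\|^s$, whence the limit defining $\kappa(s)$ equals $\varrho(P_s)$. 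The same argument applied to $g_1^{\mathrm T}$ handles $P_s^*$, noting $\|g^{\mathrm T}\|=\|g\|$ so the eigenvalue is again $\kappa(s)$.

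Finally I would verify the closed formulas $r_s(x)=\int_{\mathcal S}|\langle x,y\rangle|^s\,\nu_s^*(dy)$ and its conjugate. The clean way is a direct check that this function is a strictly positive H\"older eigenfunction of $P_s$ with eigenvalue $\kappa(s)$, and then invoke the already-established uniqueness. Using $\langle g\cdot x,y\rangle = \langle gx,y\rangle/|gx|$ and $\langle gx,y\rangle=\langle x,g^{\mathrm T}y\rangle$, one computes
\begin{align*}
P_s\Big(\int_{\mathcal S}|\langle\,\cdot\,,y\rangle|^s\nu_s^*(dy)\Big)(x)
&=\int_{\Gamma_\mu}|gx|^s\int_{\mathcal S}\frac{|\langle x,g^{\mathrm T}y\rangle|^s}{|gx|^s}\,\nu_s^*(dy)\,\mu(dg)\\
&=\int_{\mathcal S}\Big(\int_{\Gamma_\mu}|g^{\mathrm T}y|^s\,|\langle x,g^{\mathrm T}\!\cdot y\rangle|^s\mu(dg)\Big)\nu_s^*(dy)
=\int_{\mathcal S}(P_s^*|\langle x,\cdot\rangle|^s)(y)\,\nu_s^*(dy),
\end{align*}
and since $P_s^*\nu_s^*=\kappa(s)\nu_s^*$ the inner integral equals $\kappa(s)\int_{\mathcal S}|\langle x,y\rangle|^s\nu_s^*(dy)$, as required; strict positivity and H\"older regularity in $x$ are immediate from the integral representation (for positivity one uses that $\nu_s^*$ is not supported on a hyperplane, which follows from strong irreducibility/positivity). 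The conjugate formula is symmetric. I expect the main obstacle to be the peripheral-spectrum step — establishing simplicity of $\kappa(s)$ and absence of other peripheral eigenvalues — since this is where the fine structure of $\Gamma_\mu$ (strong irreducibility and proximality in the invertible case, allowability and the presence of a strictly positive matrix in the positive case, and the associated minimality of $V(\Gamma_\mu)$) must be used, and the arguments differ in detail between the two settings; fortunately this is precisely what is already done in \cite{GE2016} for invertible matrices and in \cite{BDGM2014, BS2016} for positive matrices, so I would cite those and limit myself to assembling the pieces and checking the explicit formulas.
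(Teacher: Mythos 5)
The paper does not actually prove this proposition: it is stated with a direct citation to \cite{GE2016} for invertible matrices and to \cite{BDGM2014, BS2016} for positive matrices, with no argument given. Your sketch is a faithful reconstruction of the strategy used in those references (Doeblin--Fortet inequality and Ionescu-Tulcea--Marinescu quasi-compactness on $\mathcal B_\gamma$, a Perron--Frobenius/Krein--Rutman analysis of the peripheral spectrum, identification of $\varrho(P_s)$ with $\kappa(s)$ via $P_s^n\mathbf 1(x)=\mathbb E|G_nx|^s$ and the comparison $\iota(G_n)\le|G_nx|\le\|G_n\|$), and your explicit duality computation verifying that $x\mapsto\int_{\mathcal S}|\langle x,y\rangle|^s\,\nu_s^*(dy)$ is a $\kappa(s)$-eigenfunction of $P_s$ is correct, with the strict positivity correctly reduced to $\nu_s^*$ not being carried by a hyperplane. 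Since you ultimately defer the genuinely hard steps (quasi-compactness and simplicity of the peripheral eigenvalue) to exactly the references the paper itself relies on, your proposal is consistent with the paper's treatment and contains no gap beyond what both you and the authors delegate to the literature.
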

It is easy to see that the family of kernels  
$q_{n}^{s}(x,g)=\frac{|gx|^{s}}{\kappa^{n}(s)}\frac{r_{s}(g \cdot x)}{r_{s}(x)},$
$n\geq 1$
satisfies the following cocycle property:
\begin{align} \label{cocycle01}
q_{n}^{s}(x,g_1)q_{m}^{s}(g_1\!\cdot\!x, g_2)=q_{n+m}^{s}(x,g_2g_1).
\end{align}
The equation $P_sr_s=\kappa(s)r_s$ 
implies that, for any $x\in\mathcal{S}$ and $s\in I_{\mu}$, 
the probability measures 
$\mathbb Q_{s,n}^x(dg_1,\ldots,dg_n)=q_{n}^{s}(x,g_{n}\dots g_{1})\mu(dg_1)\dots\mu(dg_n),$ $n\geq 1,$
form a projective system 
on $M(d,\mathbb{R})^{\mathbb{N}}$. 
By the Kolmogorov extension theorem,
there is a unique probability measure  $\mathbb Q_s^x$ on $M(d,\mathbb{R})^{\mathbb{N}}$, 
with marginals $\mathbb Q_{s,n}^x$; 
denote by $\mathbb{E}_{\mathbb Q_s^x}$ the corresponding expectation.

If $(g_n)_{n\in \mathbb N}$ denotes      
the coordinate process on the space of trajectories 
$M(d,\mathbb{R})^{\mathbb{N}}$, then
the sequence $(g_n)_{n \geq 1}$ 
is i.i.d.\ 
with the common law $\mu$ under $\mathbb{Q}_{0}^{x}.$ 
However, for any $s\in I_{\mu}^{\circ}$ and $x\in \mathcal{S}$, 
the sequence $(g_n)_{n \geq 1}$ is Markov-dependent under the measure $\mathbb Q_s^x$.
Let  
$$X_0^x=x, \ \ X_{n}^x= G_n \!\cdot\! x, \ \ n\geq 1.$$
By the definition of $\mathbb Q_s^x$,
for any bounded measurable function $f$ on $(\mathcal S \times \mathbb R)^{n}$, 
it holds that 
\begin{align}\label{basic equ1}
 \frac{1}{ \kappa^{n}(s) r_{s}(x) }
\mathbb{E} \Big[  r_{s}(X_{n}^{x}) & |G_nx|^{s}  f\big( X_{1}^{x}, \log |G_1 x|,\dots, X_{n}^{x}, \log |G_n x| 
                    \big) \Big]   \nonumber\\
&\quad 
=\mathbb{E}_{\mathbb{Q}_{s}^{x}} \Big[ f \big( X_{1}^{x}, \log |G_1 x|,\dots, X_{n}^{x}, \log |G_n x|  \big) \Big].
\end{align}
Under the measure $\mathbb Q_s^x$, 
the process $(X_{n}^x)_{n\in \mathbb{N}}$ is a Markov chain with the transition operator given by 
\begin{align}
Q_{s}\varphi(x)=\frac{1}{\kappa(s)r_{s}(x)}P_s(\varphi r_{s})(x)
=\frac{1}{\kappa(s)r_{s}(x)}\int_{\Gamma_{\mu}} |gx|^s \varphi(g\!\cdot\! x)r_s(g\!\cdot\!x)\mu(dg).  \nonumber
\end{align}
It has been  proved in \cite{BDGM2014} for positive matrices, 
and in \cite{GE2016} for invertible matrices, that 
$Q_{s}$ has a unique invariant probability measure $\pi_{s}$ supported on $V(\Gamma_{\mu})$
and that, 
for any $\varphi\in \mathcal{C(S)}$,
\begin{align} \label{equcontin Q s limit}
\lim_{n\to\infty}Q_{s}^{n}\varphi
=\pi_{s}(\varphi),  \quad  \mbox{where}  \ 
\pi_{s}(\varphi)=\frac{\nu_{s}(\varphi r_{s})}{\nu_{s}(r_{s})}.
\end{align}
Moreover, letting 
$\mathbb{Q}_{s}=\int\mathbb{Q}_{s}^{x}\pi_{s}(dx),$
from the results of \cite{BDGM2014, GE2016}, it follows that, 
under the assumptions of Theorem \ref{main theorem1}, for any $s\in I_{\mu}$, 
we have 
$ \lim_{n\to\infty} \frac{ \log |G_nx| }{n} =\Lambda'(s),$ 
$\mathbb{Q}_{s}$-a.s.\ and
$\mathbb{Q}_{s}^{x}$-a.s., 
where 
$\Lambda'(s)=\frac{\kappa'(s)}{\kappa(s)}$.

When $s \in (-\eta_0, 0)$ for small enough $\eta_0>0$, 
define the transfer operator $P_s$ as follows: for any $\varphi \in \mathcal{C(S)}$,
\begin{align*}
P_s \varphi (x) = \int_{\Gamma_{\mu}}  \!  |g_1 x |^{s} \varphi( g_1\!\cdot\!x ) \mu(dg_1),
\quad  x \in \mathcal{S}, 
\end{align*}
which is well-defined under condition \ref{CondiMoment}. 
The following proposition is proved in \cite{XGL19}.

\begin{proposition}
\label{transfer operator s negative}
Assume that $\mu$ satisfies 
either conditions  \ref{A1}, \ref{CondiMoment} for invertible matrices, 
or conditions \ref{A2}, \ref{CondiMoment} for positive matrices. 
Then there exists $\eta_0 < \eta$ such that for any $s \in (-\eta_0, 0)$, the spectral radius $\varrho(P_{s})$ of the operator $P_s$ 
is equal to $\kappa(s)$.
Moreover there exist a unique, up to a scaling constant,
strictly positive
H\"{o}lder continuous  
function $r_{s}$
and a unique probability measure $\nu_{s}$ on $\mathcal S$ such that 
\begin{align*}
P_s r_s=\kappa(s)r_s, \quad P_s\nu_{s}=\kappa(s)\nu_{s}.
\end{align*}
\end{proposition}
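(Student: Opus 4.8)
The statement to prove is Proposition~\ref{transfer operator s negative}: for $\mu$ satisfying the stated conditions, there is $\eta_0<\eta$ such that for $s\in(-\eta_0,0)$ the operator $P_s$ (defined by the same integral formula, now with a negative exponent) has spectral radius $\kappa(s)$, a unique positive Hölder eigenfunction $r_s$ and a unique probability eigenmeasure $\nu_s$, both associated to $\kappa(s)$.

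The plan is to treat $P_s$ for small $s<0$ as an analytic perturbation of $P_0$, the Markov operator of the projective chain $X_n^x$, and to invoke the existing spectral gap theory (Proposition~\ref{transfer operator} and the references \cite{BDGM2014,GE2016,GE2016}) together with standard perturbation arguments. First I would check that $P_s$ is a bounded operator on $\mathcal B_\gamma$ for $s$ in a complex neighborhood of $0$: this is where condition~\ref{CondiMoment} enters, since $\||g_1x|^s\varphi(g_1\cdot x)\|$ is controlled by $\mathbb E[\iota(g_1)^{s}]\le\mathbb E[N(g_1)^{|s|}]<\infty$ once $|s|<\eta$, and the Hölder seminorm estimate is obtained exactly as in the proof of Proposition~\ref{transfer operator} for $s>0$, using the Hölder continuity of $x\mapsto|g_1x|^s$ on $\mathcal S$ (uniform in $g_1$ after pulling out a factor $\iota(g_1)^{-|s|}$) — this requires $|s|+\alpha<\eta$ for some $\alpha$, hence the need to shrink to $\eta_0<\eta$. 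Moreover $s\mapsto P_s$ is analytic on this neighborhood (the integrand is analytic in $s$ and dominated locally uniformly), so the map is an analytic family of bounded operators on $\mathcal B_\gamma$.

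Next I would recall that at $s=0$ the operator $P_0=Q_0$ is the transition operator of the chain $X_n^x$, which under \ref{A1} (resp.\ \ref{A2}) has a spectral gap on $\mathcal B_\gamma$: the eigenvalue $1=\kappa(0)$ is simple and isolated, with eigenfunction $\mathbf 1$ and eigenmeasure the unique stationary measure, and the rest of the spectrum lies in a disk of radius $<1$. By analytic perturbation theory (Kato), for $s$ in a small enough neighborhood of $0$, $P_s$ has a unique simple eigenvalue $\lambda_s$ near $1$, depending analytically on $s$, with a corresponding rank-one spectral projector $\Pi_s$ depending analytically on $s$, and the remainder of the spectrum stays inside a fixed disk of radius $\theta<1$; thus $P_s^n=\lambda_s^n\Pi_s+N_s^n$ with $\|N_s^n\|\le C\theta^n$, which in particular gives $\varrho(P_s)=|\lambda_s|$. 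Writing $r_s$ for the eigenfunction normalizing $\Pi_s\varphi=\nu_s(\varphi)r_s$ with $\nu_s$ a probability measure (the eigenmeasure of the dual $P_s^*$ acting on measures), one gets $P_sr_s=\lambda_sr_s$ and $P_s\nu_s=\lambda_s\nu_s$, and by analyticity and positivity at $s=0$ (where $r_0=\mathbf 1>0$) the function $r_s$ stays strictly positive and Hölder for $|s|$ small. Uniqueness up to scaling follows from simplicity of $\lambda_s$ and the fact that any positive eigenfunction must correspond to the dominant eigenvalue (a Krein–Rutman / positivity argument, using that $P_s$ preserves the cone of nonnegative functions since $|g_1x|^s\ge0$).

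Finally I must identify $\lambda_s$ with $\kappa(s)=\lim_n(\mathbb E\|G_n\|^s)^{1/n}$. Here I would use the eigen-relation to compute $\mathbb E[|G_nx|^s]$: iterating $P_sr_s=\lambda_sr_s$ gives $P_s^n\mathbf 1(x)=\mathbb E[|G_nx|^s]$ and $P_s^nr_s=\lambda_s^nr_s$, so $c_1\lambda_s^n\le\mathbb E[|G_nx|^s]\le c_2\lambda_s^n$ uniformly in $x$ by the two-sided bounds $c_1r_s\le\mathbf 1\le c_2r_s$ (valid since $r_s$ is bounded above and below). Then, since for $s<0$ one has comparability of $|G_nx|$ and $\|G_n\|$ up to factors controlled by $\iota(G_n)^{-1}$ and $N(\cdot)$-moments — more carefully, $\mathbb E\|G_n\|^s$ and $\inf_x\mathbb E|G_nx|^s$ differ by sub-exponential factors, or one simply quotes that $\kappa(s)$ as defined in \cite{XGL19} already equals this common exponential rate — taking $n$-th roots and letting $n\to\infty$ yields $\lambda_s=\kappa(s)$. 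The main obstacle I anticipate is the bookkeeping in the very first step: verifying that $P_s$ genuinely acts boundedly and analytically on $\mathcal B_\gamma$ for negative $s$ and controlling the Hölder seminorm, since the factor $|g_1x|^s$ blows up as $x$ approaches the kernel directions and must be tamed uniformly in $g_1$ by the $N(g_1)^\eta$-moment — this is exactly why one needs $\eta_0$ strictly less than $\eta$, leaving room for the Hölder exponent $\gamma$ and an extra margin $\alpha$. Once the perturbative framework is in place on the right Banach space, everything else is a transcription of the $s>0$ arguments already cited in the excerpt, so I would keep those steps brief and refer to \cite{XGL19} for the full details.
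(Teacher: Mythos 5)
The paper itself does not prove this proposition: it states explicitly that the result is established in \cite{XGL19}, so there is no in-paper argument to compare against. Your sketch is the standard (and, as far as the cited literature goes, the intended) route: view $P_s$ for small $|s|$ as an analytic perturbation on $\mathcal B_\gamma$ of the Markov operator $P_0$, use the spectral gap of $P_0$ under \ref{A1} (resp.\ \ref{A2}), apply Kato's perturbation theory to get a simple isolated eigenvalue $\lambda_s$ with analytic rank-one projector, and transfer positivity of $r_0=\mathbf 1$ to $r_s$ by continuity. That outline is correct, and you have also correctly located where condition \ref{CondiMoment} enters (taming $|g_1x|^s=|g_1x|^{-|s|}\le\iota(g_1)^{-|s|}\le N(g_1)^{|s|}$ and leaving room $\eta_0<\eta$ for the H\"older exponent).

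Two steps deserve more care than your sketch gives them. First, the Doeblin--Fortet/Lasota--Yorke estimates showing that $P_s$ is not merely bounded but remains quasi-compact on $\mathcal B_\gamma$ for $s$ near $0$ are where the contraction properties of the projective action (under \ref{A1} or \ref{A2}) must actually be used; boundedness plus analyticity of $s\mapsto P_s$ already suffices for Kato once quasi-compactness at $s=0$ is known, so this part is fine, but you should say explicitly that the perturbation is in the operator norm of $\mathcal L(\mathcal B_\gamma,\mathcal B_\gamma)$, which is exactly what your domination argument provides. Second, the identification $\lambda_s=\kappa(s)$ is genuinely one-sided for $s<0$: from $|G_nx|\le\|G_n\|$ one gets $\mathbb E\|G_n\|^{s}\le\mathbb E|G_nx|^{s}\asymp\lambda_s^n$ for free, but the reverse comparison needs an argument (e.g.\ $\|G_n\|\le\sqrt d\,\max_i|G_ne_i|$ gives $\|G_n\|^{s}\ge d^{s/2}\min_i|G_ne_i|^{s}$, which still requires a lower bound on $\mathbb E[\min_i|G_ne_i|^{s}]$ in terms of a single $\mathbb E|G_ne_j|^{s}$, typically obtained from the uniform two-sided bounds $c_1r_s\le\mathbf 1\le c_2r_s$ applied at each $e_i$ simultaneously, or from the H\"older-type inequalities used in \cite{XGL19}). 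Your fallback of simply quoting \cite{XGL19} for this equality would be circular if the goal is an independent proof, so this is the one point where the proposal, as written, leaves a real gap to be filled.
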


Based on Proposition \ref{transfer operator s negative}, 
in the same way as for $s>0$,  
one can define the measure $\mathbb Q_s^x$ 
for negative values $s<0$ sufficiently close to $0$, 
and one can extend the change of measure formula \eqref{basic equ1} to $s<0$.
Under the measure $\mathbb Q_s^x$, 
the process $(X_{n}^x)_{n\in \mathbb{N}}$ is a Markov chain with the transition operator $ Q_s$
and the assertion \eqref{equcontin Q s limit} holds true. We refer to \cite{XGL19} for details.

\subsection{Spectral gap of the perturbed operator} \label{sec-spgappert}
Recall that the Banach space $B_{\gamma}$ consists of all $\gamma$-H\"{o}lder continuous function on $\mathcal{S}$,
where $\gamma>0$ is a fixed small constant.  
Denote by $\mathcal{L(B_{\gamma},B_{\gamma})}$ 
the set of all bounded linear operators from $\mathcal{B}_{\gamma}$ to $\mathcal{B}_{\gamma}$
equipped with the operator norm
$\left\| \cdot \right\|_{\mathcal B_{\gamma} \to \mathcal B_{\gamma}}$.
For $s\in I_\mu^\circ$ and $z \in \mathbb{C}$ with $s+ \Re z \in I_{\mu}$, 
define a family of perturbed operators $R_{s,z}$ as follows: for any $\varphi \in \mathcal{B}_{\gamma}$, 
\begin{align} \label{operator Rsz}
R_{s,z}\varphi(x) 
 = \mathbb{E}_{\mathbb{Q}_{s}^{x}} \left[ e^{ z( \log| g_1x | - q ) }\varphi(X_{1}^x) \right], 
   \quad  x \in \mathcal{S}. 
\end{align}
It follows from the cocycle property \eqref{cocycle01} that 
\begin{align*} 
R^{n}_{s,z}\varphi(x) 
 = \mathbb{E}_{\mathbb{Q}_{s}^{x}} \left[e^{ z( \log |G_n x| - nq) } \varphi(X_{n}^x) \right],
    \quad  x \in \mathcal{S}. 
\end{align*}

The following proposition collects useful assertions that we will use in the proofs of our results. 
Denote $B_\delta(0): = \{ z \in \mathbb{C}: |z| \leq \delta \}$. 

\begin{proposition} \label{perturbation thm}
Assume that $\mu$ satisfies either conditions \ref{Aexp},  \ref{A1} for invertible matrices, 
or conditions \ref{Aexp},  \ref{A2} for positive matrices. 
Then, there exists $\delta>0$ such that for any $z \in  B_\delta(0)$,
\begin{align}
\label{perturb001}
R^{n}_{s,z}=\lambda^{n}_{s,z}\Pi_{s,z}+N^{n}_{s,z}, \  n\geq 1.
\end{align}
Moreover, for any $s \in I_{\mu}^{\circ}$, 
the following assertions hold: 
 \begin{itemize}
\item[{\rm(i)}]
$\Pi_{s,z}$ is a rank-one projection for $|z| \leq \delta$, with 
$\Pi_{s,0}(\varphi)(x)=\pi_{s}(\varphi)$ for any $\varphi \in \mathcal{B}_{\gamma}$ and $x\in \mathcal{S}$,
$\Pi_{s,z}N_{s,z}=N_{s,z}\Pi_{s,z}=0$ 
and
\begin{equation}\label{relationlamkappa001}
\lambda_{s,z} = e^{-qz} \frac{\kappa(s+z)}{\kappa(s)}, \quad \mbox{for} \  z \in  B_\delta(0).
\end{equation}
For any fixed $k \geq 1$, there exist $\varkappa_s \in(0,1)$ and $c_s$ such that
$$
\sup_{|z| < \delta}
\|\frac{d^{k}}{dz^{k}}N^{n}_{s,z} \|_{\mathcal{B}_{\gamma}\rightarrow\mathcal{B}_{\gamma}} 
\leq c_s \varkappa_s^{n}, \  n\geq 1. 
$$ 
In addition, 
the mappings $z \mapsto \Pi_{s,z}: B_\delta(0) \to \mathcal{L(B_{\gamma},B_{\gamma})}$
and $z \mapsto N_{s,z}: B_\delta(0) \to \mathcal{L(B_{\gamma},B_{\gamma})}$ are analytic
in the strong operator sense.

\item[{\rm(ii)}]
For any compact set $K\subseteq\mathbb{R}\backslash\{0\}$,  
there exists a constant $C_{K}>0$ such that 
for any $n\geq 1$ and $\varphi\in \mathcal{B}_{\gamma}$, we have
\begin{align*}
\sup_{t\in K}\sup_{x\in \mathcal{S}}|R^{n}_{s,it}\varphi(x)|\leq e^{-nC_{K}}\sup_{x\in \mathcal{S}}|\varphi(x)|.
\end{align*}

\item[{\rm(iii)}]
 The mapping $z \mapsto \lambda_{s,z}: B_\delta(0) \to \mathbb{C}$ is analytic, and  
\begin{align*}
\lambda_{s,z}=1+\frac{\sigma_s^{2}}{2}z^{2} + \frac{ \Lambda'''(s) }{6} z^3 + o(z^{3})  \quad  as \  z \to 0,
\end{align*}
where
$$
\sigma_s^{2}= \Lambda''(s)= \lim_{n\to\infty} 
\frac{1}{n} \mathbb{E}_{\mathbb{Q}_{s}}( \log |G_n x| - nq )^{2}
$$
and 
$$
\Lambda'''(s) = \lim_{n\to\infty}\frac{1}{n}\mathbb{E}_{\mathbb{Q}_{s}}( \log |G_n x| - nq )^{3}.
 $$
In addition, if the measure $\mu$ 
is non-arithmetic, 
then the asymptotic variance $\sigma_s^{2}$ is strictly positive.
\end{itemize}
\end{proposition}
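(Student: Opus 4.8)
The plan is to obtain the spectral-gap decomposition \eqref{perturb001} from the standard perturbation theory of Hennion--Herv\'e type (see \cite{Hen-Herve2004}), applied to the unperturbed operator $Q_s = R_{s,0}$, and then to identify the leading eigenvalue $\lambda_{s,z}$ and extract its Taylor expansion. First I would recall that $Q_s$ acts on $\mathcal B_\gamma$ and satisfies the Doeblin--Fortet (Lasota--Yorke) inequality together with quasi-compactness: by \eqref{equcontin Q s limit} it has a spectral gap on $\mathcal B_\gamma$, with $1$ a simple isolated eigenvalue (eigenfunction $\mathbf 1$, eigenmeasure $\pi_s$), and the rest of the spectrum contained in a disk of radius $<1$. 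Condition \ref{Aexp} guarantees that $R_{s,z}$ is well defined and bounded on $\mathcal B_\gamma$ for $s+\Re z \in I_\mu$, since $|g_1 x|^{\Re z}\le \|g_1\|^{\Re z}$ when $\Re z\ge 0$ and $|g_1x|^{\Re z}\le \iota(g_1)^{\Re z}$ when $\Re z<0$, and the H\"older constant of $x\mapsto e^{z(\log|g_1x|-q)}$ is controlled by $\|g_1\|^{s+\alpha}\iota(g_1)^{-\alpha}$ up to constants, which is the quantity appearing in \ref{Aexp}. Differentiating under the integral sign (again justified by \ref{Aexp}, which provides a small margin $\alpha$ of extra integrability) shows that $z\mapsto R_{s,z}\in\mathcal L(\mathcal B_\gamma,\mathcal B_\gamma)$ is analytic in the strong operator sense on a complex neighbourhood of $0$. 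Perturbation theory of isolated simple eigenvalues (Kato) then yields, for $|z|\le\delta$ small, a decomposition $R_{s,z}=\lambda_{s,z}\Pi_{s,z}+N_{s,z}$ with $\Pi_{s,z}$ a rank-one projection, $\Pi_{s,z}N_{s,z}=N_{s,z}\Pi_{s,z}=0$, $\lambda_{s,0}=1$, $\Pi_{s,0}(\varphi)=\pi_s(\varphi)\mathbf 1$, and $z\mapsto\lambda_{s,z},\Pi_{s,z},N_{s,z}$ all analytic; iterating gives \eqref{perturb001}. The uniform geometric bound on $\frac{d^k}{dz^k}N_{s,z}^n$ follows from the spectral radius of $N_{s,z}$ being $\le\varkappa_s<1$ uniformly on $|z|\le\delta$ (shrinking $\delta$ if needed), combined with Cauchy's estimates for the derivatives of the analytic family $z\mapsto N_{s,z}^n$ on a slightly larger disk.

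For part (i), the identity \eqref{relationlamkappa001} is the key computation: applying $R_{s,z}^n$ to $\mathbf 1$ and pairing with $\pi_s$ (or directly using the change-of-measure formula \eqref{basic equ1}), one finds
\[
\lambda_{s,z}^n\,\pi_s\!\big(\Pi_{s,z}\mathbf 1\big)(1+o(1))
 = \mathbb E_{\mathbb Q_s}\big[e^{z(\log|G_nx|-nq)}\big]
 = \frac{e^{-nqz}}{\kappa^n(s)}\,\frac{\mathbb E[\,r_s(X_n^x)\,|G_nx|^{s+z}\,]}{\cdots},
\]
and since $\mathbb E\|G_n\|^{s+z}\asymp\kappa(s+z)^n$ (using $\kappa(s+\Re z)=\lim(\mathbb E\|G_n\|^{s+\Re z})^{1/n}$ and the eigenfunction bounds $c\le r_s\le C$), taking $n$-th roots and letting $n\to\infty$ forces $\lambda_{s,z}=e^{-qz}\kappa(s+z)/\kappa(s)$. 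Here $z\mapsto\kappa(s+z)$ is analytic near $z=0$ because $\Lambda=\log\kappa$ is analytic on $I_\mu^\circ$ (stated in Section \ref{subsec.notations}); this also gives analyticity of $\lambda_{s,z}$ directly and consistency with the perturbation-theoretic construction.

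Part (iii) is then a Taylor expansion: from \eqref{relationlamkappa001}, $\log\lambda_{s,z}=\Lambda(s+z)-\Lambda(s)-qz$, and since $q=\Lambda'(s)$ the linear term vanishes, giving $\log\lambda_{s,z}=\frac{\Lambda''(s)}{2}z^2+\frac{\Lambda'''(s)}{6}z^3+o(z^3)$, whence $\lambda_{s,z}=1+\frac{\sigma_s^2}{2}z^2+\frac{\Lambda'''(s)}{6}z^3+o(z^3)$ with $\sigma_s^2=\Lambda''(s)$. The probabilistic formulas for $\sigma_s^2$ and $\Lambda'''(s)$ as normalized limits of centered moments under $\mathbb Q_s$ follow by differentiating $\lambda_{s,z}^n=\mathbb E_{\mathbb Q_s}[e^{z(\log|G_nx|-nq)}](1+o(1))$ at $z=0$ twice and three times respectively and dividing by $n$ (the $o(1)$ correction, coming from $\Pi_{s,z}$ and $N_{s,z}^n$, is analytic and contributes $O(1/n)$ after differentiation, by the Cauchy estimates established above). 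For the strict positivity of $\sigma_s^2$ under non-arithmeticity, I would argue by contradiction: $\sigma_s^2=0$ would force $\log|G_nx|-nq$ to be degenerate under $\mathbb Q_s$, which by a standard cocycle argument means $\log|gx|-q$ is a coboundary, i.e. $\log|gx|=q+\vartheta(x)-\vartheta(g\cdot x)$ for some continuous $\vartheta$ on $V(\Gamma_\mu)$ and all $g\in\Gamma_\mu$, $x\in V(\Gamma_\mu)$ — precisely the arithmeticity relation with $t\to0$; for invertible matrices this contradicts condition \ref{A1} via \cite[Proposition 4.6]{GU2005}, and for positive matrices it contradicts \ref{A3}. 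Part (ii) — the uniform exponential decay of $R_{s,it}^n$ on compact sets $K\subset\mathbb R\setminus\{0\}$ — is the genuinely non-trivial input and the main obstacle: it is exactly the statement that $R_{s,it}$ has spectral radius strictly less than $\varrho(Q_s)=1$ for every $t\ne0$, which again rests on non-arithmeticity, and its proof requires the Ionescu-Tulcea--Marinescu / Hennion--Herv\'e machinery (Doeblin--Fortet inequality for $R_{s,it}$, identification of peripheral eigenvalues via a coboundary argument) uniformly over the compact set $K$; I would cite \cite{GE2016} and \cite{BDGM2014,BS2016} for this, where the corresponding statement for the normalized operator is established, and transfer it to $R_{s,it}$ through the conjugacy $R_{s,it}\varphi=\frac{e^{-itq}}{\kappa(s)r_s}P_{s+it}(\varphi r_s)$.
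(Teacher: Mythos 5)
Your proposal is correct and follows essentially the same route as the paper: the decomposition \eqref{perturb001} and assertions (i)--(iii) are obtained from Hennion--Herv\'e-type analytic perturbation theory exactly as in Buraczewski--Mentemeier \cite{BS2016} (which the paper simply cites, extending from imaginary to complex $z$), and the identity \eqref{relationlamkappa001} comes from the conjugacy $R_{s,z}^n(\varphi)=e^{-nz\Lambda'(s)}P_{s+z}^n(\varphi r_s)/(\kappa^n(s)r_s)$ together with the uniqueness of the dominant eigenvalue $\kappa(s+z)$ of $P_{s+z}$, which is precisely the paper's \eqref{PfRsw01}. The only caution is that your primary derivation of \eqref{relationlamkappa001} by taking $n$-th roots pins down only $|\lambda_{s,z}|$ when $z$ is complex; close this either by analytic continuation from real $z$ or by relying directly on the conjugacy, as the paper does.
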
  

 The assertions (i), (ii), (iii) of Proposition \ref{perturbation thm}, except \eqref{relationlamkappa001},
 have been proved in \cite{BS2016} for imaginary-valued $z \in (-i\delta, i\delta)$, 
based on the perturbation theory (see \cite{HH01}).
The assertions (i), (iii) 
can be extended to the complex-valued $z \in B_\delta(0)$ without changes in the proof in \cite{BS2016}.

The identity \eqref{relationlamkappa001} is not proved in \cite{BS2016}, but
can be obtained by using the arguments from \cite{XGL19}.
By the perturbation theory, the operator $P_{s}$ and its spectral radius $\kappa(s)$ can be extended to $P_{s+z}$ 
and the eigenvalue $\kappa(s+z)$, respectively, with $z$ in the small neighborhood of $0$,
see \cite{GE2016}.
By the definitions of  $R_{s,z}$ and $P_{z}$ using the change of measure \eqref{basic equ1}, we obtain
for any $\varphi \in \mathcal{B}_{\gamma}$, $n \geq 1$,
$s \in I_{\mu}^{\circ}$ and $z \in B_\delta(0)$,
\begin{align}\label{PfRsw01}
R_{s,z}^n (\varphi)
=  e^{ -n z \Lambda'(s)} \frac{ P_{s+z}^n (\varphi r_s) }{ \kappa^n(s) r_s  }.
\end{align}
Since $r_s$ is uniformly bounded, 
using \eqref{PfRsw01} and
 the fact that $\kappa(s+z)$ is the unique eigenvalue of $P_{s+z}$,
 we deduce \eqref{relationlamkappa001}.
  



For negative values $s<0$ sufficiently close to $0$, we can define the perturbed operator $R_{s,z}$
as in \eqref{operator Rsz}. 
The following spectral gap property of $R_{s,z}$
is established in \cite{XGL19}. 

\begin{proposition} \label{perturbation thm nrgztive s}
Assume that $\mu$ satisfies 
conditions  \ref{A1}, \ref{CondiMoment} for invertible matrices, 
or conditions \ref{A2}, \ref{CondiMoment} for positive matrices. 
Then, there exist $\eta_0 < \eta$ and $\delta>0$ 
such that for any $s \in (-\eta_0, 0)$
and $z \in  B_\delta(0)$,
\begin{align*} 
R^{n}_{s,z}=\lambda^{n}_{s,z}\Pi_{s,z}+N^{n}_{s,z}, \  n\geq 1.
\end{align*}
Moreover, for any $s \in (-\eta_0, 0)$, the assertions (i), (ii), (iii) of Proposition \ref{perturbation thm} hold true.
\end{proposition}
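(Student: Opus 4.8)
The plan is to run the perturbative scheme that underlies Proposition~\ref{perturbation thm} for $s>0$ (following \cite{BS2016} and the perturbation theory of \cite{HH01}); the only genuinely new ingredient is the control of the unbounded weight $|g_1x|^{s}$ for negative exponents, supplied by the moment condition~\ref{CondiMoment}. \emph{Spectral gap of the unperturbed operator.} By Proposition~\ref{transfer operator s negative}, for $s\in(-\eta_0,0)$ the operator $P_s$ has spectral radius $\kappa(s)$ with strictly positive H\"older eigenfunction $r_s$ and eigenmeasure $\nu_s$, so $Q_s:=\frac{1}{\kappa(s)r_s}P_s(\cdot\,r_s)$ is a Markov operator on $\mathcal{B}_\gamma$ and $R_{s,0}=Q_s$. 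One first shows that $Q_s$ is quasi-compact on $\mathcal{B}_\gamma$ with a simple dominant eigenvalue $1$ and the rest of the spectrum in a disc of radius $\varkappa_s<1$, with $\Pi_{s,0}\varphi=\pi_s(\varphi)$ as in \eqref{equcontin Q s limit}. This follows from a Doeblin--Fortet inequality $\|Q_s^n\varphi\|_\gamma\leq C_s(\varkappa_s^n\|\varphi\|_\gamma+\|\varphi\|_\infty)$, proved using the contraction of the projective action on $V(\Gamma_\mu)$ as in \cite{GE2016, BDGM2014} together with the irreducibility and aperiodicity of $(X_n^x)$ coming from \ref{A1} (resp.\ \ref{A2}).

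\emph{Analytic perturbation.} Choosing $\delta>0$ with $\eta_0+\delta<\eta$, one checks that $z\mapsto R_{s,z}$ is analytic from $B_\delta(0)$ into $\mathcal{L}(\mathcal{B}_\gamma,\mathcal{B}_\gamma)$: expanding $e^{z(\log|g_1x|-q)}$ in powers of $z$, each coefficient is a bounded operator since $\mathbb{E}_{\mathbb{Q}_s^x}\big[\abs{\log|g_1x|-q}^k e^{\delta\abs{\log|g_1x|-q}}\big]$ is finite uniformly in $x$, which via \eqref{basic equ1} reduces to an estimate of the form $\mathbb{E}\big[\|g_1\|^{s+\delta}\iota(g_1)^{-\delta}(1+\abs{\log N(g_1)})^k\big]<\infty$, a consequence of \ref{CondiMoment}. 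Since $R_{s,0}=Q_s$ has the isolated simple eigenvalue $1$, Kato's analytic perturbation theory yields, for $\delta$ small, the decomposition $R_{s,z}^n=\lambda_{s,z}^n\Pi_{s,z}+N_{s,z}^n$ with $\Pi_{s,z}$ rank-one, $\Pi_{s,z}N_{s,z}=N_{s,z}\Pi_{s,z}=0$, the analyticity of $z\mapsto\lambda_{s,z},\Pi_{s,z},N_{s,z}$ in the strong operator sense, and the uniform bound $\sup_{|z|<\delta}\|\frac{d^k}{dz^k}N_{s,z}^n\|_{\mathcal{B}_\gamma\to\mathcal{B}_\gamma}\leq c_s\varkappa_s^n$. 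This gives assertion~(i) apart from \eqref{relationlamkappa001}.

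\emph{Identification of $\lambda_{s,z}$ and assertions (ii)--(iii).} Extending \eqref{PfRsw01} to $s<0$ via the change of measure \eqref{basic equ1}, one has $R_{s,z}^n(\varphi)=e^{-nz\Lambda'(s)}\kappa(s)^{-n}r_s^{-1}P_{s+z}^n(\varphi r_s)$; since $\kappa(s+z)$ is the dominant eigenvalue of $P_{s+z}$ (analytic continuation of $P_s$, using the analyticity of $\kappa$ near $0$), comparing growth rates gives $\lambda_{s,z}=e^{-qz}\kappa(s+z)/\kappa(s)$ with $q=\Lambda'(s)$, i.e.\ \eqref{relationlamkappa001}. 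Then $\log\lambda_{s,z}=\Lambda(s+z)-\Lambda(s)-\Lambda'(s)z$, whose Taylor expansion at $z=0$ has no linear term and yields (iii), with $\sigma_s^2=\Lambda''(s)>0$; the positivity and the probabilistic representations of $\sigma_s^2$ and $\Lambda'''(s)$ follow from the cumulant interpretation of $z\mapsto\log\lambda_{s,z}$ under $\mathbb{Q}_s$ and from non-arithmeticity (automatic under \ref{A1} for invertible matrices, assumed via \ref{A3} for positive ones). Finally, (ii), the uniform exponential decay of $R_{s,it}^n$ for $t$ in a compact subset of $\mathbb{R}\setminus\{0\}$, is obtained exactly as for $s>0$ in \cite{BS2016}, by showing that $R_{s,it}$ has no peripheral eigenvalue; this is precisely where non-arithmeticity excludes a solution of $e^{it\log|gx|-i\beta+i\vartheta(g\cdot x)-i\vartheta(x)}=1$.

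\emph{Main obstacle.} The one point with no counterpart in the case $s>0$ is securing the Doeblin--Fortet inequality and the operator-norm analyticity \emph{uniformly in $z$} when the weights $|g_1x|^{s}$ and $|g_1x|^{s+\Re z}$ are unbounded. This forces $\eta_0$ and $\delta$ to be chosen with $\eta_0+\delta<\eta$, and requires splitting the relevant integrals according to whether $|g_1x|$ is small, using $|g_1x|\geq\iota(g_1)$ together with $\mathbb{E}[\iota(g_1)^{-\eta}]<\infty$ from \ref{CondiMoment}. Once this uniform integrability is in place, the remaining arguments are identical to the case $s>0$.
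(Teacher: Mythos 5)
The paper offers no proof of this proposition: it is imported verbatim from \cite{XGL19}, so there is no argument in the text to compare yours against step by step. Your sketch follows exactly the route the paper indicates for the $s>0$ analogue (quasi-compactness of $Q_s$ on $\mathcal{B}_\gamma$, Kato-type analytic perturbation, identification of $\lambda_{s,z}$ via \eqref{PfRsw01}, non-arithmeticity for assertion (ii)), correctly isolates the only new difficulty for $s<0$ (uniform integrability of the weights, forcing $\eta_0+\delta<\eta$), and I see no gap in it.
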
  

\section{Proof of Theorems \ref{main theorem1} and \ref{Thm-Neg-s}} \label{sec proof of main theroem1}

\subsection{Auxiliary results} \label{secAuxres001}
We need some preliminary statements.
Following Petrov \cite{Petrov75book},
under the changed measure $\mathbb Q_s^x$,
define the Cram\'{e}r series $\zeta_s$ by
\begin{align*}
\zeta_s (t) = \frac{\gamma_{s,3} }{ 6 \gamma_{s,2}^{3/2} }  
  +  \frac{ \gamma_{s,4} \gamma_{s,2} - 3 \gamma_{s,3}^2 }{ 24 \gamma_{s,2}^3 } t
  +  \frac{\gamma_{s,5} \gamma_{s,2}^2 - 10 \gamma_{s,4} \gamma_{s,3} \gamma_{s,2} + 15 \gamma_{s,3}^3 }{ 120 \gamma_{s,2}^{9/2} } t^2
  +  \ldots,
  \end{align*}
where $\gamma_{s,k} = \Lambda^{(k)} (s)$ and $\Lambda(s) = \log \kappa(s)$. 
The following lemma gives a full expansion of $\Lambda^*(q+l)$ 
in terms of power series in $l$ in a neighborhood of $0$, 
for $q=\Lambda'(s)$ and $s \in I_{\mu}^\circ \cup (\eta_0,0)$, where $\eta_0$ 
is from Proposition \ref{perturbation thm nrgztive s}.

\begin{lemma} \label{lemmaCR001}
Assume conditions of Theorem \ref{main theorem1} or Theorem \ref{Thm-Neg-s}. 
Let $q=\Lambda'(s)$. 
Then, there exists $\delta>0$ such that, for any $|l|\leq \delta,$
\begin{align*} 
\Lambda^*(q+l) = \Lambda^{*}(q) + sl +  h_s(l), 
\end{align*}
where  
$h_s$ is linked to the Cram\'{e}r series $\zeta_s$ 
by the identity
\begin{align}\label{expan hs 01}
h_s(l) = \frac{ l^2}{2 \sigma_s^2} - \frac{l^3}{\sigma_s^3} \zeta_s( \frac{l}{\sigma_s} ).
\end{align}
\end{lemma}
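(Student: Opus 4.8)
The plan is to derive the claimed expansion directly from the analytic structure of $\Lambda$ near the point $s$ where $q = \Lambda'(s)$, following the classical Cramér–Petrov computation (cf.\ Petrov \cite{Petrov75book}). First I would recall that under the conditions of Theorem \ref{main theorem1} (resp.\ Theorem \ref{Thm-Neg-s}), the function $\Lambda = \log\kappa$ is analytic in a neighborhood of $s$ in $I_\mu^\circ$ (resp.\ in $(-\eta_0,0)$), by Proposition \ref{perturbation thm} (resp.\ Proposition \ref{perturbation thm nrgztive s}) together with the identity \eqref{relationlamkappa001}, and that $\sigma_s^2 = \Lambda''(s) > 0$. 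In particular $\Lambda'$ is a strictly increasing analytic diffeomorphism of a neighborhood of $s$ onto a neighborhood of $q$, so for $|l|$ small the equation $\Lambda'(u) = q+l$ has a unique analytic solution $u = u(l)$ with $u(0) = s$; by the inverse function theorem $u(l) = s + \frac{l}{\sigma_s^2} + O(l^2)$, and more precisely $u(l)$ has a convergent power series expansion in $l$ on a disc $|l| \le \delta$.

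Next I would use that, since $\Lambda^*$ is the Fenchel–Legendre transform and $q+l$ lies in $\Lambda'(I_\mu^\circ)$ (resp.\ $\Lambda'((-\eta_0,0))$), the supremum defining $\Lambda^*(q+l)$ is attained at $u = u(l)$, giving the clean formula
\begin{align*}
\Lambda^*(q+l) = u(l)\,(q+l) - \Lambda(u(l)).
\end{align*}
Differentiating in $l$ and using $\Lambda'(u(l)) = q+l$, the $u'(l)$ terms cancel and one gets $\frac{d}{dl}\Lambda^*(q+l) = u(l)$; since $\Lambda^*(q) = sq - \Lambda(s)$ and $u(0) = s$, integrating yields $\Lambda^*(q+l) = \Lambda^*(q) + sl + h_s(l)$ where $h_s(l) = \int_0^l (u(t) - s)\,dt$. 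This already isolates the nonlinear remainder and shows $h_s$ is analytic with $h_s(0) = h_s'(0) = 0$.

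Finally, to pin down the explicit coefficients and match \eqref{expan hs 01}, I would substitute the Taylor expansion $\Lambda'(u) = q + \sigma_s^2(u-s) + \frac{\gamma_{s,3}}{2}(u-s)^2 + \frac{\gamma_{s,4}}{6}(u-s)^3 + \cdots$ into $\Lambda'(u(l)) = q+l$, solve recursively for the coefficients of $u(l) - s$ as a power series in $l$ (Lagrange inversion), and integrate termwise to obtain $h_s(l)$; writing $l = \sigma_s t$ and collecting powers, the coefficients organize exactly into $\frac{l^2}{2\sigma_s^2} - \frac{l^3}{\sigma_s^3}\zeta_s(l/\sigma_s)$ with $\zeta_s$ the series displayed above — this is a purely formal identity between two ways of writing the inverse-function power series, so I would verify it by comparing the first few coefficients (the $t^0$ term of $\zeta_s$ reproducing $\frac{\gamma_{s,3}}{6\sigma_s^3}$ as in \eqref{Def Jsl}) and invoking the standard bookkeeping for the general term. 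The main obstacle is not conceptual but organizational: carefully carrying the Lagrange inversion far enough and reconciling the normalization conventions (the powers of $\gamma_{s,2} = \sigma_s^2$ in the denominators of $\zeta_s$, and the sign) so that the two expressions agree coefficient by coefficient; convergence on some disc $|l| \le \delta$ is automatic from analyticity of $\Lambda'$ and $\sigma_s^2 > 0$.
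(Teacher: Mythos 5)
Your proposal is correct and follows essentially the same route as the paper: solve the saddle-point equation $\Lambda'(u)=q+l$ for $u(l)=s+l_s$ near $s$, write $\Lambda^*(q+l)=u(l)(q+l)-\Lambda(u(l))$, invert the series $l=\Lambda'(s+l_s)-\Lambda'(s)$, and match coefficients with the Cram\'er series as in Petrov. Your intermediate step $\frac{d}{dl}\Lambda^*(q+l)=u(l)$ followed by integration is a minor stylistic variant of the paper's direct Taylor expansion of $h_s(l)=l_sl-\sum_{k\ge2}\frac{\Lambda^{(k)}(s)}{k!}l_s^k$, but the substance is identical.
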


\begin{proof} 
Let $(\Lambda')^{-1}$ be the inverse function of $\Lambda'.$
With the notation 
$l_s= (\Lambda')^{-1}(q+l) -s $, 
we have $\Lambda'(s+l_s) = q+l$.
By the definition of $\Lambda^*$, it follows that 
$\Lambda^*(q+l) = (s+l_s)(q+l) - \Lambda(s+l_s)$.  
This, together with $\Lambda^*(q) = sq - \Lambda(s)$ and Taylor's formula, gives
\begin{align} \label{expan Lambda 01}
h_s(l):= \Lambda^*(q+l) - \Lambda^{*}(q) - sl  
= l_sl - \sum_{k=2}^{\infty} \frac{ \Lambda^{(k)}(s) }{k!} l_s^k.
\end{align}
From $\Lambda'( s + l_s ) = q+l$ and $\Lambda'(s)=q$, we deduce that $l= \Lambda'( s + l_s ) - \Lambda'(s) $, 
so that, by Taylor's formula, 
\begin{align} \label{expan Lambda 02}
l =  \sum_{k=1}^{\infty} \frac{\Lambda^{(k+1)}(s)}{k!} l_s^k.
\end{align}
The rest of the proof is similar to that in Petrov \cite{Petrov75book} (chapter VIII, section 2).
For $|l|$ small enough, the equation \eqref{expan Lambda 02} has a unique solution $l_s$ given by
\begin{align*}
l_s = \frac{l}{ \sigma_s^2 } - \frac{ \Lambda^{(3)}(s) }{ 2 \sigma_s^6 } l^2 - 
\frac{\Lambda^{(4)}(s) \sigma_s^2 - 3(\Lambda^{(3)}(s))^2}{6 \sigma_s^{10}} l^3 + \cdots. 
\end{align*}
Together with \eqref{expan Lambda 01} and \eqref{expan Lambda 02}, this implies
\begin{align*} 
h_s(l) = \sum_{k=2}^{\infty} 
\Lambda^{(k)}(s) \frac{k-1}{k!} l_s^{k}
=\frac{ l^2}{2 \sigma_s^2} - \frac{l^3}{\sigma_s^3} \zeta_s( \frac{l}{\sigma_s} ). 
\end{align*}
\end{proof}

Let us fix a non-negative Schwartz function $\rho$ on $\mathbb{R}$  with $\int_{\mathbb{R}} \rho(y) dy=1$, 
whose Fourier transform $\widehat{\rho}$ is supported on $[-1,1]$
and has a continuous extension in the complex plane. 
Moreover, $\widehat{\rho}$ is analytic in the domain 
$D : = \{ z \in \mathbb{C}: |z| < 1, \Im z \neq 0 \}$.
Such a function can be constructed as follows.
On the real line define $\widehat{\varsigma}(t)= e^{- \frac{1}{1-t^2}}$ if $t \in [-1,1]$, 
and $\widehat{\varsigma} =0$ elsewhere.
The function $\widehat{\varsigma}$ is compactly supported and has finite derivatives  of all orders. 
Its inverse Fourier transform $\varsigma$, however, is not non-negative.
Let $\widehat{\rho}_0= \widehat{\varsigma} \ast \widehat{\varsigma}$ be the convolution of 
$\widehat{\varsigma}$ with itself.
It is supported by $[-2,2]$ and its inverse Fourier transform $\rho_0$ satisfies $\rho_0 = 2\pi \varsigma^2 \geq 0$.
We show below that $\widehat{\rho}_0$ has a continuous extension in the complex plane,  
and $\widehat{\rho}_0$ is analytic in the domain $D$. 
Finally we rescale and renormalize $\rho_0$ by setting 
$\rho(y)= \rho_0(y/2)/ [ 2\widehat\rho_0(0)]$ for $y \in \mathbb{R}$.

\begin{lemma}\label{LemAnalyExten}
$\widehat{\rho}_0$ has a continuous extension in the complex plane,
and $\widehat{\rho}_0$ is analytic in the domain $D$. 
\end{lemma}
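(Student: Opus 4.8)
The claim has two parts: that $\widehat{\rho}_0 = \widehat{\varsigma} \ast \widehat{\varsigma}$ extends continuously to all of $\mathbb{C}$, and that this extension is analytic on $D = \{z : |z| < 1, \Im z \neq 0\}$. The key observation is that $\widehat{\varsigma}(t) = e^{-1/(1-t^2)} \mathbbm{1}_{[-1,1]}(t)$ is supported on a compact interval, so the convolution integral defining $\widehat{\rho}_0$ is over a bounded domain, and we may substitute a complex argument directly:
\begin{align*}
\widehat{\rho}_0(z) = \int_{\mathbb{R}} \widehat{\varsigma}(u) \widehat{\varsigma}(z-u)\, du = \int_{-1}^{1} \widehat{\varsigma}(u) \widehat{\varsigma}(z-u)\, du,
\end{align*}
where $\widehat{\varsigma}(z-u)$ must be interpreted via an analytic (or at least continuous) extension of $\widehat{\varsigma}$ off the real line. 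So the real first step is to understand the extension of $\widehat{\varsigma}$ itself: the function $w \mapsto e^{-1/(1-w^2)}$ is holomorphic on $\mathbb{C} \setminus \{w = \pm 1\} \cup (\text{the set where } 1 - w^2 \text{ is a non-positive real, i.e. } w \in (-\infty,-1] \cup [1,\infty) \text{ on the real axis, plus the imaginary-axis issue})$ — more carefully, $1 - w^2$ avoids $(-\infty, 0]$ precisely when $w$ is not real with $|w| \geq 1$, so $e^{-1/(1-w^2)}$ is holomorphic on $\mathbb{C} \setminus \{w \in \mathbb{R} : |w| \geq 1\}$. The truncation to $[-1,1]$ is what creates the subtlety.

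\textbf{Continuity.} First I would show $\widehat{\rho}_0$ extends continuously to $\mathbb{C}$. For $z$ with $\Im z \neq 0$, write $\widehat{\rho}_0(z) = \int_{-1}^{1} \widehat{\varsigma}(u)\, \widetilde{\varsigma}(z-u)\, du$ where $\widetilde{\varsigma}$ is the holomorphic extension of $e^{-1/(1-w^2)}$ to $\{|\Im w| > 0\} \cup \{w \in \mathbb{R}: |w|<1\}$, extended by $0$ to $\{w \in \mathbb{R}: |w| \geq 1\}$; the point is that along the segment $u \in [-1,1]$, the argument $z - u$ has nonzero imaginary part whenever $\Im z \neq 0$, so $\widetilde{\varsigma}(z-u)$ is a continuous (indeed smooth) function of $u$, and bounded: $|e^{-1/(1-w^2)}| = e^{-\Re(1/(1-w^2))}$, and one checks $\Re(1/(1-w^2))$ stays bounded below (it can be negative, but $\widehat{\varsigma}$ is still bounded on, say, a neighborhood of the real segment $[-1,1]$ intersected with a strip $|\Im w| \leq 1$) — so dominated convergence gives continuity of $z \mapsto \widehat{\rho}_0(z)$ on $\mathbb{C} \setminus \mathbb{R}$, and a limiting argument as $\Im z \to 0$ recovers agreement with the real-variable convolution $\widehat{\varsigma} \ast \widehat{\varsigma}$, which is itself continuous (both factors being bounded and compactly supported). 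The delicate point is the behavior near $z = \pm 2$ and near the real points where one of the two factors' singularities can approach; but since $\widehat{\varsigma}$ vanishes to infinite order at $\pm 1$, the extension $\widetilde{\varsigma}$ is actually continuous up to the boundary away from $\pm 1$ and tends to $0$ approaching $\pm 1$ from inside the strip, which tames the integrand.

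\textbf{Analyticity on $D$.} For $z \in D$ (so $0 < |z| < 1$ and $\Im z \neq 0$), I would apply Morera's theorem / differentiation under the integral sign: on the segment of integration $u \in [-1,1]$, the map $z \mapsto \widetilde{\varsigma}(z - u)$ is holomorphic in $z$ for each fixed $u$ (as $z - u$ stays off the real axis, or more precisely off the bad set, when $\Im z \neq 0$), and the integrand is dominated locally uniformly; hence $\widehat{\rho}_0$ is holomorphic on $\{\Im z \neq 0\} \supseteq D$. Combining with the first part gives the lemma as stated. \textbf{The main obstacle} I anticipate is bookkeeping the singular set of $w \mapsto e^{-1/(1-w^2)}$ precisely and confirming that, along the compact integration segment $[-1,1]$ shifted by $z$, one never hits it while also getting a uniform-enough bound for dominated convergence near the endpoints $\pm 1$ where $1 - w^2 \to 0$; the infinite-order vanishing of $\widehat{\varsigma}$ at $\pm1$ is exactly what saves this, but spelling out "$|e^{-1/(1-w^2)}| \to 0$ as $w \to \pm 1$ within the relevant region" requires a small estimate on $\Re(1/(1-w^2))$ near those points.
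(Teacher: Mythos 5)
There is a genuine gap, and it is located exactly at the point you flag as ``the main obstacle'': your choice of complex extension of $\widehat{\varsigma}$. You analytically continue $w\mapsto e^{-1/(1-w^2)}$ off the real axis and truncate to $0$ only on the real set $\{w\in\mathbb{R}:|w|\geq 1\}$. But $\pm 1$ are \emph{essential singularities} of $e^{-1/(1-w^2)}$, and the sign of $\Re\frac{1}{1-w^2}$ flips there: writing $w=1+re^{i\theta}$ one gets $\Re\frac{1}{1-w^2}\approx-\frac{\cos\theta}{2r}$, so approaching $1$ from the right inside a strip ($\cos\theta>0$) gives $|e^{-1/(1-w^2)}|\approx e^{1/(2r)}\to\infty$. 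Thus your $\widetilde{\varsigma}$ is unbounded in every neighborhood of $\pm1$; the claims that it ``is still bounded on a neighborhood of the real segment $[-1,1]$ intersected with a strip'' and that it ``tends to $0$ approaching $\pm1$ from inside the strip'' are false, the infinite-order vanishing of $\widehat{\varsigma}$ at $\pm1$ from inside becomes infinite-order blow-up from outside. Consequently there is no integrable dominating function uniform in $\Im z$ near the real points $z$ for which the segment $\{z-u:u\in[-1,1]\}$ passes $\pm1$ on the outside, and the ``limiting argument as $\Im z\to0$'' does not recover the real convolution: the pointwise limit of your integrand at such $u$ is $\widehat{\varsigma}(u)\,e^{-1/(1-(z-u)^2)}\neq \widehat{\varsigma}(u)\widehat{\varsigma}(z-u)=0$. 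In fact your extension cannot be the one asserted by the lemma: the paper's extension multiplies the second factor by $\mathbbm{1}_{\{|z-t|<1\}}$, so the two candidates already differ on $D$ (the difference is an integral of a nonvanishing integrand over $\{t\in[-1,1]:|z-t|\geq1\}$, which is nonempty for many $z\in D$); since two functions holomorphic on $\{|z|<1,\ \Im z\neq0\}$ and continuous up to $(-1,1)$ with the same boundary values must coincide there (Morera plus the identity theorem), at most one of the two can be the continuous extension, and the paper's is.

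What the paper does instead, and what a repair of your argument would amount to, is to define $\widehat{\varsigma}(z)=e^{-1/(1-z^2)}$ for $|z|<1$ and $\widehat{\varsigma}(z)=0$ for $|z|\geq1$, $z\in\mathbb{C}$. This keeps the extension uniformly bounded (indeed $\Re\frac{1}{1-z^2}>\frac12$ on the open unit disc), at the cost of a discontinuity on the unit circle; continuity of $\widehat{\rho}_0(z)=\int_{-1}^{1}\widehat{\varsigma}(t)\widehat{\varsigma}(z-t)\mathbbm{1}_{\{|z-t|<1\}}\,dt$ then follows by dominated convergence because the real integration segment meets the circle $\{|z-t|=1\}$ in at most two points. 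Your analyticity step would also need modification: with the correct (truncated) extension the integrand is no longer holomorphic in $z$ for fixed $t$ because the indicator $\mathbbm{1}_{\{|z-t|<1\}}$ moves with $z$, so one cannot simply differentiate under the integral sign; the paper instead bounds $\widehat{\varsigma}'$ by $c\varepsilon^{-4}$ on $\{z'\in D:|\Im z'|>\varepsilon\}$ and controls the difference quotients directly. Your observation that the real first step is to understand the extension of $\widehat{\varsigma}$ itself is the right one, but the extension must be chosen so as to stay bounded near $\pm1$, which forces the truncation into the complex plane and changes the structure of both halves of the proof.
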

\begin{proof}
The function $\widehat{\varsigma}$ can be extended to the complex plane as follows:
\begin{equation*}
\widehat{\varsigma}(z) = 
\begin{cases}
e^{-\frac{1}{1 - z^2}}  &  |z| < 1,  \  z \in \mathbb{C} \\
0                       &  |z| \geq 1,   \   z \in \mathbb{C}.
\end{cases}
\end{equation*}
It is easily verified that $\widehat{\varsigma}$ is continuous in the interior of the unit disc and outside it, 
but is not continuous at any point on the unit circle $| z | =1$. 
Note also that $\widehat{\varsigma}$ is uniformly bounded on $\mathbb{C}$. 
Recall that the function $\widehat{\rho}_0  = \widehat{\varsigma} \ast \widehat{\varsigma}$ is defined on the real line.
We extend it to the complex plane by setting 
$\widehat{\rho}_0(z) 
= \int_{-1}^{1} \widehat{\varsigma}(t) \widehat{\varsigma}(z-t)  \mathbbm{1}_{ \{ |z-t| < 1 \} } dt.$
The latter integral  is well defined for any $z \in \mathbb{C}$,
since $\widehat{\varsigma}$ is bounded. 
We are going to show that $\widehat{\varsigma}$ is continuous in $\mathbb C$.
For any fixed $z \in \mathbb{C}$ and $h \in \mathbb{C}$ with $|h|$ small, we write 
\begin{align}\label{EquaLemContZ}
| \widehat{\rho}_0 (z + h) - \widehat{\rho}_0(z) |
\leq   \int_{-1}^{1}  \widehat{\varsigma}(t) 
  | \widehat{\varsigma}(z-t + h)  - \widehat{\varsigma}(z-t) |   dt. 
\end{align}
The set $T_{z}=\{t: |z-t| =1\}$ of points of discontinuity of the function $t\mapsto \widehat{\varsigma}(z-t)$ 
consists of at most two points. 
For any $t \in [-1, 1]$, $ t\not\in  T_{z} $, 
by the definition of $\widehat{\varsigma}$, we have that 
$| \widehat{\varsigma}(z-t + h)  - \widehat{\varsigma}(z-t) | \to 0$ as $|h| \to 0$. 
Since the Lebesgue measure of $T_{z}$ is $0$, 
applying the Lebesgue dominated convergence theorem and taking into account the boundedness of  
the function $\widehat{\varsigma}$ on $\mathbb{C}$,
we see that $\widehat{\rho}_0$ is continuous in the complex plane.

We next show that $\widehat{\rho}_0$ is analytic in the domain $D = \{ z' \in \mathbb{C}: |z'| < 1, \Im z' \neq 0 \}$. 
Fix $z\in D$. Let $\varepsilon=\Im z /2 \in (0,\frac{1}{2})$. 
Denote $D (\varepsilon) =: \{ z' \in D: |\Im z'| > \varepsilon \}$. 
One can verify that the derivative $\widehat{\varsigma}'(z)$ exists and is uniformly bounded by $\frac{c}{\varepsilon^4}$
on the domain $D (\varepsilon)$.
For any 
$h \in \mathbb{C}$ with $|h|$ small enough, we have
\begin{align*}
\frac{\widehat{\rho}_0 (z + h) - \widehat{\rho}_0(z)}{h}
=  &  \  \int_{[-1,1] \setminus T_{z}}   \widehat{\varsigma}(t) 
  \frac{\widehat{\varsigma}(z-t + h)  
         - \widehat{\varsigma}(z-t) 
          }{h}  dt  \nonumber\\
=  &  \  \int_{[-1,1] \setminus T_{z}}  \widehat{\varsigma}(t)  \left(  \int_0^1 \widehat{\varsigma}'(z-t + \theta h)
     \mathbbm{1}_{ \{ |z-t + \theta h | < 1 \} }   d\theta  \right)  dt. 
\end{align*}
Since for any $t \in [-1, 1]$ and $\theta \in [0,1]$,
we have $|\Im (z - t + \theta h)| \geq \varepsilon$ uniformly in $|h|< \varepsilon$. 
 This implies that $z - t + \theta h \in D(\varepsilon)$
 and thus $\widehat{\varsigma}'(z-t + \theta h)$ is bounded, uniformly in $|h|< \varepsilon$ and $t\in [-1,1]$. 
 Applying twice the Lebesgue dominated convergence theorem, we obtain 
 that $\widehat{\rho}_0'(z)$ exists and is given by 
 $\widehat{\rho}_0'(z) = 
 \int_{[-1,1] \setminus T_{z}}  \widehat{\varsigma}(t) \widehat{\varsigma}'( z- t) 
 dt$. 
 Hence $\widehat{\rho}_0$ is analytic in the domain $D$. 
\end{proof}

For any $\varepsilon>0$, define the density 
$\rho_{\varepsilon}(y)=\frac{1}{\varepsilon}\rho(\frac{y}{\varepsilon})$, $y\in\mathbb R,$ 
whose Fourier transform has a compact support in $[-\varepsilon^{-1},\varepsilon^{-1}]$ and is analytically extendable in a neighborhood of $0$. 
For any non-negative integrable function $\psi$,
following the paper \cite{GE2017}, we introduce two modified functions related to $\psi$ and establish some two-sided bounds.
For any $\varepsilon>0$ and $y\in \mathbb{R}$, 
set $\mathbb{B}_{\varepsilon}(y)=\{y' \in\mathbb{R}: |y'-y|\leq\varepsilon\}$ 
and  
\begin{align}\label{smoo001}
{\psi}^+_{\varepsilon}(y)=\sup_{y'\in\mathbb{B}_{\varepsilon}(y)}\psi(y') 
\quad  \text{and}  \quad 
{\psi}^-_{\varepsilon}(y)=\inf_{y'\in\mathbb{B}_{\varepsilon}(y)}\psi(y'). 
\end{align}
\begin{lemma}  \label{estimate u convo}
Suppose that  $\psi$ is a non-negative integrable function and that 
${\psi}^+_{\varepsilon}$ and ${\psi}^-_{\varepsilon}$ are measurable for any $\varepsilon>0$,
then for sufficiently small $\varepsilon$, 
there exists a positive constant $C_{\rho}(\varepsilon)$ with $C_{\rho}(\varepsilon) \to 0$ as $\varepsilon \to 0$,
such that, for any $x\in \mathbb{R}$, 
\begin{align}
{\psi}^-_{\varepsilon}\!\ast\!\rho_{\varepsilon^2}(x) - 
\int_{|y|\geq \varepsilon} {\psi}^-_{\varepsilon}(x-y) \rho_{\varepsilon^2}(y)dy
\leq \psi(x) \leq (1+ C_{\rho}(\varepsilon))
{\psi}^+_{\varepsilon}\!\ast\!\rho_{\varepsilon^2}(x). \nonumber
\end{align}
\end{lemma}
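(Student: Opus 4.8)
The plan is to prove the two-sided bound by exploiting the mollifier $\rho_{\varepsilon^2}$, which concentrates almost all its mass on the interval $[-\varepsilon,\varepsilon]$ as $\varepsilon\to 0$. First I would record the elementary inequalities ${\psi}^-_{\varepsilon}(y')\leq \psi(x)\leq {\psi}^+_{\varepsilon}(y')$ for every $y'\in\mathbb B_\varepsilon(x)$, which follow directly from the definitions in \eqref{smoo001}. The upper bound is the easier half: multiplying $\psi(x)\leq {\psi}^+_{\varepsilon}(x-y)$, valid for $|y|\leq\varepsilon$, by $\rho_{\varepsilon^2}(y)$ and integrating over $|y|\leq\varepsilon$ gives $\psi(x)\int_{|y|\leq\varepsilon}\rho_{\varepsilon^2}(y)\,dy\leq {\psi}^+_{\varepsilon}\!\ast\!\rho_{\varepsilon^2}(x)$, where I use that the tail integral of $\psi^+_\varepsilon \ast \rho_{\varepsilon^2}$ over $|y| > \varepsilon$ is non-negative. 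Dividing by $\int_{|y|\leq\varepsilon}\rho_{\varepsilon^2}(y)\,dy =: (1+C_\rho(\varepsilon))^{-1}$ yields the claimed inequality, provided I check that $C_\rho(\varepsilon)\to 0$.

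The key computation behind $C_\rho(\varepsilon)\to 0$ is the tail estimate for $\rho_{\varepsilon^2}$. By the scaling $\rho_{\varepsilon^2}(y)=\varepsilon^{-2}\rho(y/\varepsilon^2)$ one has $\int_{|y|\geq\varepsilon}\rho_{\varepsilon^2}(y)\,dy = \int_{|u|\geq 1/\varepsilon}\rho(u)\,du$, and since $\rho$ is a Schwartz function (in particular it has finite moments of all orders, so $\int |u|\rho(u)\,du<\infty$), this tail integral tends to $0$ as $\varepsilon\to 0$; concretely $\int_{|u|\geq 1/\varepsilon}\rho(u)\,du \leq \varepsilon\int |u|\rho(u)\,du$. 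Hence $\int_{|y|\leq\varepsilon}\rho_{\varepsilon^2}(y)\,dy \to 1$, so for $\varepsilon$ small enough this quantity exceeds $1/2$ and $C_\rho(\varepsilon) := \big(\int_{|y|\leq\varepsilon}\rho_{\varepsilon^2}(y)\,dy\big)^{-1} - 1 \to 0$. This is the only place where the specific structure of $\rho$ (rather than just $\rho\geq 0$, $\int\rho=1$) enters, and it is the main point to get right, though it is routine.

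For the lower bound I would argue symmetrically: from $\psi(x)\geq {\psi}^-_{\varepsilon}(x-y)$ for $|y|\leq\varepsilon$, multiply by $\rho_{\varepsilon^2}(y)$ and integrate over $|y|\leq\varepsilon$ to get $\psi(x)\int_{|y|\leq\varepsilon}\rho_{\varepsilon^2}(y)\,dy\geq \int_{|y|\leq\varepsilon}{\psi}^-_{\varepsilon}(x-y)\rho_{\varepsilon^2}(y)\,dy = {\psi}^-_{\varepsilon}\!\ast\!\rho_{\varepsilon^2}(x) - \int_{|y|\geq\varepsilon}{\psi}^-_{\varepsilon}(x-y)\rho_{\varepsilon^2}(y)\,dy$. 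Since $\int_{|y|\leq\varepsilon}\rho_{\varepsilon^2}(y)\,dy\leq 1$ and $\psi\geq 0$, the left side is at most $\psi(x)$, which gives exactly the stated lower inequality. Throughout I need the measurability hypothesis on ${\psi}^\pm_\varepsilon$ only to make sense of the convolutions and apply Fubini/Tonelli; integrability of $\psi$ together with boundedness of $\rho_{\varepsilon^2}$ ensures all integrals converge. I expect no genuine obstacle here — the one thing to be careful about is the direction of the inequalities when dividing by $\int_{|y|\leq\varepsilon}\rho_{\varepsilon^2}(y)\,dy$, which is a number in $(0,1]$, and keeping track of the discarded non-negative tail terms on the correct side.
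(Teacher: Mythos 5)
Your proof is correct, and it is essentially the standard argument: the paper itself omits the proof, referring to Lemma 5.2 of \cite{GLE2017}, which proceeds by the same smoothing/mass-concentration reasoning you use. The two points that need care — defining $C_\rho(\varepsilon)=\big(\int_{|y|\le\varepsilon}\rho_{\varepsilon^2}(y)\,dy\big)^{-1}-1$ and checking via the scaling $\int_{|y|\ge\varepsilon}\rho_{\varepsilon^2}(y)\,dy=\int_{|u|\ge 1/\varepsilon}\rho(u)\,du\to 0$ that it vanishes, and keeping the discarded non-negative tail terms on the correct side of each inequality — are both handled properly.
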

The proof of the above lemma, being similar to that of Lemma 5.2 in \cite{GLE2017}, will not be detailed here.

The next assertion is the key point in establishing Theorem \ref{main theorem1}. 
Its proof is based on 
the spectral gap properties of the perturbed operator $R_{s,z}$ 
(see Proposition \ref{perturbation thm})  
and on the saddle point method,
see Daniels \cite{Daniels1954}, Richter \cite{Richter1957}, 
Ibragimov and Linnik \cite{IbrLinnik65} and Fedoryuk \cite{Fedoryuk1987}. 
Let us introduce the necessary notation.
In the following, 
let $\varphi$ be a $\gamma$-H\"{o}lder continuous function on $\mathcal S$. 
Assume that 
$\psi: \mathbb R \mapsto \mathbb C$
is a continuous function with compact support in $\mathbb{R}$, and moreover,  
$\psi$ has a continuous extension 
in some neighborhood of $0$ in the complex plane
and can be extended analytically to the domain 
$D_{\delta} : = \{ z \in \mathbb{C}: |z| < \delta, \Im z \neq 0 \}$ for some small $\delta >0$. 
Recall that $\pi_s$ is the invariant measure of the Markov chain $X_n^x$ under the changed measure 
$\mathbb{Q}_s^x$, see \eqref{equcontin Q s limit}. 

\begin{proposition} \label{Prop Rn limit1}
Assume conditions of Theorem \ref{main theorem1}. 
Let $q=\Lambda'(s)$, where $s\in I_\mu^\circ.$ 
Then, for any 
positive sequence $(l_n)_{n \geq 1}$ satisfying $l_n \to 0$ as $n \to \infty$, 
we have, 
uniformly in $x\in \mathcal{S}$, $|l|\leq l_n $ and $\varphi \in \mathcal{B}_{\gamma}$, 
\begin{align*} 
& \Big| \sqrt{n}  \   \sigma_s   e^{n h_s(l)}
\int_{\mathbb R} e^{-it l n} R^{n}_{s,it}(\varphi)(x) \psi (t) dt
- \sqrt{2\pi} \psi(0)\pi_{s}(\varphi)
\Big|  \nonumber\\
\leq  &\  C \| \varphi \|_\gamma  \Big( \frac{ \log n }{ \sqrt{n} } + l_n \Big). 
\end{align*}
\end{proposition}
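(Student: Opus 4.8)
The plan is to localize the integral near $t=0$, insert the spectral decomposition of $R_{s,it}$, deform the contour so that it runs through the saddle point, and reduce to a Gaussian integral; the prefactor $e^{nh_s(l)}$ will be absorbed exactly by the value of $\lambda^n_{s,\cdot}$ at the saddle.

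\textbf{Step 1 (Localization).} Fix a small $\delta'>0$, to be further constrained in Step 2. Since $\psi$ has compact support and, by Proposition \ref{perturbation thm}(ii), $\sup_x|R^n_{s,it}\varphi(x)|\leq e^{-nC}\|\varphi\|_\gamma$ for $t$ in any compact subset of $\mathbb R\setminus\{0\}$, the part $|t|\geq\delta'$ contributes $O(e^{-nC}\|\varphi\|_\gamma)$ to the integral; as $l_n\to0$ forces $\sup_{|l|\leq l_n}h_s(l)\to0$ (Lemma \ref{lemmaCR001}), the factor $\sqrt n\,\sigma_s\,e^{nh_s(l)}$ cannot compensate and this term is $O(e^{-cn}\|\varphi\|_\gamma)$. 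On $|t|<\delta'$ I insert $R^n_{s,it}=\lambda^n_{s,it}\Pi_{s,it}+N^n_{s,it}$ from Proposition \ref{perturbation thm}(i); the $N^n_{s,it}$--piece is bounded by $C\varkappa_s^n\|\varphi\|_\gamma$ and is again negligible after the prefactor. There remains to analyse
\begin{align*}
T_n:=\sqrt n\,\sigma_s\,e^{nh_s(l)}\int_{-\delta'}^{\delta'}e^{-itln}\,\lambda^n_{s,it}\,\Pi_{s,it}(\varphi)(x)\,\psi(t)\,dt .
\end{align*}

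\textbf{Step 2 (Shift to the saddle).} By \eqref{relationlamkappa001}, $\log\lambda_{s,z}=\Lambda(s+z)-\Lambda(s)-qz$, so $z\mapsto\log\lambda_{s,z}-zl$ has a critical point at $z_0=z_0(l):=(\Lambda')^{-1}(q+l)-s$ (the $l_s$ of Lemma \ref{lemmaCR001}), and a short computation with the Legendre transform yields the key identity $\log\lambda_{s,z_0}-z_0l=-h_s(l)$. Now $z\mapsto\lambda_{s,z}$ and $z\mapsto\Pi_{s,z}(\varphi)(x)$ are analytic on $B_\delta(0)$ (Proposition \ref{perturbation thm}(i)), while $\psi$, being continuous on a complex neighbourhood of $0$ and analytic off the real axis there, is analytic on a disc about $0$ by Morera's theorem; shrink $\delta'$ so that the closed rectangle with vertices $\pm\delta',\pm\delta'-il_s$ lies in the common domain of analyticity. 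Cauchy's theorem then replaces $\int_{-\delta'}^{\delta'}$ by the integral along $t=\tau-il_s$, $\tau\in[-\delta',\delta']$, plus two vertical edges of length $|l_s|\leq Cl_n$; on those, $it=\theta\pm i\delta'$ with $|\theta|\leq Cl_n$, so $\Re\big((it)^2\big)=\theta^2-\delta'^2<0$, whence $|\lambda^n_{s,it}|\leq e^{-cn\delta'^2}$ and, since $e^{nh_s(l)}\leq e^{Cnl_n^2}$, the vertical edges contribute $O(e^{-c'n}\|\varphi\|_\gamma)$.

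\textbf{Step 3 (Gaussian reduction).} On the shifted contour $it=z_0+i\tau$, and using $\log\lambda_{s,z_0}=z_0l-h_s(l)$, all exponential prefactors collapse: $\sqrt n\,\sigma_s\,e^{nh_s(l)}\cdot e^{-l_sln}\cdot e^{n\log\lambda_{s,z_0}}=\sqrt n\,\sigma_s$. Writing $\phi(\tau):=\log\lambda_{s,z_0+i\tau}-\log\lambda_{s,z_0}-i\tau l$ and substituting $\tau=u/\sqrt n$, up to $O(e^{-cn}\|\varphi\|_\gamma)$,
\begin{align*}
T_n=\sigma_s\int_{-\delta'\sqrt n}^{\delta'\sqrt n}e^{n\phi(u/\sqrt n)}\,\Pi_{s,z_0+iu/\sqrt n}(\varphi)(x)\,\psi(u/\sqrt n-il_s)\,du .
\end{align*}
By Proposition \ref{perturbation thm}(iii), $\phi(0)=\phi'(0)=0$ and $\phi''(0)=-\sigma^2_{s,z_0}$ with $\sigma^2_{s,z_0}:=\Lambda''(s+l_s)=\sigma_s^2+O(l_n)>0$ (using $\sigma_s^2>0$, valid under the hypotheses); in particular the $\tau$--linear term coming from $e^{-i\tau ln}$ is killed --- this is the point of the saddle. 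From $\Re\log\lambda_{s,z}=\tfrac12\sigma_s^2\Re(z^2)+O(|z|^3)$ and $\log\lambda_{s,z_0}\geq0$ one gets $|e^{n\phi(\tau)}|\leq e^{-cn\tau^2}$ for small $|\tau|$ and $|e^{n\phi(\tau)}|\leq e^{-cn}$ elsewhere on $[-\delta',\delta']$, so the integral concentrates on $|u|\leq C\sqrt{\log n}$. There $e^{n\phi(u/\sqrt n)}=e^{-\sigma^2_{s,z_0}u^2/2}\big(1+O(u^3/\sqrt n)\big)$ with the $O(u^3/\sqrt n)$--term purely imaginary to leading order, hence integrating against the even Gaussian to $O(1/n+l_n/\sqrt n)$; replacing $\sigma^2_{s,z_0}$ by $\sigma_s^2$ costs $O(l_n)$, replacing $\Pi_{s,z_0+iu/\sqrt n}(\varphi)(x)\,\psi(u/\sqrt n-il_s)$ by $\pi_s(\varphi)\psi(0)$ costs $O\big((l_n+1/\sqrt n)\|\varphi\|_\gamma\big)$ since $z\mapsto\Pi_{s,z}(\varphi)(x)$ and $\psi$ are Lipschitz near $0$ with $\|\cdot\|_\gamma$--controlled constants, and $\sigma_s\int_{\mathbb R}e^{-\sigma_s^2u^2/2}du=\sqrt{2\pi}$. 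Collecting the estimates gives $T_n=\sqrt{2\pi}\,\psi(0)\,\pi_s(\varphi)+O\big(\|\varphi\|_\gamma(\log n/\sqrt n+l_n)\big)$, uniformly in $x\in\mathcal S$, $|l|\leq l_n$ and $\varphi\in\mathcal B_\gamma$ (all operator bounds are taken in $\|\cdot\|_{\mathcal B_\gamma\to\mathcal B_\gamma}$, and $z_0$ depends on neither $x$ nor $\varphi$).

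\textbf{Main difficulty.} The two delicate points are the contour deformation --- one needs $\lambda_{s,z}$, $\Pi_{s,z}(\varphi)(x)$ and $\psi$ to be simultaneously analytic in a fixed complex neighbourhood of the segment of integration, the analyticity of $\psi$ near $0$ being recovered from the two stated hypotheses through Morera --- and the uniform Gaussian bound $|e^{n\phi(\tau)}|\leq e^{-cn\tau^2}$ over the \emph{whole} interval $|\tau|\leq\delta'$, which is available only because the saddle $z_0=l_s$ together with the identity $\log\lambda_{s,z_0}=z_0l-h_s(l)$ makes the prefactor $e^{nh_s(l)}$ be absorbed exactly, with no residual exponential growth, uniformly in $|l|\leq l_n$.
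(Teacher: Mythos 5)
Your proposal is correct and follows essentially the same route as the paper's proof: cut off $|t|\geq\delta$ via Proposition \ref{perturbation thm}(ii), discard the $N^n_{s,it}$ remainder, pass to the complex variable $z=it$, deform the contour through the saddle point $z_0=l_s$ using the analyticity of $\lambda_{s,z}$, $\Pi_{s,z}$ and $\psi$ (with the key identity $\log\lambda_{s,z_0}-z_0l=-h_s(l)$ absorbing the prefactor $e^{nh_s(l)}$), and finish with a Laplace-type Gaussian evaluation on $|\tau|\lesssim n^{-1/2}\log n$ yielding the error $O(\|\varphi\|_\gamma(\log n/\sqrt n+l_n))$. The only differences are cosmetic (e.g.\ your explicit appeal to Morera for the analyticity of $\psi$ across the real axis, and a slightly sharper parity argument for the cubic term, neither of which changes the estimate).
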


\begin{proof}
Denote $c_s(\psi)= \frac{\sqrt{2\pi}}{\sigma_s} \psi(0)\pi_{s}(\varphi)$. 
Taking sufficiently small $\delta >0$, 
we write 
\begin{align} \label{Thm1 integral1 J}
 &  
\Big| \sqrt{n}  \   e^{n h_s(l)}
\int_{\mathbb R} e^{-it l n} R^{n}_{s,it}(\varphi)(x) \psi (t) dt 
- c_s(\psi)  \Big|  \nonumber\\ 
&\leq    
\Big| \sqrt{n} ~   e^{nh_s(l)}
\int_{|t|\geq\delta}
e^{-itln}R^{n}_{s,it}(\varphi)(x) \psi(t) dt  \Big|
\nonumber\\
& \ \ + 
\Big| \sqrt{n}  \  e^{n h_s(l)}\int_{|t|<\delta}
e^{-itl n}R^{n}_{s,it}(\varphi)(x)
 \psi(t)dt - c_s(\psi)
\Big| 
\nonumber\\
& =    I(n) + J(n).  
\end{align}
For $I(n)$, 
since $\psi$ is bounded and compactly supported on the real line, 
taking into account Proposition \ref{perturbation thm} (ii), the fact $|e^{-it l n}| = 1$
and equality \eqref{expan hs 01}, 
we get
\begin{align} \label{Thm1 integral1 J1}
\sup_{x\in \mathcal{S}} \sup_{|l|\leq l_n }
|I(n)| \leq C_{\delta}  e^{- c_{\delta} n} \| \varphi \|_{\gamma}. 
\end{align}
For $J(n)$, by Proposition \ref{perturbation thm} (i), we have
$$
R^{n}_{s,it}(\varphi)(x)
= \lambda^{n}_{s,it}\Pi_{s,it}(\varphi)(x)
+ N^{n}_{s,it}(\varphi)(x).
$$
Set for brevity 
$\psi_{s,x}(t) = \Pi_{s,it}(\varphi)(x) \psi(t)$. 
It follows that 
\begin{eqnarray} \label{Prop Rn1}
J(n)
\leq \!\!\!\!\!\!\!\!&&
\Big| \sqrt{n}  \  e^{n h_s(l) }  \int_{|t|<\delta} e^{-i t l n}
\lambda^{n}_{s,it}
\psi_{s,x}(t) dt
- c_s(\psi)
 \Big|   \nonumber\\
\!\!\!\!\!\!\!\!&&
+ 
\Big| \sqrt{n}  \   e^{n h_s(l) } \int_{|t|< \delta }  e^{-i t l n}
N^{n}_{s,it}(\varphi)(x) \psi(t) dt
\Big|    \nonumber\\
=\!\!\!\!\!\!\!\!&& J_1(n)+J_2(n). 
\end{eqnarray}
For the second term $J_2(n)$, 
applying Proposition \ref{perturbation thm} (i), 
we get that there exist constants $c_{\delta}>0$ and $\varkappa \in (0,1)$ 
such that 
\begin{align*}
\sup_{x\in \mathcal{S}} \sup_{|t| < \delta}
|N^{n}_{s,it}(\varphi)(x)|
\leq 
\sup_{|t| < \delta}
\|N^{n}_{s,it}\|_{\mathcal B_{\gamma} \to \mathcal B_{\gamma}} \| \varphi \|_{\gamma}  
\leq  c_{\delta} \varkappa^n \| \varphi \|_{\gamma}.  
\end{align*}
Combining this with the continuity of the function $\psi$ at the point $0$ and the fact
$|e^{-i t l n}| = 1$, we obtain that,
uniformly in $|l| \leq l_n$, $x \in \mathcal{S}$ and $\varphi \in \mathcal{B}_{\gamma}$, 
\begin{align}  \label{SaddleIntegral 2}
J_2(n)
\leq C_{\delta}  e^{- c_{\delta} n} \| \varphi \|_{\gamma}. 
\end{align}
For the first term $J_1(n)$, 
we shall use the method of steepest descends to derive a precise asymptotic expansion.
We make a change of variable $z = i t$ to rewrite $J_1(n)$ as an integral over the 
complex interval $L_0=(-i\delta,i\delta):$
\begin{align} \label{SaddleInte J21}
J_{1}(n) 
=   
\Big| -i \sqrt{n}
\  e^{nh_s(l) } 
\int_{- i \delta}^{i \delta} e^{n (K_s(z) - zl)} \psi_{s,x}(-iz) dz - c_s(\psi)
\Big|, 
\end{align}
where $K_s(z)=\log \lambda_{s,z}$ (we choose the branch where $K_s(0)=0$), 
which is an analytic function for $|z| \leq \delta$ 
by Proposition \ref{perturbation thm} (iii).
Since the function $z \mapsto e^{n (K_s(z) - zl)}$ is analytic in the neighborhood of $0$,
and the function $z \mapsto \psi_{s,x}(-iz)$ has an analytic extension in the domain 
$D_{\delta} : = \{ z \in \mathbb{C}: |z| < \delta, \Im z \neq 0 \}$ and has a continuous extension in the domain
$\overline{D}_{\delta} : = \{ z \in \mathbb{C}: |z| \leq \delta \}$, 
by Cauchy's integral theorem 
we can choose a special path of the integration
which passes through the saddle point of the function $K_s(z) - zl$. 
From \eqref{relationlamkappa001}, 
we have
$$K_s(z) = -qz + \log \kappa(s+z) - \log \kappa(s),$$
which implies that for $|z| < \delta$,
\begin{align} \label{Expan Ks 01}
K_s(z) = \sum_{k=2}^{\infty} \gamma_{s,k} \frac{z^k}{k!},
\end{align}
where $\gamma_{s,k} = \Lambda^{(k)}(s)$ and $\Lambda(s) = \log \kappa(s)$. 
From this Taylor's expansion 
and the fact that $\Lambda^{(2)}(s)= \sigma_s^2>0$, 
it follows that the function $K_s(z) - zl$ is convex in the neighborhood of $0$. 
Consider the saddle point equation
\begin{align} \label{SaddleEqu}
K_s'(z)-l =0.
\end{align}
An equivalent formulation of \eqref{SaddleEqu} is 
$l = \sum_{k=2}^{\infty} \gamma_{s,k} \frac{z^{k-1}}{(k-1)!}$, which
by simple series inversion techniques gives the following solution:
\begin{align}  \label{SoluSaddleEqu}
z_0=z_0(l) := \frac{l}{ \gamma_{s,2} } - \frac{ \gamma_{s,3} }{ 2 \gamma_{s,2}^3 } l^2 
- \frac{ \gamma_{s,4} \gamma_{s,2} -  3 \gamma_{s,3}^2 }{6 \gamma_{s,2}^5 } l^3 + \cdots.  
\end{align}
From \eqref{SoluSaddleEqu}, 
it follows that the solution $z_0=z_0(l)$ is real for sufficiently small $l$
and that $z_0=z_0(l)\to 0$ as $l\to 0.$
Moreover, $z_0>0$ for sufficiently small $l>0$, and $z_0< 0$ for sufficiently small $l<0$. 
By Cauchy's integral theorem, $J_{1}(n)$ can be rewritten as 
\begin{align*}
J_{1}(n) =  
\Big| -i \sqrt{n}
\  e^{n h_s(l) } 
\Big\{\int_{L_1} + \int_{L_2} + \int_{L_3} \Big\} e^{n (K_s(z) - zl)} \psi_{s,x}(-iz) dz - c_s(\psi)
\Big|, 
\end{align*}
where $L_1 = (-i \delta, z_0 - i \delta)$, 
$L_2 = (z_0 - i \delta, z_0 + i \delta)$ and $L_3 = (z_0 + i \delta,  i \delta)$.
By \eqref{Expan Ks 01}, we get $K_s(it) = -\frac{1}{2} \sigma_s^2 t^2 + O(t^3)$,
which implies that $|e^{n K_s(it)}| \leq e^{-\frac{n}{3} \sigma_s^2 t^2  }$, when $t$ is sufficiently small.
Combining this with \eqref{SoluSaddleEqu} and the continuity of $K_s(z)$ in the neighborhood of $0$ 
yields that, for sufficiently small $l$, $|e^{n K_s(z)}| \leq e^{-\frac{n}{4} \sigma_s^2 \delta^2  }$,
for any $z \in L_1 \cup L_3$.
Since, for sufficiently small $l$, $l z_0>0$, we get that, 
for $z \in L_1 \cup L_3$, 
$|e^{- nzl}| = |e^{-nl z_0}| \leq 1$.  
Moreover, using the continuity of the function $z \mapsto \psi_{s,x}(-iz)$ 
in a small neighborhood of $0$ in the complex plane, 
there exists a constant $C_s>0$ such that, 
on $L_1$ and $L_3$, we have $\sup_{x \in \mathcal{S}}|\psi_{s,x}(-iz)| \leq C_s \| \varphi \|_{\gamma}$. 
Therefore, we obtain, 
for $n$ sufficiently large,  
uniformly in $|l|\leq l_n$ and $x\in \mathcal{S}$, 
\begin{align*}
\Big| -i \sqrt{n} ~
e^{n h_s(l)} 
\Big\{\int_{L_1}  + \int_{L_3} \Big\} e^{n (K_s(z) - zl)} \psi_{s,x}(-iz) dz
\Big|
\leq  O(e^{-\frac{n}{5} \sigma_s^2 \delta^2 }) \| \varphi \|_{\gamma}.
\end{align*}
It follows that 
\begin{align*}
J_{1}(n) \leq & \   
\Big| -i \sqrt{n} ~
e^{n h_s(l) } 
\int_{z_0 - i \delta}^{z_0 + i \delta} e^{n (K_s(z) - zl)} \psi_{s,x}(-iz) dz - c_s(\psi)
\Big|  \nonumber\\
& \   +  O(e^{-\frac{n}{5} \sigma_s^2 \delta^2 }) \| \varphi \|_{\gamma}. 
\end{align*}
Without loss of generality, assume that $n\geq 3$.
Making a change of variable $z= z_0 + it$ gives
\begin{align}  \label{SaddleIntegral}
&J_{1}(n) \nonumber 
\leq   
\Big| \sqrt{n} ~
e^{n h_s(l)} 
\int_{-\delta}^{\delta} e^{n [K_s(z_0 + it)-(z_0 + it)l ]} \psi_{s,x}(t-iz_0) dt - c_s(\psi)
\Big| \nonumber\\
&\qquad\qquad\qquad\qquad\qquad\qquad\qquad\qquad\qquad\qquad
   + O(e^{-\frac{n}{5} \sigma_s^2 \delta^2 }) \| \varphi \|_{\gamma}  \nonumber\\
&\leq  
\Big| \sqrt{n} \,  
e^{n h_s(l)} 
\int_{n^{-\frac{1}{2}} \log n \leq |t|< \delta}
e^{n [K_s(z_0 + it)-(z_0 + it)l ]} \psi_{s,x}(t-iz_0) dt \Big|  \nonumber\\
& \ \  \  + 
\Big| \sqrt{n} 
e^{n h_s(l)} \int_{|t|< n^{-\frac{1}{2}} \log n }  
e^{n [K_s(z_0 + it)-(z_0 + it)l ]} \psi_{s,x}(t-iz_0) dt - c_s(\psi)
\Big|     \nonumber\\ 
& \  \  \   + O(e^{-\frac{n}{5} \sigma_s^2 \delta^2 }) \| \varphi \|_{\gamma}.
\end{align}
From \eqref{SaddleEqu} and \eqref{SoluSaddleEqu}, we have $K_s'(z_0)=l$.
By Taylor's formula, we get that for $|t|< \delta$, 
\begin{align*}
K_s(z_0 + it)-(z_0 + it)l = K_s(z_0) - z_0l + \sum_{k=2}^{\infty} \frac{K_s^{(k)}(z_0) (it)^k }{k!}. 
\end{align*}
Using $K_s'(z_0)=l$ and \eqref{Expan Ks 01}, it follows that
$$K_s(z_0) - z_0 l = K_s(z_0) - z_0 K_s'(z_0) = -\sum_{k=2}^{\infty}\frac{k-1}{k!} \gamma_{s,k} z_0^k. $$
Combining this with \eqref{SoluSaddleEqu} and Lemma \ref{lemmaCR001} gives
$K_s(z_0) - z_0 l = -h_s(l)$.
Thus
\begin{align}\label{Rela Ks hs}
K_s(z_0 + it)-(z_0 + it)l = -h_s(l) + \sum_{k=2}^{\infty} \frac{K_s^{(k)}(z_0) (it)^k }{k!}. 
\end{align}
Since $K_s''(z_0) = \sigma_s^2 + O(z_0) > \frac{1}{2} \sigma_s^2 $, for small enough $z_0$, $\delta$ and $l$, 
we obtain that 
$\Re (\sum_{k=2}^{\infty} \frac{K_s^{(k)}(z_0) (it)^k }{k!}) < -\frac{1}{8} \sigma_s^2 t^2 $.
Therefore, using \eqref{Rela Ks hs} and the fact that uniformly in $x \in \mathcal{S}$, 
the function $z \mapsto \psi_{s,x}(z)$ is continuous in a neighborhood of $0$ in the complex plane, 
we obtain that, uniformly in $x\in \mathcal{S}$ and $|l|\leq l_n$, 
\begin{align*}
& \  
\Big| \sqrt{n} ~
e^{n h_s(l)} 
\int_{n^{-\frac{1}{2}} \log n \leq |t|< \delta}
e^{n [K_s(z_0 + it)-(z_0 + it)l ]} \psi_{s,x}(t-iz_0) dt 
\Big|  \\
\leq & \  c_1 \sqrt{n} 
\int_{n^{-\frac{1}{2}} \log n \leq |t|< \delta} e^{-\frac{1}{8} n \sigma_s^2 t^2} dt \| \varphi \|_{\gamma}
= O(e^{-c \log^2 n}) \| \varphi \|_{\gamma}. 
\end{align*}
This, together with \eqref{SaddleIntegral}-\eqref{Rela Ks hs}, implies
\begin{align*}
J_{1}(n) &\leq  \sup_{x\in \mathcal{S}}
\Big| \sqrt{n}
  \int_{|t|< n^{-\frac{1}{2}} \log n }  
e^{n \sum_{k=2}^{\infty} \frac{K_s^{(k)}(z_0) (it)^k }{k!} } \psi_{s,x}(t-iz_0) dt - c_s(\psi)
\Big|    \nonumber\\
&\quad   
    + O(e^{-c \log^2 n}) \| \varphi \|_{\gamma}. 
\end{align*}
Noting that $\Pi_{s,0}(\varphi)(x)=\pi_{s}(\varphi)$ and $\psi_{s,x}(0) = \psi(0) \pi_s(\varphi)$, 
we write
\begin{align}  \label{SaddleIntegral J2}
J_1(n) \leq & \   \sup_{x\in \mathcal{S}}
\Big| \sqrt{n}
 \int_{|t|< n^{-\frac{1}{2}} \log n } 
\Big( e^{n \sum_{k=2}^{\infty} \frac{K_s^{(k)}(z_0) (it)^k }{k!} } - e^{-\frac{n\sigma_s^2 t^2}{2} } \Big)
 \psi_{s,x}(t-iz_0) dt
\Big|  \nonumber\\ 
&\ +  
\sup_{x\in \mathcal{S}}  \Big| \sqrt{n} \int_{|t|< n^{-\frac{1}{2}} \log n }  e^{-\frac{n\sigma_s^2 t^2}{2}} 
\big[ \psi_{s,x}(t-iz_0) - \psi_{s,x}(0) \big]  dt
\Big|     \nonumber\\
&\ +    \sqrt{n} \psi(0) \pi_s(\varphi) \int_{|t| \geq  n^{-\frac{1}{2}} \log n } e^{-\frac{n\sigma_s^2 t^2}{2}} dt
 +   O(e^{-c \log^2 n}) \| \varphi \|_{\gamma} \nonumber\\
= & \    J_{11}(n) + J_{12}(n) + J_{13}(n) + O(e^{-c \log^2 n}) \| \varphi \|_{\gamma}.
\end{align}
We give a control of $J_{11}(n)$. 
Note that $| \psi_{s,x}(t-iz_0) |$ is bounded by $C_s \| \varphi \|_{\gamma}$, 
uniformly in $|t|< n^{-\frac{1}{2}} \log n$. 
Note also that for $|t|< n^{-\frac{1}{2}} \log n$ and for large enough $n$, we have
$| e^{ n \Re{ \sum_{k=3}^{\infty} \frac{K^{(k)}(z_0) (it)^k }{k!} } }| \leq  e^{ c n t^4} \leq C$. 
Hence using the inequality $|e^{z} - 1| \leq e^{\Re{z}} |z|$ yields
\begin{align}\label{SaddleIntegral J22}
J_{11}(n) \leq   C_s \| \varphi \|_{\gamma} \sqrt{n} \int_{|t|< n^{-\frac{1}{2}} \log n }  
     e^{-\frac{n\sigma_s^2 t^2}{2} }  n |t|^3 dt  
     \leq  \frac{C_s}{\sqrt{n}} \| \varphi \|_{\gamma}. 
\end{align}
Now we  control $J_{12}(n)$. 
Recalling that $z_0=z_0(l) \leq c_s l_n$, 
using the fact that uniformly with respect to $x \in \mathcal{S}$, 
the map $z\mapsto\psi_{s,x}(z)$ is continuous in the neighborhood of $0$ in the complex plane,
we get that for $|t| \leq n^{-\frac{1}{2}} \log n$, 
$$\sup_{x \in \mathcal{S}} | \psi_{s,x}(t-iz_0) - \psi_{s,x}(0) | 
 < c_s (n^{-\frac{1}{2}} \log n + l_n) \| \varphi \|_{\gamma}.$$ 
We then obtain
\begin{align*} 
J_{12}(n) \leq  c_s (n^{-\frac{1}{2}} \log n + l_n) \| \varphi \|_{\gamma}.
\end{align*}
It is easy to see that $J_{13}(n) \leq C \| \varphi \|_{\gamma} e^{ - c_s \log^2 n}$. 
This, together with \eqref{SaddleIntegral J2}-\eqref{SaddleIntegral J22}, 
proves that $J_1(n) \leq c_s (n^{-\frac{1}{2}} \log n + l_n) \| \varphi \|_{\gamma}.$ 
The desired result follows by combining this with \eqref{Thm1 integral1 J}-\eqref{SaddleIntegral 2}.
\end{proof}

Assume that the functions $\varphi$ and $\psi$ satisfy the same properties as in Proposition \ref{Prop Rn limit1}.
The following result, for $s<0$ small enough, will be used to prove Theorem \ref{Thm-Neg-s}. 

\begin{proposition} \label{Prop Rn limit2}
Assume conditions of Theorem \ref{Thm-Neg-s}. 
Then, there exists $\eta_0 < \eta$ such that for any $s \in (-\eta_0, 0)$, $q=\Lambda'(s)$ and  for any 
positive sequence $(l_n)_{n \geq 1}$ satisfying $l_n \to 0$ as $n \to \infty$, 
we have, uniformly in $x\in \mathcal{S}$, $|l|\leq l_n $ and $\varphi \in \mathcal{B}_{\gamma}$, 
\begin{align*} 
& \Big| \sqrt{n}  \   \sigma_s   e^{n h_s(l)}
\int_{\mathbb R} e^{-it l n} R^{n}_{s,it}(\varphi)(x) \psi (t) dt
- \sqrt{2\pi} \psi(0)\pi_{s}(\varphi)
\Big|  \nonumber\\
\leq  &\  C \| \varphi \|_\gamma  \Big( \frac{ \log n }{ \sqrt{n} } + l_n \Big). 
\end{align*}
\end{proposition}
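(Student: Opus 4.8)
The statement to be proved coincides verbatim with that of Proposition \ref{Prop Rn limit1}; only the standing hypotheses differ. Accordingly, the plan is to reproduce the proof of Proposition \ref{Prop Rn limit1} line by line, replacing every appeal to Proposition \ref{perturbation thm} by the corresponding assertion of Proposition \ref{perturbation thm nrgztive s}, and every appeal to \eqref{basic equ1} by its extension to $s<0$ recorded after Proposition \ref{transfer operator s negative}. Before running the argument I would record that, for $s\in(-\eta_0,0)$, Proposition \ref{perturbation thm nrgztive s} still provides all the structural inputs used in the positive case: the spectral decomposition $R^{n}_{s,it}=\lambda^{n}_{s,it}\Pi_{s,it}+N^{n}_{s,it}$ on $|t|<\delta$; the rank-one projection $\Pi_{s,it}$ with $\Pi_{s,0}(\varphi)(x)=\pi_{s}(\varphi)$ and the geometric bound $\sup_{|t|<\delta}\|N^{n}_{s,it}\|_{\mathcal B_\gamma\to\mathcal B_\gamma}\leq c_s\varkappa_s^n$; the off-zero decay $\sup_{t\in K}\sup_{x}|R^{n}_{s,it}\varphi(x)|\leq e^{-nC_K}\|\varphi\|_{\infty}$ for compact $K\subseteq\mathbb R\setminus\{0\}$; and the local expansion $\lambda_{s,z}=1+\tfrac{\sigma_s^2}{2}z^2+\tfrac{\Lambda'''(s)}{6}z^3+o(z^3)$ with $\sigma_s^2=\Lambda''(s)>0$, the latter positivity holding for $s<0$ close to $0$ by condition \ref{CondiMoment} together with the non-arithmeticity implied by \ref{A1} (resp.\ assumed in \ref{A3}), exactly as for $s>0$. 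Finally, under \ref{CondiMoment} the function $\kappa$ extends analytically across $0$, so $K_s(z):=\log\lambda_{s,z}=-qz+\log\kappa(s+z)-\log\kappa(s)$ still holds, giving $K_s(z)=\sum_{k\geq 2}\gamma_{s,k}z^k/k!$ with $\gamma_{s,k}=\Lambda^{(k)}(s)$, which is the only analytic input the saddle point analysis needs.

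Granting these ingredients, I would proceed exactly as in the proof of Proposition \ref{Prop Rn limit1}. Split $\int_{\mathbb R}e^{-itln}R^{n}_{s,it}(\varphi)(x)\psi(t)dt$ at $|t|=\delta$; on $|t|\geq\delta$ the integrand decays geometrically by Proposition \ref{perturbation thm nrgztive s}(ii) and the compact support of $\psi$, contributing $O(e^{-c_\delta n})\|\varphi\|_\gamma$. On $|t|<\delta$ insert the spectral decomposition; the $N^{n}_{s,it}$ part is $O(\varkappa_s^n)\|\varphi\|_\gamma$ by the operator-norm bound and the continuity of $\psi$ at $0$. The leading term $\sqrt n\,e^{nh_s(l)}\int_{|t|<\delta}e^{-itln}\lambda^{n}_{s,it}\Pi_{s,it}(\varphi)(x)\psi(t)dt$ is handled by steepest descent: put $z=it$, and since $z\mapsto e^{n(K_s(z)-zl)}$ is analytic near $0$ and $z\mapsto\Pi_{s,z}(\varphi)(x)\psi(-iz)$ is analytic on $D_\delta$ and continuous on $\overline D_\delta$, Cauchy's theorem lets us shift the contour through the saddle point $z_0=z_0(l)$ solving $K_s'(z)=l$. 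By \eqref{SoluSaddleEqu} and $\sigma_s^2>0$, $z_0$ is real with $z_0\to 0$ and $lz_0>0$ for small $l$, so the two side segments contribute $O(e^{-cn})\|\varphi\|_\gamma$ and, by Lemma \ref{lemmaCR001} (valid for $s\in(-\eta_0,0)$), $K_s(z_0)-z_0l=-h_s(l)$, which cancels the prefactor $e^{nh_s(l)}$.

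It then remains to analyse the central segment $z=z_0+it$, $|t|<n^{-1/2}\log n$, where $K_s(z_0+it)-(z_0+it)l=-h_s(l)+\sum_{k\geq 2}K_s^{(k)}(z_0)(it)^k/k!$ with $K_s''(z_0)=\sigma_s^2+O(z_0)>\tfrac12\sigma_s^2$; the tail $n^{-1/2}\log n\leq|t|<\delta$ is $O(e^{-c\log^2 n})\|\varphi\|_\gamma$. A Gaussian comparison closes the estimate: replacing $e^{n\sum_{k\geq 3}K_s^{(k)}(z_0)(it)^k/k!}$ by $1$ costs $O(n^{-1/2})\|\varphi\|_\gamma$; replacing $\Pi_{s,z_0+it}(\varphi)(x)\psi(-i(z_0+it))$ by its value $\psi(0)\pi_s(\varphi)$ at $t=0$, $z_0=0$ costs $O(n^{-1/2}\log n+l_n)\|\varphi\|_\gamma$ since $|z_0|\leq c_sl_n$ and $z\mapsto\Pi_{s,z}(\varphi)(x)$ is continuous at $0$ uniformly in $x$; extending the Gaussian integral to all of $\mathbb R$ costs $O(e^{-c\log^2 n})$; and the remaining integral equals $\sqrt{2\pi/(n\sigma_s^2)}\,\psi(0)\pi_s(\varphi)$, so multiplication by $\sqrt n\,\sigma_s$ yields $\sqrt{2\pi}\,\psi(0)\pi_s(\varphi)$ up to the asserted error $C\|\varphi\|_\gamma\bigl(n^{-1/2}\log n+l_n\bigr)$. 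The only genuine point requiring care is that none of the perturbation-theoretic facts used in the case $s>0$ degrades for $s<0$ — analyticity of $z\mapsto\lambda_{s,z},\Pi_{s,z},N_{s,z}$, the relation $K_s(z)=-qz+\log\kappa(s+z)-\log\kappa(s)$, the positivity of $\sigma_s^2$, and the off-zero decay; all of these are furnished by Proposition \ref{perturbation thm nrgztive s} and the analytic continuation of $\kappa$ under condition \ref{CondiMoment}. Hence the argument goes through with purely notational changes, and the repetition of the details may be omitted.
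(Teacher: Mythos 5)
Your proposal is correct and matches the paper's treatment exactly: the paper likewise proves Proposition \ref{Prop Rn limit2} by invoking Propositions \ref{transfer operator s negative} and \ref{perturbation thm nrgztive s} and repeating the argument of Proposition \ref{Prop Rn limit1} verbatim, omitting the details. Your verification that each ingredient of the saddle point analysis (spectral decomposition, off-zero decay, analyticity of $\kappa$ and $K_s$, positivity of $\sigma_s^2$, and the sign $lz_0>0$) survives for $s\in(-\eta_0,0)$ is precisely the check the paper leaves implicit.
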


\begin{proof}
Using Propositions \ref{transfer operator s negative} and \ref{perturbation thm nrgztive s},
the proof of Proposition \ref{Prop Rn limit2} can be carried out as 
the proof of Proposition \ref{Prop Rn limit1}. We omit the details. 
\end{proof}


\subsection{Proof of  Theorem \ref{main theorem1} } \label{proof T21}
Recall that
$q=\Lambda'(s)$, $\Lambda^*(q+l)=\Lambda^{*}(q) + sl + h_s(l)$, 
$x\in \mathcal{S},$  and $|l|\leq l_n \to 0$, 
as $n \to \infty$.
Taking into account that $e^{n\Lambda^{*}(q)}=e^{sqn}/\kappa^{n}(s)$
and using the change of measure \eqref{basic equ1}, 
we write
\begin{align} \label{Prop change measure1}
&A_n(x,l) := \sqrt{2\pi n}~s\sigma_s  
e^{n \Lambda^*(q+l)}
\frac{1}{r_s(x)}\mathbb{P}(\log |G_n x|\geq n(q+l)) \nonumber\\
= &
\sqrt{2\pi n} \, s\sigma_s e^{nsl}e^{n h_s(l)} e^{sqn}\mathbb{E}_{\mathbb{Q}_{s}^{x}}
\Big( \frac{1}{r_{s}(X^x_{n})}e^{-s \log |G_n x| } \mathbbm{1}_{\{ \log |G_nx| \geq n(q+l) \} } \Big). 
\end{align}
Setting $T_{n}^x= \log |G_n x|  
-nq$ 
and $\psi_s(y)=e^{-sy}\mathbbm{1}_{\{y\geq 0\}}$, 
from  \eqref{Prop change measure1} we get
\begin{eqnarray} \label{change measure equ1}
A_n(x,l)=\sqrt{2\pi n}~s\sigma_s e^{n h_s(l)}
\mathbb{E}_{\mathbb{Q}_{s}^{x}} \left(\frac{1}{r_{s}(X_{n}^x)}\psi_s(T_{n}^x-nl)\right).
\end{eqnarray}
\textit{Upper bound.} 
Let $\varepsilon\in(0,1)$ 
and
${\psi}^+_{s,\varepsilon}(y) = \sup_{y'\in\mathbb{B}_{\varepsilon}(y)} \psi_s(y')$
be  defined as in \eqref{smoo001} but with $\psi_s$ instead of  $\psi$. 
Using Lemma \ref{estimate u convo} leads to
\begin{align}\label{Thm1 upper1}
A_n(x,l)
&\leq (1+ C_{\rho}(\varepsilon))
\sqrt{2\pi n}~s\sigma_s 
e^{n h_s(l) }
\mathbb{E}_{\mathbb{Q}_{s}^{x}}
\left[\frac{1}{r_{s}(X_{n}^x)}
({\psi}^+_{s,\varepsilon}\!\ast\!\rho_{\varepsilon^2})
(T_{n}^x-nl)\right]
\nonumber \\
&=: B_n^+(x,l).
\end{align} 
Denote by 
$\widehat{{\psi}}^+_{s,\varepsilon}$ the Fourier transform of ${\psi}^+_{s,\varepsilon}$.
Elementary calculations give
\begin{align}  \label{Thm1 estamite1 u01}
\sup_{t\in \mathbb R} |\widehat \psi^+_{s,\varepsilon}(t)|
\leq
\widehat \psi^+_{s,\varepsilon}(0) 
= 
\int_{-\varepsilon}^{\varepsilon} dy
+
\int_{\varepsilon}^{+\infty} e^{-s(y-\varepsilon)} dy 
=\frac{1+2s\varepsilon}{s}.
\end{align}
By the inversion formula, for any $y\in \mathbb R,$ 
$$
{\psi}^+_{s,\varepsilon}\!\ast\!\rho_{\varepsilon^{2}}(y)
=\frac{1}{2\pi}\int_{\mathbb{R}}e^{ity}
\widehat {\psi}^+_{s,\varepsilon}(t) \widehat\rho_{\varepsilon^{2}}(t)dt.
$$
Substituting $y=T_{n}^x-nl$, taking expectation with respect to $\mathbb{E}_{\mathbb{Q}_{s}^{x}}$,
and using Fubini's theorem, we get
\begin{equation} \label{Fubini1}
\mathbb{E}_{\mathbb{Q}_{s}^{x}}
\Big[ \frac{1}{ r_{s}(X_{n}^x) }
( {\psi}^+_{s,\varepsilon} \!\ast\! \rho_{\varepsilon^{2}} ) (T_{n}^x-nl) \Big]
= \frac{1}{2\pi}
\int_{\mathbb{R}} e^{-itln} R^{n}_{s,it}(r_{s}^{-1})(x)
\widehat {\psi}^+_{s,\varepsilon}(t) \widehat\rho_{\varepsilon^{2}}(t) dt,
\end{equation}
where
\begin{equation}
R^{n}_{s,it}(r_{s}^{-1})(x)
=\mathbb{E}_{\mathbb{Q}_{s}^{x}}\left[e^{it T_{n}^x}\frac{1}{r_{s}(X_{n}^x)}\right].  \nonumber
\end{equation}
Note that $\widehat {\psi}^+_{s,\varepsilon} \widehat\rho_{\varepsilon^{2}}$ 
is compactly supported in $\mathbb{R}$ 
since $\widehat\rho_{\varepsilon^{2}}$ has a compact support. 
One can verify that $\widehat {\psi}^+_{s,\varepsilon}$ has an analytic extension 
in a neighborhood of $0$. By Lemma \ref{LemAnalyExten}, we see that 
the function $\widehat\rho_{\varepsilon^{2}}$ has a continuous extension in the complex plane,
and has an analytic in the domain 
$D_{\varepsilon^2} : = \{ z \in \mathbb{C}: |z| < \varepsilon^2, \Im z \neq 0 \}$. 
Using Proposition \ref{Prop Rn limit1}
with $\varphi=r_s^{-1}$ and $\psi=\widehat {\psi}^+_{s,\varepsilon} \widehat\rho_{\varepsilon^{2}}$,  
it follows that 
\begin{align} \label{Thm1 identity upper 1}
\lim_{n\to\infty}\sup_{x\in \mathcal{S}}\sup_{|l|\leq l_n }
\left| 
B_n^+(x,l)
-(1+ C_{\rho}(\varepsilon)) \pi_{s}(r_{s}^{-1}) s \widehat{\psi}^+_{s,\varepsilon}(0)
\widehat{\rho}_{\varepsilon^2}(0) 
\right| =0.
\end{align}
Since $\widehat{\rho}_{\varepsilon^2}(0)=1$,
from \eqref{change measure equ1}-\eqref{Thm1 identity upper 1}, 
we have that 
for sufficiently small $\varepsilon\in (0,1)$, 
\begin{align*} 
\limsup_{n\to\infty}\sup_{x\in \mathcal{S}}\sup_{|l|\leq l_n }
A_n(x,l) 
&\leq  (1+ C_{\rho}(\varepsilon))
s\pi_{s}(r_{s}^{-1})\widehat{\psi}^+_{s,\varepsilon}(0)\widehat{\rho}_{\varepsilon}(0) 
\\
&\leq (1+ C_{\rho}(\varepsilon)) (1+ 2s\varepsilon)\pi_{s}(r_{s}^{-1}).
\end{align*}
Letting $\varepsilon\to 0$ and noting that $C_{\rho}(\varepsilon) \to 0$, 
we obtain the upper bound: 
\begin{align} \label{Thm1 upper bound}
\limsup_{n\to\infty}\sup_{x\in \mathcal{S}}\sup_{|l|\leq l_n }
A_n(x,l)
\leq \pi_{s}(r_{s}^{-1}) = \frac{1}{ \nu_s(r_s) }.
\end{align}
\textit{Lower bound.} 
For $\varepsilon\in(0,1)$,
let
${\psi}^-_{s,\varepsilon}(y) = \inf_{y'\in\mathbb{B}_{\varepsilon}(y)} \psi_s(y')$
be defined as in \eqref{smoo001} with $\psi_s$ instead of  $\psi$.
From \eqref{change measure equ1} and Lemma \ref{estimate u convo}, we get
\begin{align} \label{Lowerbound An 1}
A_n(x,l) \geq & \
\sqrt{2\pi n}~s\sigma_s e^{n h_s(l) }
\mathbb{E}_{\mathbb{Q}_{s}^{x}}
\left[\frac{1}{r_{s}(X_{n}^x)}({\psi}^-_{s,\varepsilon}\!\ast\!\rho_{\varepsilon^2})(T_{n}^x-nl)\right]   \nonumber\\
& \ -  
\sqrt{2\pi n}~s\sigma_s e^{n h_s(l) }
\int_{|y|\geq \varepsilon}
\mathbb{E}_{\mathbb{Q}_{s}^{x}}
\left[\frac{1}{r_{s}(X_{n}^x)} {\psi}^-_{s,\varepsilon} (T_{n}^x-nl-y)\right]
\rho_{\varepsilon^2}(y)dy   \nonumber\\
: =  & \ B_n^-(x,l)- C_n^-(x,l). 
\end{align}
For the first term $B_n^-(x,l)$, 
applying \eqref{Fubini1} with ${\psi}^+_{s,\varepsilon} \rho_{\varepsilon^{2}}$ 
replaced by ${\psi}^-_{s,\varepsilon} \rho_{\varepsilon^{2}}$, we get
\begin{align*}  
B_n^-(x,l) = 
\sqrt{\frac{n}{2\pi} }~s\sigma_s e^{n h_s(l) }
\int_{\mathbb{R}} e^{-itln} R^{n}_{s,it}(r_{s}^{-1})(x)
\widehat {\psi}^-_{s,\varepsilon}(t) \widehat\rho_{\varepsilon^{2}}(t) dt.  
\end{align*}
In the same way as for the upper bound, using 
$
\widehat \psi^-_{s,\varepsilon}(0) 
= \frac{e^{-2s\varepsilon}}{s}
$
and Proposition \ref{Prop Rn limit1}
with $\varphi=r_s^{-1}$ and $\psi=\widehat {\psi}^-_{s,\varepsilon} \widehat\rho_{\varepsilon^{2}}$
(one can check that the functions $\varphi$ and $\psi$ 
satisfy the required conditions in Proposition \ref{Prop Rn limit1}), 
we obtain the lower bound: 
\begin{align} \label{Thm1 lower bound}
\liminf_{n\to\infty}\sup_{x\in \mathcal{S}}\sup_{|l|\leq l_n }
B_n^-(x,l)
\geq \pi_{s}(r_{s}^{-1}) = \frac{1}{ \nu_s(r_s) }.
\end{align}
For the second term $C_n^-(x,l)$, noting that $\psi^-_{s,\varepsilon} \leq \psi_s$ 
and applying Lemma \ref{estimate u convo} to $\psi_s$, we get
$\psi^-_{s,\varepsilon} \leq \psi_s 
\leq (1+ C_{\rho}(\varepsilon)){\psi}_{s,\varepsilon}^+ \! \ast \! \rho_{\varepsilon^2}$. 
We use the same argument as in \eqref{Fubini1} to obtain
\begin{align*}
C_n^-(x,l)
& \leq   (1+ C_{\rho}(\varepsilon))
\sqrt{2\pi n}~s\sigma_s e^{n h_s(l) }\\
&\ \quad \times \int_{|y|\geq \varepsilon}
\mathbb{E}_{\mathbb{Q}_{s}^{x}}
\left[\frac{1}{r_{s}(X_{n}^x)}
({\psi}_{s,\varepsilon}^+  \ast  \rho_{\varepsilon^2}) 
(T_{n}^x-nl-y)\right]
\rho_{\varepsilon^2}(y)
dy   \nonumber\\
& =  (1+ C_{\rho}(\varepsilon))
\sqrt{\frac{n}{2 \pi} }~s\sigma_s e^{n h_s(l) }
\\ 
&\ \quad  \times
\int_{|y|\geq \varepsilon}
\left(  \int_{\mathbb{R}} e^{-it(ln+y)} R^{n}_{s,it}(r_{s}^{-1})(x)
\widehat {\psi}^+_{s,\varepsilon}(t) 
\widehat\rho_{\varepsilon^{2}}(t)
dt \right)\rho_{\varepsilon^2}(y)
dy.
\end{align*}
Notice that, from Lemma \ref{lemmaCR001}, 
for any fixed $y \in \mathbb{R}$, 
it holds, uniformly in $l$ satisfying $|l| \leq l_n$,  that $e^{nh_s(l)-nh_s(l+\frac{y}{n})} \to 1$ as $n \to \infty$.
Applying Proposition \ref{Prop Rn limit1} again
with $\varphi=r_s^{-1}$, $\psi=\widehat \psi^+_{s,\varepsilon}\widehat\rho_{\varepsilon^{2}}$,
and using the Lebesgue dominated convergence theorem, 
we obtain
\begin{align*}
&\limsup_{n\to\infty}\sup_{x\in \mathcal{S}}\sup_{|l|\leq l_n }
C_n^-(x,l) \leq  (1+ C_{\rho}(\varepsilon)) 
s \pi_{s}(r_{s}^{-1}) \widehat \psi^+_{s,\varepsilon}(0) \widehat\rho_{\varepsilon^{2}}(0)
\int_{|y| \geq \varepsilon} \rho_{\varepsilon^2}(y)dy  \nonumber\\
&\qquad\qquad =   (1+ C_{\rho}(\varepsilon)) \pi_{s}(r_{s}^{-1}) (1+2s \varepsilon) \int_{|y|\geq \frac{1}{\varepsilon}} \rho(y)dy
\to  0, \quad  \mbox{as}  \  \varepsilon \to 0,
\end{align*}
since $\rho$ is integrable on $\mathbb{R}$.
This, together with \eqref{Lowerbound An 1}-\eqref{Thm1 lower bound}, implies the lower bound: 
\begin{align} \label{lower bound An 001}
\liminf_{n\to\infty}\sup_{x\in \mathcal{S}}\sup_{|l|\leq l_n }
A_n(x,l)
\geq \pi_{s}(r_{s}^{-1}) = \frac{1}{ \nu_s(r_s) }, 
\end{align}
as required.
We conclude the proof of Theorem \ref{main theorem1} 
by combining \eqref{Thm1 upper bound} and \eqref{lower bound An 001}.

\subsection{Proof of  Theorem \ref{Thm-Neg-s}}
Since the change of measure formula can be extended for small $s<0$,
under the conditions of Theorem \ref{Thm-Neg-s}, 
we have, similar to \eqref{Prop change measure1},
\begin{align*}
&  -s \sigma_s \sqrt{2\pi n} \,   
e^{n \Lambda^*(q+l)}
\frac{1}{r_s(x)}\mathbb{P}(\log |G_n x|\leq n(q+l)) \nonumber\\
= &  -s \sigma_s
\sqrt{2\pi n} \,  e^{nsl}e^{n h_s(l)} e^{sqn}\mathbb{E}_{\mathbb{Q}_{s}^{x}}
\Big( \frac{1}{r_{s}(X^x_{n})}e^{-s \log |G_n x| } \mathbbm{1}_{\{ \log |G_nx| \leq n(q+l) \} } \Big). 
\end{align*}
Applying Proposition \ref{Prop Rn limit2}, 
we can follow the proof of Theorem \ref{main theorem1}
to show Theorem \ref{Thm-Neg-s}. We omit the details.


\section{Proof of Theorems \ref{main theorem3} and \ref{Thm-Neg-s-Target}}\label{sec proof of main theroem3}
We first establish the following assertion which will be used to prove Theorem \ref{main theorem3},
but which is of independent interest.
Let $\psi$ be a measurable function on $\mathbb{R}$ and $\varepsilon>0.$
Denote, for brevity, 
$\psi_s(y)=e^{-sy}\psi(y)$ and
$$\psi^+_{s,\varepsilon} (y)= \sup_{y'\in\mathbb{B}_{\varepsilon}(y)} \psi_s(y'),
\quad
\psi^-_{s,\varepsilon}(y)= \inf_{y'\in\mathbb{B}_{\varepsilon}(y)} \psi_s(y').$$
Introduce the following condition:
for any $s\in I_\mu^\circ$ and $\varepsilon>0,$ the functions
 $y\mapsto \psi^+_{s,\varepsilon}(y)$ 
 and  $y\mapsto \psi^-_{s,\varepsilon}(y)$ 
 are measurable and
\begin{align}\label{condition g}
\lim_{\varepsilon\to0^{+}}\int_{\mathbb{R}} \psi^+_{s,\varepsilon}(y)
dy
=\lim_{\varepsilon\to0^{+}}\int_{\mathbb{R}} \psi^-_{s,\varepsilon}(y)
dy 
=\int_{\mathbb{R}}e^{-sy}\psi(y)dy<+\infty.
\end{align}
\begin{theorem}  \label{main theorem2}
Suppose the assumptions of Theorem \ref{main theorem1} hold true.
Let $q=\Lambda'(s)$, where $s\in I_\mu^\circ$.
Assume that $\varphi$ is a H\"{o}lder continuous function on $\mathcal{S}$
and $\psi$ is a measurable function on $\mathbb{R}$ 
satisfying condition \eqref{condition g}. 
Then, for any positive sequence $(l_n)_{n \geq 1}$ satisfying $\lim_{n\to \infty}l_n = 0$, 
we have 
\begin{align}\label{Asy-s-Posi}
&\lim_{n\to\infty}  \sup_{x\in \mathcal{S}} \sup_{|l|\leq l_n }
\Bigg|
\sqrt{2\pi n} \, \sigma_s e^{n \Lambda^*(q+l)}  
\mathbb{E}  \Big[ \varphi(X_{n}^{x}) \psi( \log |G_n x|-n(q+l) ) \Big] \nonumber\\
&\quad\quad\quad\quad\quad\quad\quad\quad\quad\quad\quad\quad
- \bar r_{s}(x) \nu_s(\varphi) \int_{\mathbb{R}} e^{-sy} \psi(y) dy 
\Bigg| 
=0. 
\end{align}
\end{theorem}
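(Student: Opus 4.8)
The plan is to reduce the statement of Theorem \ref{main theorem2} to the fundamental estimate of Proposition \ref{Prop Rn limit1}, via a two-sided sandwich argument very similar to the proof of Theorem \ref{main theorem1}. First I would perform the change of measure \eqref{basic equ1}: writing $T_n^x = \log|G_n x| - nq$, $\psi_s(y) = e^{-sy}\psi(y)$, and using $e^{n\Lambda^*(q)} = e^{sqn}/\kappa^n(s)$ together with $\Lambda^*(q+l) = \Lambda^*(q) + sl + h_s(l)$, one obtains
\begin{align*}
\sqrt{2\pi n}\,\sigma_s e^{n\Lambda^*(q+l)} \mathbb{E}\big[\varphi(X_n^x)\psi(T_n^x - nl)\big]
= \sqrt{2\pi n}\,\sigma_s r_s(x) e^{n h_s(l)}\,
\mathbb{E}_{\mathbb{Q}_s^x}\Big[\tfrac{\varphi(X_n^x)}{r_s(X_n^x)}\,\psi_s(T_n^x - nl)\Big].
\end{align*}
Since $\bar r_s(x) = r_s(x)/\nu_s(r_s)$ and $\pi_s(\phi) = \nu_s(\phi r_s)/\nu_s(r_s)$, the target quantity $\bar r_s(x)\nu_s(\varphi)\int e^{-sy}\psi(y)dy$ corresponds, after dividing by $r_s(x)$, to $\pi_s(\varphi r_s^{-1})\int e^{-sy}\psi(y)dy$; this is exactly the shape produced by Proposition \ref{Prop Rn limit1} with the test function $\varphi r_s^{-1}$, which lies in $\mathcal{B}_\gamma$ because $r_s$ is strictly positive and Hölder continuous (Proposition \ref{transfer operator}).

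Next I would run the smoothing sandwich. Apply Lemma \ref{estimate u convo} to the (now possibly non-integrable but $e^{-sy}$-weighted) function, i.e.\ to $\psi_s$, obtaining upper and lower bounds in terms of $\psi^+_{s,\varepsilon} * \rho_{\varepsilon^2}$ and $\psi^-_{s,\varepsilon} * \rho_{\varepsilon^2}$ minus a tail term. Here condition \eqref{condition g} plays the role that integrability of $\psi_s$ played in Theorem \ref{main theorem1}: it guarantees $\psi^\pm_{s,\varepsilon}$ are integrable and that $\int \psi^\pm_{s,\varepsilon}(y)\,dy \to \int e^{-sy}\psi(y)\,dy$ as $\varepsilon\to 0$. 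For the upper bound, insert the Fourier inversion formula for $\psi^+_{s,\varepsilon} * \rho_{\varepsilon^2}$, take $\mathbb{E}_{\mathbb{Q}_s^x}$, apply Fubini as in \eqref{Fubini1} to get an integral $\frac{1}{2\pi}\int_{\mathbb R} e^{-itln} R^n_{s,it}(\varphi r_s^{-1})(x)\,\widehat{\psi}^+_{s,\varepsilon}(t)\,\widehat{\rho}_{\varepsilon^2}(t)\,dt$, observe that $\widehat{\psi}^+_{s,\varepsilon}\widehat{\rho}_{\varepsilon^2}$ is compactly supported with the required analyticity near $0$ (from Lemma \ref{LemAnalyExten} and the explicit form of $\psi^+_{s,\varepsilon}$), and invoke Proposition \ref{Prop Rn limit1} with $\varphi$ there equal to $\varphi r_s^{-1}$ and $\psi$ there equal to $\widehat{\psi}^+_{s,\varepsilon}\widehat{\rho}_{\varepsilon^2}$. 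This yields $\limsup_n \sup_{x,|l|\le l_n}(\cdots) \le \pi_s(\varphi r_s^{-1})\,\widehat{\psi}^+_{s,\varepsilon}(0)\,\widehat{\rho}_{\varepsilon^2}(0) = \pi_s(\varphi r_s^{-1})\int\psi^+_{s,\varepsilon}(y)\,dy$, which tends to $\pi_s(\varphi r_s^{-1})\int e^{-sy}\psi(y)\,dy$ as $\varepsilon\to 0$ by \eqref{condition g}. The lower bound is handled symmetrically with $\psi^-_{s,\varepsilon}$, with the extra tail term $C_n^-(x,l)$ controlled exactly as in Section \ref{proof T21}: bound $\psi^-_{s,\varepsilon} \le \psi_s \le (1+C_\rho(\varepsilon))\psi^+_{s,\varepsilon}*\rho_{\varepsilon^2}$, apply Proposition \ref{Prop Rn limit1} once more together with the dominated convergence argument (using $e^{nh_s(l) - nh_s(l+y/n)}\to 1$ from Lemma \ref{lemmaCR001}), and let $\varepsilon\to 0$ so the tail $\int_{|y|\ge 1/\varepsilon}\rho(y)\,dy$ vanishes.

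One subtlety I would be careful about, and which I expect to be the main obstacle, is the sign of $\varphi$ and the resulting direction of the inequalities in the sandwich. Lemma \ref{estimate u convo} requires a non-negative function; here $\psi$ (and $\varphi$) need not be non-negative. The remedy is to first reduce to non-negative $\psi$ by splitting $\psi = \psi^+ - \psi^-$ into positive and negative parts (each still satisfying \eqref{condition g}), and to reduce to non-negative $\varphi$ by writing $\varphi = (\varphi + \|\varphi\|_\infty) - \|\varphi\|_\infty$, using that constants are in $\mathcal{B}_\gamma$ and that $\nu_s$, $\pi_s$ are positive linear functionals; linearity of the whole expression in both $\varphi$ and $\psi$ then recombines the four pieces. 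A second point requiring attention is that $\psi$ is merely measurable with $\psi^\pm_{s,\varepsilon}$ measurable and satisfying \eqref{condition g} — it is not assumed continuous or compactly supported as in Proposition \ref{Prop Rn limit1} — but this is fine because Proposition \ref{Prop Rn limit1} is applied not to $\psi$ itself but to the smooth, compactly supported, analytic-near-$0$ Fourier-side function $\widehat{\psi}^\pm_{s,\varepsilon}\widehat{\rho}_{\varepsilon^2}$, so the hypotheses there are met at the level of the auxiliary function. Collecting the matching $\limsup$ and $\liminf$ bounds and letting $\varepsilon \to 0$ gives \eqref{Asy-s-Posi}, and Theorem \ref{main theorem3} follows by noting that a directly Riemann integrable $y\mapsto e^{-sy}\psi(y)$ satisfies condition \eqref{condition g}.
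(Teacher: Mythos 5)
Your proposal is correct and follows essentially the same route as the paper: change of measure to reduce to $\mathbb{E}_{\mathbb{Q}_s^x}[(\varphi r_s^{-1})(X_n^x)\Psi_s(T_n^x-nl)]$, the $\psi^{\pm}_{s,\varepsilon}*\rho_{\varepsilon^2}$ sandwich from Lemma \ref{estimate u convo}, Fourier inversion plus Fubini, Proposition \ref{Prop Rn limit1} applied to the compactly supported analytic Fourier-side function, tail control for the lower bound via dominated convergence, and the $\varepsilon\to0$ limit through condition \eqref{condition g}. The only cosmetic difference is your reduction to non-negative $\varphi$ by adding $\|\varphi\|_\infty$, where the paper splits $\varphi=\varphi^+-\varphi^-$; both work.
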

%
Before proceeding with the proof of this theorem, let us give some
examples of functions satisfying condition \eqref{condition g}.
It is easy to see that 
\eqref{condition g} holds for increasing non-negative functions $\psi$ satisfying
$\int_{\mathbb{R}}e^{-sy}\psi(y)dy<+\infty,$
in particular, 
for the indicator function
$\psi(y)=\mathbbm{1}_{\{y \geq c\}}$, $y \in \mathbb R$, where $c\in \mathbb R$ is a fixed constant.
Another example for which \eqref{condition g} holds true is 
when $\psi$ is non-negative,
continuous and there exists $\varepsilon>0$ such that
\begin{align} \label{conditiong002}
\int_{\mathbb{R}}e^{-sy}
\psi_\varepsilon^+(y)
dy<+\infty, 
\end{align}
where the function $\psi_\varepsilon^+(y)  =  \sup_{y'\in\mathbb{B}_{\varepsilon}(y)} \psi(y')$ is assumed to be measurable.

\begin{proof}[\textit{Proof of Theorem \ref{main theorem2}}]
Without loss of generality, we assume that both
$\varphi$ and $\psi$ are non-negative (otherwise, we decompose the functions $\varphi=\varphi^+ - \varphi^-$
and $\psi = \psi^+ - \psi^-$).
Let $T_{n}^x=\log|G_n x|-nq$. 
Since $e^{n\Lambda^{*}(q)}=e^{sqn}/\kappa^{n}(s)$,
using the change of measure \eqref{basic equ1}, we have
\begin{align}
A_n(x,l) := \ &\sqrt{2\pi n}~\sigma_s e^{n \Lambda^*(q+l)} 
\frac{1}{r_{s}(x)}
\mathbb{E}\Big[\varphi(X_{n}^{x})\psi(\log |G_nx|-n(q + l))\Big] \nonumber\\
=  \ &
\sqrt{2\pi n}~\sigma_s e^{n s l}e^{n h_s(l)}
e^{sqn}\mathbb{E}_{\mathbb{Q}_{s}^{x}}
\left[ (\varphi r_{s}^{-1})(X_{n}^x) e^{ -s \log |G_nx| } \psi(T_{n}^x- n l) \right] \nonumber\\
=  \ &
\sqrt{2\pi n}~\sigma_se^{n h_s(l) }
\mathbb{E}_{\mathbb{Q}_{s}^{x}}
\left[(\varphi r_{s}^{-1})(X_{n}^x)e^{-s( T_{n}^x- n l )}\psi(T_{n}^x- n l)\right].  \nonumber
\end{align}
For brevity, set $\Phi_s(x)=\left(\varphi r_{s}^{-1}\right)(x),$ $x\in \mathcal{S}$, and
$\Psi_s(y)=e^{-sy}\psi(y),$ $y\in \mathbb R$. 
Then,
\begin{align} \label{Target An}
A_n(x,l) = 
\sqrt{2\pi n}~\sigma_s e^{n h_s(l)}
\mathbb{E}_{\mathbb{Q}_{s}^{x}}\left[\Phi_s(X_{n}^x) \Psi_s(T_{n}^x-nl)\right]. 
\end{align}
\textit{Upper bound.} 
We wish to write the expectation in \eqref{Target An} as 
an integral of the Fourier transform of $\Psi_s,$
which, however, may not belong to the space $L^{1}(\mathbb{R})$.
As in the proof of Theorem \ref{main theorem1} (see Section \ref{proof T21}),
we make use of the convolution technique to overcome this difficulty.
Applying Lemma \ref{estimate u convo} to $\Psi_s$, 
one has, for sufficiently small $\varepsilon > 0$,
\begin{align}  \label{Thm2 upper01}
A_n(x,l)
\leq & \ (1+ C_{\rho}(\varepsilon))
\sqrt{2\pi n}~\sigma_s e^{n h_s(l)}
\mathbb{E}_{\mathbb{Q}_{s}^{x}}\left[\Phi_s(X_{n}^x)
({\Psi}^+_{s,\varepsilon}\!\ast\!\rho_{\varepsilon^2})(T_{n}^x-nl)\right] \nonumber\\
 : =  &  \  B_n(x,l), 
\end{align}
where ${\Psi}^+_{s,\varepsilon}(y) = \sup_{y'\in\mathbb{B}_{\varepsilon}(y)} \Psi_s(y')$, $y \in \mathbb{R}$. 
Using the same arguments as for deducing \eqref{Fubini1}, we have
\begin{align}   \label{target upper Bn}
B_n(x,l)
= (1+ C_{\rho}(\varepsilon)) \frac{\sigma_s}{\sqrt{2\pi} } \sqrt{n} \ e^{n h_s(l)} \int_{\mathbb{R}}
e^{-itln}
R^{n}_{s,it} \Phi_s(x)\widehat{\Psi}^+_{s,\varepsilon}(t) \widehat{\rho}_{\varepsilon^2}(t)dt,
\end{align}
where
$R^{n}_{s,it}\Phi_s(x)=\mathbb{E}_{\mathbb{Q}_{s}^{x}}\left[e^{it T_{n}^x}\Phi_s(X_{n}^x)\right]$
and $\widehat{\Psi}^+_{s,\varepsilon}$ is the Fourier transform of $\Psi^+_{s,\varepsilon}$. 
Note that $\Phi_s$ is strictly positive and $\gamma$-H\"{o}lder continuous function on $\mathcal S$, 
and $\widehat{\Psi}^+_{s,\varepsilon} \widehat{\rho}_{\varepsilon^2}$ 
has a compact support in $\mathbb{R}$. 
Applying Proposition \ref{Prop Rn limit1}
with $\varphi = \Phi_s$ and $\psi=\widehat{\Psi}^+_{s,\varepsilon} \widehat{\rho}_{\varepsilon^2}$
(one can verify that the functions $\varphi$ and $\psi$ 
satisfy the required conditions in Proposition \ref{Prop Rn limit1}), we obtain 
\begin{align*}
\lim_{n\to\infty}  \sup_{x\in \mathcal{S}}  \sup_{|l|\leq l_n } B_n(x,l)
= (1+ C_{\rho}(\varepsilon)) \pi_{s}(\Phi_s) \widehat{\Psi}^+_{s,\varepsilon}(0) \widehat{\rho}_{\varepsilon^2}(0).
\end{align*}
Since 
$\widehat{\Psi}^+_{s,\varepsilon}(0)
=\int_{\mathbb{R}}\sup_{y'\in\mathbb{B}_{\varepsilon}(y)}e^{-sy'}\psi(y')dy$ and 
$\widehat{\rho}_{\varepsilon^2}(0) =1$, 
letting $\varepsilon$ go to $0$,
using the condition \eqref{condition g} 
and the fact that $C_{\rho}(\varepsilon) \to 0$ as $\varepsilon \to 0$, 
we get the upper bound: 
\begin{align} \label{Thm2 upper result}
\limsup_{n\to\infty}  \sup_{x\in \mathcal{S}} \sup_{|l|\leq l_n}
A_n(x,l)
\leq \pi_{s}(\Phi_s) \int_{\mathbb{R}} e^{-sy} \psi(y) dy.
\end{align}
\textit{Lower bound.} 
Denote ${\Psi}^-_{s,\varepsilon}(y) = \inf_{y'\in\mathbb{B}_{\varepsilon}(y)} \Psi_s(y')$. 
From \eqref{Target An}, using Lemma \ref{estimate u convo}, we get
\begin{align}  \label{Target Lowerbound An 1}
A_n(x,l) 
\geq  & \ 
\sqrt{2\pi n} \  \sigma_s e^{n h_s(l) }
\mathbb{E}_{\mathbb{Q}_{s}^{x}}
\left[ \Phi_s(X_{n}^x)
({\Psi}^-_{s,\varepsilon}\! \ast \! \rho_{\varepsilon^2})
(T_{n}^x-nl)\right]   \nonumber\\
& -  
\sqrt{2\pi n}  \   \sigma_s e^{n h_s(l) }
\int_{|y|\geq \varepsilon}
\mathbb{E}_{\mathbb{Q}_{s}^{x}}
\left[ \Phi_s(X_{n}^x)
{\Psi}^-_{s,\varepsilon}
(T_{n}^x-nl-y)\right]\rho_{\varepsilon^2}(y)dy   \nonumber\\
:= &  \ 
B_n^-(x,l)- C_n^-(x,l). 
\end{align}
For $B_n^-(x,l)$, 
we proceed as for \eqref{Thm2 upper01} and \eqref{target upper Bn}, with ${\Psi}^+_{s,\varepsilon}$
replaced by ${\Psi}^-_{s,\varepsilon}$. 
Using 
Proposition \ref{Prop Rn limit1},
with $\varphi=\Phi_s$ and $\psi=\widehat \Psi^-_{s,\varepsilon} \widehat \rho_{\varepsilon^2},$ 
and the fact that $\widehat \rho_{\varepsilon^2}(0)=1$ and $\widehat{\Psi}^-_{s,\varepsilon}(0)
=\int_{\mathbb{R}}\inf_{y'\in\mathbb{B}_{\varepsilon}(y)}e^{-sy'}\psi(y')dy$, 
in an analogous way as in \eqref{Thm2 upper result}, 
we obtain that 
\begin{align}  \label{Target lower bound Bn}
&  \  \lim_{n\to\infty}\sup_{x\in \mathcal{S}}\sup_{|l|\leq l_n }  B_n^-(x,l)
\nonumber \\
=  & \  \pi_{s}(r_{s}^{-1}) \int_{\mathbb{R}}\inf_{y\in\mathbb{B}_{\varepsilon}(z)}e^{-sy}\psi(y)dz
\to  \pi_{s}(r_{s}^{-1}) \int_{\mathbb{R}}e^{-sy}\psi(y)dy, \  \mbox{as} \ \varepsilon \to 0,
\end{align}
where the last convergence is due to the condition \eqref{condition g}.  
For $C_n^-(x,l)$, 
noting that $\Psi^-_{s,\varepsilon} \leq \Psi_s$,
applying Lemma \ref{estimate u convo} to $\Psi_s$ we get
$\Psi^-_{s,\varepsilon} 
\leq (1+ C_{\rho}(\varepsilon))\widehat{\Psi}^+_{s,\varepsilon} \widehat{\rho}_{\varepsilon^2}$. 
Similarly to \eqref{target upper Bn}, we show that
\begin{align*}
C_n^-(x,l) 
&\leq  (1+ C_{\rho}(\varepsilon))
\sqrt{\frac{n}{2 \pi} }~ \sigma_s e^{n h_s(l) }
\\
&  \times\int_{|y|\geq \varepsilon}
\left(  \int_{\mathbb{R}} e^{-it(ln+y)} R^{n}_{s,it}(\Phi_s)(x)
\widehat {\Psi}^+_{s,\varepsilon}(t) \widehat\rho_{\varepsilon^{2}}(t) dt \right)
\rho_{\varepsilon^2}(y)dy.
\end{align*}
From Lemma \ref{lemmaCR001}, 
for any fixed $y \in \mathbb{R}$, 
it holds that $e^{nh_s(l)-nh_s(l+\frac{y}{n})} \to 1$, uniformly in $|l| \leq l_n$ as $n \to \infty$.
Applying Proposition \ref{Prop Rn limit1} 
with $\varphi=\Phi_s$ and $\psi=\widehat \Psi^+_{s,\varepsilon}\widehat\rho_{\varepsilon^{2}}$,
it follows, from the Lebesgue dominated convergence theorem, that 
\begin{align*}
&\limsup_{n\to\infty}\sup_{x\in \mathcal{S}}\sup_{|l|\leq l_n } C_n^-(x,l) \\
&\quad \leq    (1+ C_{\rho}(\varepsilon)) 
\pi_{s}(\Phi_s) \widehat \Psi^+_{s,\varepsilon}(0) \widehat\rho_{\varepsilon^{2}}(0)
\int_{|y| \geq \varepsilon} \rho_{\varepsilon^2}(y)dy  
\to  0
\end{align*}
as $\varepsilon \to 0$.
Combining this with \eqref{Target Lowerbound An 1}-\eqref{Target lower bound Bn}, we get the lower bound 
\begin{align} \label{Target lower bound An 02}
\liminf_{n\to\infty}\sup_{x\in \mathcal{S}}\sup_{|l|\leq l_n }
A_n(x,l)
\geq \pi_{s}(\Phi_s) \int_{\mathbb{R}}e^{-sy}\psi(y)dy. 
\end{align}
Putting together \eqref{Thm2 upper result} and \eqref{Target lower bound An 02},
and noting that $\pi_{s}(\Phi_s) = \pi_{s}(\varphi r_s^{-1}) = \frac{\nu_s(\varphi)}{\nu_s(r_s)}$,
the result follows. 
\end{proof}

In the sequel, we deduce Theorem \ref{main theorem3} from Theorem \ref{main theorem2} 
using approximation techniques.

\begin{proof}[\textit{Proof of Theorem \ref{main theorem3}}]
Without loss of generality, we assume that $\varphi\geq 0$ and $\psi \geq 0$.
Let $\Psi_s(y)=e^{-sy}\psi(y)$, $y \in \mathbb{R}$. 
We construct two step functions as follows:
for any $\eta \in(0,1)$, $m\in\mathbb{Z}$ and $y\in [m\eta, (m+1)\eta)$, set
\begin{align*}
\Psi^+_{s,\eta} (y)=\sup_{y\in [m\eta, (m+1)\eta)} \Psi_s(y) \quad \mbox{and} \quad
\Psi^-_{s,\eta} (y)=\inf_{y\in [m\eta, (m+1)\eta)} \Psi_s(y).
\end{align*}
By the definition of the direct Riemann integrability, the following two limits exist and are equal:
\begin{align}  \label{DiretRiemStep 01}
\lim_{\eta\to 0^+}\int_{\mathbb{R}} \Psi^+_{s,\eta}(y)dy
=\lim_{\eta\to 0^+}\int_{\mathbb{R}} \Psi^-_{s,\eta}(y)dy.
\end{align}
Since $\Psi_s$ is directly Riemann integrable, 
we have $M:= \sup_{y \in \mathbb{R}} \Psi_s(y)<+\infty$. 
Let $\varepsilon \in (0, M\eta)$ be fixed. 
Denote $I_{m}=[(m-1) \eta, m\eta)$, 
$I_{m}^-=\big(m\eta-\frac{\varepsilon}{M 4^{|m|}}, m\eta \big)$,
and $I_{m}^+ = \big[m\eta,  m\eta + \frac{\varepsilon}{M 4^{|m|}} \big)$, $m \in  \mathbb{Z}$.
Set $k_m^+:= M 4^{|m|} \frac{ \Psi_{s,\eta}^{+}(m\eta)
-\Psi_{s,\eta}^{+}((m-1)\eta)} { \varepsilon }$, $m \in \mathbb{Z}$. 
For the step function $\Psi_{s,\eta}^{+}$, 
in the neighborhood of every possible discontinuous point $m \eta$, $m \in \mathbb{Z}$, 
if $\Psi_{s,\eta}^{+}(m\eta) \geq \Psi_{s,\eta}^{+}((m-1)\eta)$,  
then for any $y \in I_m\cup I_{m+1}$, $m \in \mathbb{Z}$, we define 
\begin{equation*}
\Psi_{s,\eta,\varepsilon}^{+}(y)= 
\begin{cases}
\Psi_{s,\eta}^{+}((m-1)\eta),  
& y \in I_m \setminus I_m^-   \\
\Psi_{s,\eta}^{+}((m-1)\eta) 
+ k_m^+ \left(y - m\eta + \frac{\varepsilon}{M4^{|m|}} \right), 
& y \in I_{m}^- \\
\Psi_{s,\eta}^{+}(m\eta), 
& y \in I_{m+1}. 
\end{cases}
\end{equation*}
If $\Psi_{s,\eta}^{+}(m\eta) < \Psi_{s,\eta}^{+}((m-1)\eta)$,
then we define
\begin{equation*}
\Psi_{s,\eta,\varepsilon}^{+}(y)= 
\begin{cases}
\Psi_{s,\eta}^{+}((m-1)\eta), 
& y \in I_{m} \\
\Psi_{s,\eta}^{+}((m-1)\eta) 
+ k_m^+ (y - m\eta  ), 
& y \in I_{m}^+  \\
\Psi_{s,\eta}^{+}(m\eta), 
& y \in I_{m+1}\setminus I_{m}^+.
\end{cases}
\end{equation*}
From this construction, the non-negative continuous function $\Psi_{s,\eta,\varepsilon}^{+}$
satisfies  $\Psi^+_{s,\eta}  \leq \Psi_{s,\eta,\varepsilon}^{+}$
and $\int_{\mathbb{R}} [\Psi_{s,\eta,\varepsilon}^{+}(y) - \Psi^+_{s,\eta}(y)] dy < \varepsilon$.
Similarly, for the step function $\Psi_{s,\eta}^{-}$, 
one can construct a non-negative continuous function $\Psi_{s,\eta,\varepsilon}^{-}$
which satisfies $ \Psi_{s,\eta,\varepsilon}^{-} \leq \Psi^-_{s,\eta}$ and 
$\int_{\mathbb{R}} [\Psi^-_{s,\eta}(y) - \Psi_{s,\eta,\varepsilon}^{-}(y)] dy < \varepsilon$.
Consequently, in view of \eqref{DiretRiemStep 01}, we obtain that,  for $\eta$ small enough,
\begin{align}\label{estimate approximation}
\int_{\mathbb{R}}|\Psi_{s,\eta,\varepsilon}^{+}(y)- \Psi_{s,\eta,\varepsilon}^{-}(y)|dy< 3\varepsilon.
\end{align}
For brevity, set $c_{s,l,n}=\sqrt{2\pi n}~\sigma_s e^{n \Lambda^*(q+l)}$
and $T_{n,l}^x = \log |G_n x|-n(q+l)$.
Recalling that $\Psi_s(y)=e^{-sy}\psi(y)$, we write 
\begin{align} \label{mainresult3 estimate}
&\  \left|   
c_{s,l,n} \mathbb{E}\left[\varphi(X_{n}^{x}) \psi(T_{n,l}^x ) \right]
 - \bar r_{s}(x) \nu_s(\varphi) \int_{\mathbb{R}}\Psi_s(y)dy\right| \nonumber\\
\leq  & \
\left| 
c_{s,l,n} \mathbb{E}\left\{\varphi(X_{n}^{x})e^{s T_{n,l}^x }
\left[\Psi_s(T_{n,l}^x )-\Psi_{s,\eta,\varepsilon}^{+}(T_{n,l}^x )\right]\right\}
\right|  \nonumber\\  
& \ +
\left| c_{s,l,n} 
\mathbb{E}
\left[\varphi(X_{n}^{x})e^{s T_{n,l}^x }\Psi_{s,\eta,\varepsilon}^{+}(T_{n,l}^x )\right]
 - \bar r_{s}(x) \nu_s(\varphi)  \int_{\mathbb{R}}\Psi_{s,\eta,\varepsilon}^{+}(y)dy\right|  \nonumber\\
&\  +
\left|{r_{s}(x)}\pi_{s}(\varphi r_{s}^{-1})\int_{\mathbb{R}}\Psi_{s,\eta,\varepsilon}^{+}(y)dy
 - \bar r_{s}(x) \nu_s(\varphi)  \int_{\mathbb{R}}\Psi_s(y)dy\right|  \nonumber\\
= & \   J_1 + J_2 + J_3. 
\end{align}
To control $J_2$, we shall verify the conditions of Theorem \ref{main theorem2}.
Noting that the function $y\mapsto e^{sy}\Psi_{s,\eta,\varepsilon}^{+}(y)$ is non-negative and continuous,
it remains to check the condition \eqref{conditiong002}.
By the construction of $\Psi_{s,\eta,\varepsilon}^{+}$
one can verify that there exists a constant 
$\varepsilon_1 \in (0, \min\{M \eta, \eta/3\})$ such that 
\begin{align} \label{Upper Bound Direct Riem}
\int_{\mathbb{R}}\sup_{y'\in\mathbb{B}_{\varepsilon_1}(y)} \Psi_{s,\eta,\varepsilon}^{+}(y') dy
\leq & \  2 \eta \sum_{m\in \mathbb{Z}} \sup_{y\in [m\eta, (m+1)\eta)}\Psi_{s,\eta}^{+}(y)  \nonumber\\
= & \  2 \eta \sum_{m\in \mathbb{Z}} \sup_{y\in [m\eta, (m+1)\eta)}\Psi_{s}(y) <+\infty, 
\end{align}
where the series is finite since the function $\Psi_s$ is directly Riemann integrable.
Hence, applying Theorem \ref{main theorem2} to 
$y\mapsto e^{sy}\Psi_{s,\eta,\varepsilon}^{+}(y)$,
we get
\begin{align} \label{mainresult3 estimate I2}
\lim_{n\to \infty}
\sup_{x\in \mathcal{S}}\sup_{|l|\leq l_n }
J_2 = 0. 
\end{align}
For $J_3(x)$, recall that $\Psi_{s,\eta,\varepsilon}^{-}
\leq \Psi_s \leq \Psi_{s,\eta,\varepsilon}^{+}$. 
Using \eqref{estimate approximation} and the fact that $r_s$ is uniformly bounded on $\mathcal{S}$,
we get that there exists a constant $C_s>0$ such that
\begin{align}  \label{mainresult3 estimate I3}
\sup_{x \in \mathcal{S}} J_3 \leq C_s \varepsilon.  
\end{align}
For $J_1$, 
note that 
$e^{sy}\Psi_{s,\eta,\varepsilon}^{-}(y)
\leq e^{sy}\Psi_s(y)\leq e^{sy}\Psi_{s,\eta,\varepsilon}^{+}(y)$, $y\in\mathbb{R}$. 
Combining this with the positivity of $\varphi$, it holds that 
\begin{align}
|J_1 &|  
\leq 
\left| c_{s,l,n}
\mathbb{E}
\left\{\varphi(X_{n}^{x})e^{s T_{n,l}^x}
\left[\Psi_{s,\eta,\varepsilon}^{+}(T_{n,l}^x)
-\Psi_{s,\eta,\varepsilon}^{-}(T_{n,l}^x)\right]\right\}\right|  \nonumber\\
\leq  & \ 
\left| c_{s,l,n}  
\mathbb{E}
\left[\varphi(X_{n}^{x})e^{s T_{n,l}^x}\Psi_{s,\eta,\varepsilon}^{+}(T_{n,l}^x)\right]
 - \bar r_{s}(x) \nu_s(\varphi) \int_{\mathbb{R}}\Psi_{s,\eta,\varepsilon}^{+}(y)dy\right|  \nonumber\\
& \  +
\left| c_{s,l,n}  
\mathbb{E}
\left[\varphi(X_{n}^{x})e^{s T_{n,l}^x}\Psi_{s,\eta,\varepsilon}^{-}(T_{n,l}^x)\right]
 - \bar r_{s}(x) \nu_s(\varphi) \int_{\mathbb{R}}\Psi_{s,\eta,\varepsilon}^{-}(y)dy\right|  \nonumber\\
& \ +
\left| \bar r_{s}(x) \nu_s(\varphi)  \int_{\mathbb{R}}\Psi_{s,\eta,\varepsilon}^{+}(y)dy
 - \bar r_{s}(x) \nu_s(\varphi)  \int_{\mathbb{R}}\Psi_{s,\eta,\varepsilon}^{-}(y)dy\right|  \nonumber\\
= & \  J_{11} + J_{12} + J_{13}. \nonumber
\end{align}
Using \eqref{mainresult3 estimate I2}, it holds that, as $n \to \infty$, 
$J_{11} \to 0$, uniformly in $x \in \mathcal{S}$ and $|l| \leq l_n$. 
For $J_{12}$, 
note that the function $y\mapsto e^{sy}\Psi_{s,\eta,\varepsilon}^{-}(y)$ is non-negative and continuous.
By the construction of $\Psi_{s,\eta,\varepsilon}^{-}$, 
similarly to \eqref{Upper Bound Direct Riem}, one can verify that 
there exists $\varepsilon_2>0$ such that 
$
\int_{\mathbb{R}}\sup_{y'\in\mathbb{B}_{\varepsilon_2}(y)} \Psi_{s,\eta,\varepsilon}^{-}(y') dy<+\infty.
$
We deduce from Theorem \ref{main theorem2} that 
$J_{12} \to 0$  as $n \to \infty$, 
uniformly in $x \in \mathcal{S}$ and $|l| \leq l_n$. 
For $J_{13}$, we use \eqref{estimate approximation} to get that
$J_{13} \leq C_s \varepsilon$. 
Consequently, we obtain that, as $n \to \infty$,   
$J_1 \leq C_s \varepsilon$, 
uniformly in $x \in \mathcal{S}$ and $|l| \leq l_n$. 
This, together with  \eqref{mainresult3 estimate}, 
\eqref{mainresult3 estimate I2}-\eqref{mainresult3 estimate I3}, implies that
\begin{align*}
\lim_{n\to \infty} \sup_{x\in \mathcal{S}} \sup_{|l|\leq l_n } 
\Big|   
c_{s,l,n} \mathbb{E}\left[\varphi(X_{n}^{x}) \psi(T_{n,l}^x ) \right]
 - \bar r_{s}(x) \nu_s(\varphi)  \int_{\mathbb{R}}\Psi_s(y)dy
 \Big|
  \leq  C_s \varepsilon.
\end{align*}
Since $\varepsilon>0$ is arbitrary, we conclude the proof of Theorem \ref{main theorem3}. 
\end{proof}

\begin{proof}[\textit{Proof of Theorem \ref{Thm-Neg-s-Target}}]
Following the proof of Theorem \ref{main theorem2}, one can verify that the asymptotic \eqref{Asy-s-Posi}
holds true for $s<0$ small enough and for $\psi$ satisfying condition \eqref{condition g}. 
The passage to a directly Riemann integrable function $\psi$ can be done by using the same approximation
techniques as in the proof of Theorem \ref{main theorem3}. 
\end{proof}

\section{Proof of Theorems \ref{Thm-LDP-Norm}, \ref{Thm-LDP-Norm-Negs} and \ref{Theorem local LD001}}

\begin{proof}[\textit{Proof of Theorems \ref{Thm-LDP-Norm} and \ref{Thm-LDP-Norm-Negs}}]
We first give a proof of  Theorem \ref{Thm-LDP-Norm}. 
Since $\log |G_nx|\leq\log \|G_n\|$ 
and the function $\bar r_s$ is strictly positive and uniformly bounded on $\mathcal{S}$,
applying Theorem \ref{main theorem1} we get the lower bound: 
\begin{align} \label{NormLow a}
\liminf_{n\to\infty}
\inf_{ |l| \leq l_n}  \frac{1}{n} \log \mathbb{P}(\log\|G_n\| \geq n(q+l) )
\geq - \Lambda^*(q). 
\end{align}
For the upper bound, since all matrix norms are equivalent, 
there exists a positive constant $C$ which does not depend on the product $G_n$ such that 
$\log \| G_n \| \leq \max_{ 1\leq i \leq d} \log |G_n e_i| + C,$
where $(e_i)_{1 \leq i \leq d}$ is the canonical orthonormal basis in $\mathbb{R}^d$. 
From this inequality, we deduce that
\begin{align*}
\mathbb{P}(\log\|G_n\| \geq n(q+l) ) 
 \leq  \sum_{i=1}^d \mathbb{P} \Big( \log |G_n e_i| \geq n \big( q+l - C/n \big)  \Big). 
\end{align*}
Using Lemma \ref{lemmaCR001}, we see that
there exists a constant $C_s>0$ such that 
$e^{n [\Lambda^*(q+l-C/n) - \Lambda^*(q+l)]} \leq C_s$, uniformly in $|l| \leq l_n$ and $n \geq 1$. 
Again by Theorem \ref{main theorem1}, 
we obtain the upper bound: 
\begin{align*}
\limsup_{n\to\infty}
\sup_{|l|\leq l_n} 
\frac{1}{n} \log \mathbb{P}(\log\|G_n\| \geq n(q+l) )
\leq - \Lambda^*(q). 
\end{align*}
This, together with \eqref{NormLow a}, proves Theorem \ref{Thm-LDP-Norm}. 
Using Theorem \ref{Thm-Neg-s}, 
the proof of Theorem \ref{Thm-LDP-Norm-Negs} can be carried out in the same way.  
\end{proof}


\begin{proof}[\textit{Proof of Theorem \ref{Theorem local LD001}}]
Without loss of generality, we assume that the function $\varphi$ is non-negative.  
From Theorem \ref{main theorem3}, 
we deduce that there exists a
sequence $(r_n)_{n\geq1}$, determined by the matrix law $\mu$
such that $r_n\to 0$ as $n \to \infty$ and, 
uniformly in  $x \in \mathcal{S}$, $|l| \leq l_n$ and
$0 \leq \Delta \leq o(n)$, 
it holds that
\begin{align}\label{LDDelta001}
&\mathbb{E} \Big[ \varphi(X_{n}^{x}) \mathbbm{1}_{ \{ \log|G_n x|  \geq n(q+l) + a + \Delta \} } \Big] \nonumber \\
&\qquad \qquad \qquad = \frac{ \bar r_{s}(x) }{s\sigma_s\sqrt{2\pi n}}
 e^{ -n \Lambda^*(q + l + \frac{a + \Delta}{n}) } \Big[  \nu_s(\varphi) +  r_n  \Big].
\end{align}
Taking the difference of \eqref{LDDelta001} with $\Delta=0$  and with $\Delta>0$,  we get, as $n \to \infty$, 
\begin{align*}
&\mathbb{E} \Big[ \varphi(X_{n}^{x}) 
   \mathbbm{1}_{ \{ \log|G_n x| \in n(q+l) + [a,a+\Delta)  \} } \Big]  \nonumber \\
&\qquad \qquad \qquad = I_{\Delta}(n)  \frac{ \bar r_{s}(x) }{s\sigma_s\sqrt{2\pi n}} e^{ -n \Lambda^*(q + l) }
   \Big[  \nu_s(\varphi) +  r_n  \Big],
\end{align*}
where
\begin{align*}
I_{\Delta}(n): = 
 e^{n \Lambda^*(q + l) - n \Lambda^*(q + l + \frac{a}{n}) }  
- e^{n \Lambda^*(q + l) - n \Lambda^*(q + l+ \frac{ a + \Delta }{n} )}. 
\end{align*}
An elementary analysis using Lemma \ref{lemmaCR001} shows that
$$I_{\Delta}(n) \sim e^{-sa}(1 - e^{-s\Delta}),$$ 
uniformly in $|l| \leq l_n$ and $\Delta_n \leq \Delta \leq o(n)$,
for any $(\Delta_n)_{n\geq1}$ converging to $0$ 
slowly enough ($\Delta_n^{-1} = o( r_n^{-1} )$).
%
%
%
%
This concludes the proof of Theorem \ref{Theorem local LD001}.
\end{proof}

\end{document}